\newcommand{\dil}{t\gen t+x\gen x+y\gen y}
\newcommand{\vf}{\mathbf{v}}
\newcommand{\gen}[1]{\partial_{#1}}
\newcommand{\curl}[1]{ \left\{#1\right\} }
\newcommand{\lie}{\mathfrak g}
\DeclareMathOperator{\Sl}{sl}
\DeclareMathOperator{\e}{e}
\DeclareMathOperator{\E}{E}
\DeclareMathOperator{\Ort}{SO}
\DeclareMathOperator{\PP}{P}
\DeclareMathOperator{\ort}{so}
\DeclareMathOperator{\conf}{Conf}
\DeclareMathOperator{\GL}{GL}
\DeclareMathOperator{\SL}{SL}
\DeclareMathOperator{\Heis}{h}
\DeclareMathOperator{\rank}{rank}
\DeclareMathOperator{\Span}{span}
\DeclareMathOperator{\Diff}{Diff}
\DeclareMathOperator{\Ad}{Ad}
\DeclareMathOperator{\Nor}{Nor}
\DeclareMathOperator{\nor}{nor}
\DeclareMathOperator{\arctanh}{arctanh}
\DeclareMathOperator{\proj}{proj}
\newcommand{\pr}[1]{{\rm pr}^{(#1)}}
\theoremstyle{plain}
\newtheorem{theorem}{Theorem}[section]
\newtheorem{proposition}[theorem]{Proposition}
\theoremstyle{definition}
\newtheorem{definition}[theorem]{Definition}	
\newtheorem{remark}[theorem]{Remark}
\newtheorem{example}[theorem]{Example}
\begin{document}

\pagenumbering{arabic}
\clearpage
\thispagestyle{empty}

\title{Lie symmetry group methods for differential equations}

\author[1]{F.~G\"ung\"or\thanks{gungorf@itu.edu.tr}}
\affil[1]{Department of Mathematics, Faculty of Science and
Letters, Istanbul Technical University, 34469 Istanbul, Turkey}

\date{\today}
\maketitle

\begin{abstract}
Fundamentals on Lie group methods and applications to differential equations are surveyed. Numerous examples are included to elucidate their extensive applicability for analytically solving both ordinary and partial differential equations.
\end{abstract}

\newpage

\tableofcontents
\newpage

\section*{Introduction}

This paper reviews a treatment of differential equations using methods from Lie group theory.
Symmetry group methods are amongst the most powerful universal tools for the study of differential equations. There has been rapid progress on these methods over the last few decades. Methods and algorithms for classifying  subalgebras of Lie algebras, new results on the structure and classification of abstract finite and infinite dimensional Lie algebras \cite{SnobleWinternitz2014} and methods for solving group classification problems for  differential equations greatly facilitated to systematically obtain exact analytic solutions by quadratures to ordinary differential equations and  group-invariant solutions to partial differential equations and to identify equivalent equations. The application of Lie groups to differential equations has a long history. In the second half of the nineteenth century, Norwegian mathematician  Sophus Lie (1842-1899) introduced continuous groups of transformations \cite{Lie1881, LieEngel1891} to give a unified and systematic theory for the study of properties of solutions of differential equations just like Evariste Galois's (1811-1831) dream to solve  algebraic equations by radicals, which led to the theory of Galois.  In his memoir of 1831, he considered the group of admissible permutations.
The theory of Lie groups and algebras originated precisely in  the context of differential equations. Over the years, these transformations evolved into the modern theory of abstract Lie groups  and algebras.

As far as differential equations are concerned, the main observation was that much of the known solutions methods  were actually specific cases of a general solution (general or particular solution) method based on the invariance of a system of differential equations under a continuous  group of transformations (called symmetry group of the system).

Symmetry group of a partial differential equation (PDE) can be used to reduce the number of independent variables, to transform known simple solutions to new solutions.  For an ordinary differential equation (ODE), even reduction in order can be made. Under some special structure of the symmetry group, reduction can even go all the way down to an algebraic equation, from which general solution can be obtained.

The computation of the symmetry group of a system of differential equations can be computationally complicated, but nevertheless completely algorithmic. Computer algebra systems can automate most of the steps of the Lie symmetry  algorithm.

It should be emphasized that applications of Lie group methods using classical group of point transformations are not only restricted  to differential equations. They, and when applicable, their generalizations to higher order symmetries can also be carried over to conservation laws, Hamiltonian systems, difference and  differential-difference equations,  integro-differential equations,  delay differential equations, fractional differential equations.

Basic ideas, definitions, theorems and results needed to be able to apply Lie group methods for solving differential equations are presented. Many examples with mathematical and physical  applications
are considered to illustrate applications  to integration of ODEs by the method of reduction of order, construction of group-invariant solutions to PDEs, identification of equivalent equations based on the existence of isomorphic symmetry groups, generating new solutions from known ones, group-classification problem and construction of invariant differential equations.

There exists a vast literature on the methods and applications of Lie groups and algebras to differential equations and other fields of mathematics and physics. For more detailed treatments of the subject, the readers are referred to the books
\cite{Olver1993, Ovsyannikov1982, Ibragimov1985, BlumanCheviakovAnco2010, Hydon2000, Olver1995,  Stephani1989, OvsyannikovIbragimov2013, Hill1992} and the handbooks \cite{Ibragimov1994a, Ibragimov1994b, Ibragimov1995}. A basic knowledge of Lie groups and algebras will be assumed. The reader is directed to, for example, a recent book  \cite{SnobleWinternitz2014} for a good account of basic definitions,  theorems and applications of Lie algebras.

\section{Vector fields and integral curves}
We begin with a brief review of some essential objects that will be employed throughout.
Let $M$ be a differentiable manifold of dimension $n$. A curve $\gamma$ at a point $x$ of $M$ is a differentiable map $\gamma: I\to M$, where $I$ is a subinterval of $\mathbb{R}$, such that $\gamma(0)=x$, $0\in I$.

A vector field $\mathbf{v}$ of the manifold $M$ is a $C^{\infty}$-section of $TM$, in other words a $C^{\infty}$ mapping from $M$ to $TM$ that assigns to each point $x$ of M a  vector in $T_xM$. In the local coordinate system $x=(x_1,\ldots,x_n)\in M$, $\mathbf{v}$ can be expressed as
\begin{equation}\label{vf}
  \mathbf{v}\vert_{x}=\sum_{i=1}^n \xi_i(x)\gen{x_i},
\end{equation}
where $\xi_i(x)\in C^{\infty}(M)$, $i=1,\ldots, n$.

An integral curve of the vector field at the point $x$ is the curve $\gamma$  at $x$ whose tangent vector $\dot{\gamma}(t)$ coincides with $\mathbf{v}$ at the point $x=\gamma(t)$ such that $\dot{\gamma}(t)=\mathbf{v}\vert_{\gamma(t)}$ for each $t\in I$. In the local representation of the curve $\gamma$ it amounts to saying that the curve satisfies an autonomous  system of first order ordinary differential equations
\begin{equation}\label{comp-sys}
\frac{d\gamma_i(t)}{dt}=\xi_i(\gamma(t)),  \quad i=1,\ldots, n.
\end{equation}
The existence and uniqueness theorem for systems of ODES ensures that there is a unique solution to the system with the initial data $\gamma(0)=x_0$ (the Cauchy problem). This gives rise to the existence of a unique maximal integral curve $\gamma(t)$ passing through the point $x_0=\gamma(0)\in M$. We call such a maximal integral curve the flow of $\mathbf{v}$ at $x=\gamma(t)$ and denote $\Phi(t,x)$ with the basic properties
\begin{equation}\label{propert}
  \Phi(0,x)=x,  \quad \Phi(s,\Phi(t,x))=\Phi(t+s,x),  \quad \frac{d}{dt}\Phi(t,x)=\mathbf{v}\big\vert_{\Phi(t,x)}, \quad x\in M,
\end{equation}
for all sufficiently small $t,s\in \mathbb{R}$. A more suggestive notation for the flow is $\Phi(t,x)=\exp{(t \mathbf{v})}x$. The reason is simply that it satisfies the ordinary exponential rules. The second property implies that $\Phi(-t,x)=\Phi^{-1}(t,x)$ or $\exp{(t \mathbf{v})}^{-1}x=\exp{(-t \mathbf{v})}x$. One can infinitesimally express the flow
\begin{equation}\label{flow}
  \exp{(t \mathbf{v})}x=x+t \mathbf{v}\vert_x+\mathcal{O}(t^2).
\end{equation}
The flow $\exp{(t \mathbf{v})}x$ generated by the vector field $\mathbf{v}$ is sometimes called a one-parameter group of transformations as it arises as the action of the Lie group $\mathbb{R}$ on the manifold $M$.

Conversely, given a flow with the first two properties of \eqref{propert}, we can reconstruct its generating vector field $\mathbf{v}$ by differentiating the flow:
$$\mathbf{v}\vert_x=\frac{d}{dt}\exp{(t \mathbf{v})}x\big\vert_{t=0},  \quad x\in M.$$ The inverse process of constructing the flow is usually called exponentiation (or integration) of $\mathbf{v}$.

Rectification of a vector field $\mathbf{v}$ in a neighborhood of a regular point (a point $x$ at which $\mathbf{v}\vert_x$ does not vanish) is always possible.
\begin{theorem}
If $x_0$ is a regular point of $\mathbf{v}$, then there exist local rectifying (or straightening out)  coordinates $y=(y_1,\ldots,y_n)$ near $x_0$ such that $y=\gen {y_1}$ generates the translational flow $\exp{(t \mathbf{v})}y=(y_1+t,y_2,\ldots,y_n)$.
\end{theorem}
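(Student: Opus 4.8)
The plan is to prove this classical \emph{flow box} (or straightening out) theorem by building the rectifying coordinates explicitly from the flow of $\mathbf{v}$ together with a transversal hypersurface, and then certifying that they are genuine coordinates via the inverse function theorem. First I would fix arbitrary local coordinates $x=(x_1,\ldots,x_n)$ centred at $x_0$, so that $x_0$ corresponds to the origin. Since $x_0$ is a regular point, $\mathbf{v}\vert_{x_0}\neq 0$, so after a constant linear change of these coordinates I may assume without loss of generality that $\mathbf{v}\vert_{x_0}=\gen{x_1}$, i.e. $(\xi_1(x_0),\ldots,\xi_n(x_0))=(1,0,\ldots,0)$. Let $S=\{x\colon x_1=0\}$ be the coordinate hyperplane through the origin; near $x_0$ it is a hypersurface transversal to $\mathbf{v}$, parametrised by $(x_2,\ldots,x_n)$.

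Next I would introduce the map
\begin{equation*}
  \Psi(t,x_2,\ldots,x_n)=\exp(t\mathbf{v})(0,x_2,\ldots,x_n)=\Phi\bigl(t,(0,x_2,\ldots,x_n)\bigr),
\end{equation*}
which, by the existence, uniqueness and smooth dependence on initial data for the autonomous system \eqref{comp-sys}, is a well-defined $C^\infty$ map on a neighbourhood of the origin in $\mathbb{R}^n$. I then compute its Jacobian at the origin: differentiating in $t$ at $t=0$ gives $\mathbf{v}\vert_{x_0}=\gen{x_1}$ by the third identity in \eqref{propert}, while differentiating in $x_j$ for $j\geq 2$ at $t=0$ gives $\gen{x_j}$, because $\Psi(0,x_2,\ldots,x_n)=(0,x_2,\ldots,x_n)$. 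Hence $D\Psi(0)$ is the identity matrix, in particular invertible, so by the inverse function theorem $\Psi$ restricts to a diffeomorphism from a neighbourhood of the origin onto a neighbourhood of $x_0$. I define the new coordinates by $y=(y_1,\ldots,y_n):=\Psi^{-1}$, so that $y_1$ plays the role of the flow time $t$ and $(y_2,\ldots,y_n)$ that of the transversal parameters.

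Finally I would verify that in the $y$-coordinates the vector field becomes $\gen{y_1}$. The point with coordinates $(y_1,\ldots,y_n)$ is $\Psi(y_1,y_2,\ldots,y_n)=\exp(y_1\mathbf{v})(0,y_2,\ldots,y_n)$; applying $\exp(s\mathbf{v})$ and using the group law in \eqref{propert} gives $\exp\bigl((s+y_1)\mathbf{v}\bigr)(0,y_2,\ldots,y_n)=\Psi(y_1+s,y_2,\ldots,y_n)$, which is precisely the point with $y$-coordinates $(y_1+s,y_2,\ldots,y_n)$. Thus in these coordinates $\exp(t\mathbf{v})y=(y_1+t,y_2,\ldots,y_n)$, and differentiating at $t=0$ yields $\mathbf{v}=\gen{y_1}$, as claimed.

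Most of the argument is routine bookkeeping; the one point that genuinely requires care — and the main obstacle — is the appeal to \emph{smooth} dependence of the flow on its initial conditions, so that $\Psi$ is $C^\infty$ rather than merely continuous, together with the need to shrink the domains repeatedly (first to where the flow $\Phi$ exists, then to where $\Psi$ is invertible) so that everything is defined on a common neighbourhood of $x_0$. Once that is in place, the Jacobian computation and the inverse function theorem finish the proof.
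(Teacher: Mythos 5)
Your proof is correct and complete: it is the standard flow-box construction (flow out a transversal hyperplane, check the Jacobian is the identity at the base point, invoke the inverse function theorem, and use the group law of the flow to read off the translational action). The paper states this theorem without proof, so there is nothing to compare against; your argument, including the explicit appeal to smooth dependence on initial data and the verification that $\mathbf{v}=\gen{y_1}$ in the new coordinates, is exactly the expected one.
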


Let $\mathbf{v}$ be a vector field in the local coordinates
\begin{equation}\label{vf-2}
 \mathbf{v}=\sum_{i=1}^n \xi_i(x)\gen {x_i}.
\end{equation}
Application of a vector field $\mathbf{v}$ to a smooth function $f:M \to \mathbb{R}$, also referred to the Lie derivative of $f$ with respect to the vector field,   determines the infinitesimal rate of change  in $f$ as the parameter $t$ of the induced flow  $\exp{(t \mathbf{v})x}$ of $\mathbf{v}$ varies:
\begin{equation}\label{action-on-func}
 \mathbf{v}(f(x))=\sum_{i=1}^n \xi_i(x)\frac{\partial f}{\partial x_i}=\frac{d}{dt}\Big\vert_{t=0}f(\exp{(t \mathbf{v})x}).
\end{equation}
It is observed that vector fields act on functions as derivations in the sense that they are linear and satisfy the Leibnitz rule
\begin{equation}\label{derivation}
  \mathbf{v}(f+g)=\mathbf{v}(f)+\mathbf{v}(g),  \quad \mathbf{v}(fg)=f \mathbf{v}(g)+g \mathbf{v}(f).
\end{equation}
The action of the flow generated by  $\mathbf{v}$ on a function can be obtained from  the following Lie series (assuming its convergence)
\begin{equation}\label{Lie-series}
  f(\exp{(t \mathbf{v})x})=\sum_{j=0}^{\infty}\frac{\varepsilon^{j}}{j!}\mathbf{v}^{j}(f(x)).
\end{equation}
The Lie series is derived by expanding $f(\exp{(t \mathbf{v})x})$ into the Taylor's series in $\varepsilon$. For the special choice $f(x)=x$, we recover the flow \eqref{flow}.

If $y=\psi(x)$ is a  change of coordinates, then by the chain rule,   the vector field \eqref{vf-2} in the new coordinates $y$ is expressed as
\begin{equation}\label{vf-new-coord}
  \mathbf{v}=\sum_{j=1}^n\sum_{i=1}^n \xi_i(\psi^{-1}(y))\frac{\partial \psi_j}{\partial x_i}(\psi^{-1}(y))\frac{\partial}{\partial y_j}=\sum_{j=1}^n \mathbf{v}(\psi_j(x))\big\vert_{x=\psi^{-1}(y)}\frac{\partial}{\partial y_j}.
\end{equation}

The commutator or Lie bracket of two vector fields $\mathbf{v}$ and $\mathbf{w}$ is defined as the unique vector field satisfying
\begin{equation}\label{Lie-bracket}
  [\mathbf{v}, \mathbf{w}]f=\mathbf{v}(\mathbf{w}(f))-\mathbf{w}(\mathbf{v}(f))
\end{equation}
for all smooth functions $f:M\to \mathbb{R}$. If, in local coordinates,
$$\mathbf{v}=\sum_{i=1}^n \xi_i(x)\gen {x_i},  \quad \mathbf{w}=\sum_{i=1}^n \eta_i(x)\gen {x_i},$$ then
$$[\mathbf{v}, \mathbf{w}]=\sum_{i=1}^n\curl{\mathbf{v}(\eta_i)-\mathbf{w}(\xi_i)}\gen {x_i}.$$ The coefficients $\mathbf{v}(\eta_i)$ and $\mathbf{w}(\xi_i)$ are the actions of the vector fields $\mathbf{v}$ and $\mathbf{w}$ as derivations on the functions $\eta_i$ and $\xi_i$, respectively as defined in \eqref{action-on-func}.

\section{Differential equations and their symmetry group}
We consider a system of $n$-th order differential equations $\mathcal{E}$
\begin{equation}\label{sys}
  \mathcal{E}: \quad \mathsf{E}_{\nu}(x,u^{(n)})=\mathsf{E}_{\nu}(x,u,u^{(1)},\ldots, u^{(n)})=0, \quad \nu=1,2,\ldots, N,
\end{equation}
where $x=(x_1,\ldots,x_{p})\in \mathbb{R}^{p}$, $u=(u_1,\ldots,u_{q})\in \mathbb{R}^{q}$ ($p, q\in \mathbb{Z}^{+}$) are the independent and dependent variables, which form local coordinates on the space of independent and dependent variables  $E=X\times U \simeq \mathbb{R}^p\times \mathbb{R}^q$. The derivatives of $u$ are denoted by $u_{\alpha,J}=\partial^{J}u_{\alpha}/\partial x_{J}$, where $J=(j_1,\ldots,j_k)$, $1\leq j_{\nu}\leq p$, $k=j_1+\ldots+j_k$, is a symmetric multi-index of order $k=\#J$.  $u^{(k)}$ denotes all partial derivatives of order $\leq k$ of the components $u_{\alpha}$ of $u$, which provide coordinates on the jet space $J^n(x,u^{(n)})=J^n E$ . If there is a single independent and  dependent variable, namely $p=1$ and $q=1$, then the system becomes a scalar  ordinary differential equation. In that case, we simply write
$$\mathsf{E}(x,u,u_1,u_2,\ldots, u_n)=0,$$ where $u_1=u_x$, $u_2=u_{xx}$, $\ldots$, $u_n=u^{(n)}$.

The system $\mathsf{E}_{\nu}=0$ defined by a collection of smooth functions $\mathsf{E}=(\mathsf{E}_1, \ldots, \mathsf{E}_N)$ can be identified with a variety $\mathcal{S}_{\mathsf{E}}=\curl{(x,u^{(n)}):\mathsf{E}=0}$ contained in the $n$-th order jet space $J^n$ with local coordinates $(x,u^{(n)})$.

The functions $\mathsf{E}_{\nu}(x,u^{(n)})$ will be assumed to be regular, meaning that the Jacobian matrix of $\mathsf{E}_{\nu}$ with respect to the jet coordinates $(x,u^{(n)})$ has  maximal rank
$$\rank\left(\frac{\partial \mathsf{E}_{\nu}}{\partial x_i},\frac{\partial \mathsf{E}_{\nu}}{\partial u_{\alpha,J}}\right)=N,$$
at each $(x,u^{(n)})$ satisfying the system.

A classical  symmetry group of \eqref{sys} is a local group $G$ of point transformations $\Phi:E\to E$, a locally defined invertible map on the space  of independent and dependent variables, mapping solutions of the system to solutions
$$\Phi: \quad (\tilde{x},\tilde{u})=g.(x,u)=(\Phi_1(x,u),\Phi_2(x,u).$$
Such transformations act on solutions $u=f(x)$  by mapping pointwise their graphs. More precisely, if $\Gamma_{f}=\curl{(x,f(x)}$ is the graph of $f(x)$, then the mapped graph will have the graph $\Gamma_{\tilde{f}}=\{(\tilde{x},\tilde{f}(\tilde{x})\}=g.\Gamma_{f}\equiv \curl{g.(x,f(x))}$.

Contact or generalized transformations where $\Phi$ depends on higher order derivatives will not be treated here.

\begin{definition}
A local Lie group  of point transformations $G$ is called a symmetry group of
the system of partial differential equations \eqref{sys} if $\tilde{f}=g.f$ is a solution whenever $f$ is.
\end{definition}

To find the symmetry group, the prolonged transformation $\pr{n}\Phi:J^n\to J^n$ is required to preserve  the differential structure of the equation manifold $\mathcal{S}_{\mathsf{E}}$. In order to find the symmetry group Lie's infinitesimal approach will be used. We need to use the prolongation tool for the group transformation and the vector field generating it. Let $\Phi_{\varepsilon}=\exp(\varepsilon \vf)$ be a one-parameter subgroup of the connected group $G$ and let
\begin{equation}\label{sym-vf}
  \vf=\sum_{i=1}^p \xi_i(x,u)\gen{x_i}+\sum_{\alpha=1}^q \varphi_{\alpha}(x,u)\gen{u_\alpha}
\end{equation}
be the infinitesimal generator of $\Phi_{\varepsilon}$. The infinitesimal generator of the  prolonged one-parameter subgroup $\pr{n}\Phi_{\varepsilon}$ is defined to be the prolongation of the vector field $\vf$.

\begin{definition}\label{def-prolong}
The $n$-th prolongation $\pr{n}\vf$ of $\vf$ is a vector field on the $n$-th jet space $J^n$ defined by
\begin{equation}\label{nth-pro}
  \pr{n}\vf\big\vert_{(x,u^{(n)})}=\frac{d}{d\varepsilon}\Big\vert_{\varepsilon=0}\pr{n}\Phi_{\varepsilon}(x,u^{(n)})
\end{equation}
for every $(x,u^{(n)})\in J^n$.
\end{definition}
If we integrate $\pr{n}\vf$ we find the prolongation of the group action $\pr{n}\Phi_{\varepsilon}$ on the space $J^n$.  The prolonged vector field $\pr{n}\vf$  has the form
\begin{equation}\label{pro-vf}
  \pr{n}\vf=\sum_{i=1}^p \xi_i\gen{x_i}+\sum_{\alpha=1}^q \sum_{\#J\leq n}\varphi_{\alpha}^J\gen{u_{\alpha,J}},
\end{equation}
where the coefficients $\varphi_{\alpha}^J$ are given by the formula
\begin{equation}\label{vf-coeff}
  \varphi_{\alpha}^J=D_J\left(\varphi_{\alpha}-\sum_{i=1}^p \xi_i u_{\alpha,i}\right)+\sum_{i=1}^p\xi_i u_{\alpha,J,i},
\end{equation}
where $u_{\alpha,i}=\partial u_{\alpha}/\partial x_i$, $u_{\alpha,J,i}=\partial u_{\alpha,J}/\partial x_i$ and
$D_J=D_{j_1}\ldots D_{j_k}$, $1\leq j_\nu \leq p$ is the $J$-th total derivative operator  \cite{Olver1993}.
Here $D_i$ is the total differentiation operator defined by
$$D_i=\frac{\partial}{\partial x_i}+\sum_{\alpha=1}^q\sum_{J}u_{\alpha,J,i}\frac{\partial}{\partial u_{\alpha,J}}.$$
$D_i$ involves infinite summation, but its application to a particular differential function will only require finitely many terms of order $0\leq \# J\leq n$, where $n$ is the highest order derivative in the differential function on which $D_i$ acts.

There is a useful recursive formula for the coefficients of the prolonged vector field in \eqref{pro-vf}
\begin{equation}\label{recurs}
  \varphi_{\alpha}^{J,i}=D_i \varphi_{\alpha}^J-\sum_{k=1}^p (D_i \xi_k) u_{\alpha,J,k}.
\end{equation}
If $n$-th prolongation is known, the $(n+1)$-th prolongation can be calculated by the formula \eqref{recurs}. In particular, the coefficients of the first order derivatives $u_{x_j}$ in \eqref{pro-vf} are then given by
\begin{equation}\label{first-coeff-pro}
  \varphi_{\alpha}^{j}=D_j \varphi_{\alpha}-\sum_{k=1}^p (D_j \xi_k) u_{\alpha,k}.
\end{equation}
In the special case $p=q=1$, the recursion formula \eqref{recurs} simplifies to
\begin{equation}\label{recurs-p-q-1}
  \varphi^j=D_x \varphi^{j-1}-(D_x \xi)u^{(j)}, \quad j=1,2,\ldots.
\end{equation}
The coefficients of the second prolongation of the vector field $\mathbf{v}=\xi(x,u)\gen x+\varphi(x,u)\gen u$ corresponds to $j=1,2$ with the convention $\varphi^{0}=\varphi$
$$\varphi^{x}=D_x \varphi-(D_x\xi) u_{x},  \quad \varphi^{xx}=D_x \varphi^{x}-(D_x\xi) u_{xx}, \quad Q=\varphi-\xi u_x.$$

The prolongations of vector fields satisfy the linearity
\begin{equation}\label{pr-prop-lin}
  \pr{n}(a \vf+b \mathbf{w})=a\;  \pr{n}\vf+b \; \pr{n}\mathbf{w},
\end{equation}
for constants $a$, $b$ and the Lie algebra property
\begin{equation}\label{pr-prop-Lie}
  \pr{n}[\vf,\mathbf{w}]=[\pr{n}\vf,\pr{n}\mathbf{w}].
\end{equation}
Hence, the prolongation process defines a Lie algebra homomorphism from the space of vector fields on $J^0$ to the space of vector fields on $J^n$. If the vector fields $\vf$ form a Lie algebra, then their prolongations realize an isomorphic Lie algebra of vector fields on $J^n$.

\begin{example}\label{pr-tr-ex}
  Let us consider the smooth projective vector field
\begin{equation}\label{proj-vf}
  \vf=x^2 \gen x+xu\gen u
\end{equation}
generating the one-parameter local projective group
\begin{equation}\label{proj-group}
 \Phi_{\varepsilon}(x,u):\quad \tilde{x}(x,u;\varepsilon)=\frac{x}{1-\varepsilon x},  \quad \tilde{u}(x,u;\varepsilon)=\frac{u}{1-\varepsilon x},
\end{equation}
whenever $1-\varepsilon x\ne 0$.  The vector field $\mathbf{v}$ can be recovered by differentiating $\Phi_{\varepsilon}(x,u)$ at $\varepsilon=0$.

The first and second prolonged group transformations are derived by the usual chain rule for the ordinary derivatives:
$$\pr{1}\Phi_{\varepsilon}(x,u,u_1)=(\tilde{x},\tilde{u},\tilde{u}_1), \quad \tilde{u}_1=u_1+\varepsilon(u-xu_1),$$
and
$$\pr{2}\Phi_{\varepsilon}(x,u,u_1,u_2)=\left(\tilde{x},\tilde{u},u_1+\varepsilon(u-xu_1),(1-\varepsilon x)^3u_2\right).$$
Applying definition \ref{def-prolong} we find the prolonged vector fields
\begin{equation}\label{prvfs}
 \pr{1}\vf=\vf+(u-xu_1)\gen {u_1},  \quad \pr{2}\vf=\vf+(u-xu_1)\gen {u_1}-3xu_2\gen {u_2}.
\end{equation}
\end{example}


The following theorem determines the Lie algebra of the symmetry group $G$ and known as the infinitesimal criterion of invariance of \eqref{sys}.
\begin{theorem}\label{Thm-inf}
A connected local group of  transformations $G$ is a symmetry group of the system $\mathcal{E}$ of \eqref{sys} if and only if the $n$-th prolongation $\pr{n}\vf$ annihilates the system on solutions, namely
\begin{equation}\label{inv-criter}
  \pr{n}\vf(\mathsf{E}_{\nu})=0,\quad \nu=1,2,\ldots, N,
\end{equation}
whenever $u=f(x)$ is a solution to the system \eqref{sys} for every infinitesimal generator $\vf$ of $G$.
\end{theorem}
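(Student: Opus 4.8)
The plan is to descend from the group $G$ to a single generator, then from the generator to the jet space, and finally from invariance of the equation subvariety $\mathcal{S}_{\mathsf{E}}$ to the vanishing of a derivative. First, since $G$ is connected, every element is a finite product of elements lying in a neighborhood of the identity, and each such element lies on a one-parameter subgroup $\Phi_\varepsilon=\exp(\varepsilon\vf)$; because the property of mapping solutions to solutions is stable under composition, $G$ is a symmetry group if and only if each such $\Phi_\varepsilon$ is, so it suffices to treat a single $\vf$ of the form \eqref{sym-vf} together with its flow and prolonged flow $\pr{n}\Phi_\varepsilon$.

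Next I would transfer the question to $J^n$. A smooth $u=f(x)$ solves \eqref{sys} exactly when its $n$-jet graph $(x,\pr{n}f(x))$ lies in $\mathcal{S}_{\mathsf{E}}$ for all $x$, and $\pr{n}\Phi_\varepsilon$ is defined precisely so that it carries the $n$-jet of $f$ to the $n$-jet of $\tilde f=\Phi_\varepsilon\cdot f$ at the transformed base point (this is the content of prolongation; cf.\ \eqref{pro-vf}). Hence $\Phi_\varepsilon$ sends solutions to solutions if and only if $\pr{n}\Phi_\varepsilon$ maps every such jet back into $\mathcal{S}_{\mathsf{E}}$. Invoking the maximal-rank hypothesis on $\mathsf{E}$, $\mathcal{S}_{\mathsf{E}}$ is an embedded submanifold of $J^n$; invoking in addition local solvability of the system (so that every point of $\mathcal{S}_{\mathsf{E}}$ is realized as a jet of an actual solution), the statement ``$\Phi_\varepsilon$ maps solutions to solutions'' becomes equivalent to ``$\pr{n}\Phi_\varepsilon$ leaves $\mathcal{S}_{\mathsf{E}}$ invariant''.

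It then remains to reduce invariance of $\mathcal{S}_{\mathsf{E}}$ under the prolonged flow to the infinitesimal criterion \eqref{inv-criter}. For necessity, fix $z_0=(x,u^{(n)})\in\mathcal{S}_{\mathsf{E}}$ and set $h_\nu(\varepsilon)=\mathsf{E}_\nu(\pr{n}\Phi_\varepsilon(z_0))$; invariance gives $h_\nu\equiv 0$, so $h_\nu'(0)=0$, and by the chain rule together with Definition \ref{def-prolong} this derivative equals $\pr{n}\vf(\mathsf{E}_\nu)\big\vert_{z_0}$, which is \eqref{inv-criter}. For sufficiency, assume \eqref{inv-criter} holds on $\mathcal{S}_{\mathsf{E}}$. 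Because the $\mathsf{E}_\nu$ have maximal rank, any smooth function vanishing on $\mathcal{S}_{\mathsf{E}}$ is a smooth-coefficient linear combination of the $\mathsf{E}_\mu$ near $\mathcal{S}_{\mathsf{E}}$ (a Hadamard-type lemma, using the $\mathsf{E}_\nu$ as part of a transverse coordinate system), so $\pr{n}\vf(\mathsf{E}_\nu)=\sum_\mu c_{\nu\mu}\,\mathsf{E}_\mu$ locally. Using the flow identity $\tfrac{d}{d\varepsilon}\pr{n}\Phi_\varepsilon(z_0)=\pr{n}\vf\big\vert_{\pr{n}\Phi_\varepsilon(z_0)}$ from \eqref{propert}, the functions $h_\nu$ satisfy the linear ODE system $h_\nu'(\varepsilon)=\sum_\mu c_{\nu\mu}(\varepsilon)\,h_\mu(\varepsilon)$ with $h_\nu(0)=0$, so by uniqueness for linear systems $h_\nu\equiv 0$; thus $\pr{n}\Phi_\varepsilon$ preserves $\mathcal{S}_{\mathsf{E}}$, and by the equivalence above $\Phi_\varepsilon$—and hence all of $G$—maps solutions to solutions.

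The main obstacle is the equivalence used in the second step: \emph{preserving the equation manifold} $\Longleftrightarrow$ \emph{mapping solutions to solutions}. The forward implication is immediate, but the converse genuinely requires local solvability—maximal rank alone does not suffice, since there exist pathological systems whose equation manifold is invariant under $\pr{n}G$ while some transformed solution fails to solve the system simply because not every jet in $\mathcal{S}_{\mathsf{E}}$ is attained by a true solution. I would therefore state local solvability explicitly as a hypothesis (and note that it holds automatically for the normal systems, e.g.\ those in Cauchy--Kovalevskaya form, used in the sequel). The remaining ingredient, the Hadamard-type representation of functions vanishing on a regular submanifold, is routine once the maximal-rank condition is in force.
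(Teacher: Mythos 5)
The paper does not actually prove Theorem \ref{Thm-inf}: it is stated as the classical infinitesimal criterion and used as a black box (the standard reference being Olver's book), so there is no in-paper argument to compare yours against. On its own merits your proof is correct and is precisely the standard one: reduction to one-parameter subgroups via connectedness, identification of ``maps solutions to solutions'' with invariance of the variety $\mathcal{S}_{\mathsf{E}}\subset J^n$ under $\pr{n}\Phi_\varepsilon$, necessity by differentiating $h_\nu(\varepsilon)=\mathsf{E}_\nu(\pr{n}\Phi_\varepsilon(z_0))$ at $\varepsilon=0$, and sufficiency via the Hadamard representation $\pr{n}\vf(\mathsf{E}_\nu)=\sum_\mu c_{\nu\mu}\mathsf{E}_\mu$ followed by uniqueness for the resulting linear ODE system $h_\nu'=\sum_\mu c_{\nu\mu}h_\mu$, $h_\nu(0)=0$. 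Your flagging of local solvability is a genuine and worthwhile point: the maximal-rank (regularity) assumption made earlier in the paper is not by itself enough for the ``only if'' direction, nor for upgrading ``\eqref{inv-criter} holds at jets of solutions'' to ``holds on all of $\mathcal{S}_{\mathsf{E}}$'' in the ``if'' direction; one needs the system to be nondegenerate (maximal rank together with local solvability), a hypothesis the paper leaves implicit. The only minor points to tighten are (i) that the transformed graph $g\cdot\Gamma_f$ is again the graph of a single-valued function only after shrinking to a suitable neighborhood, which is part of what ``local'' group of transformations is hiding, and (ii) that the linear ODE argument is first carried out for $\varepsilon$ small enough that the flow stays in the tubular neighborhood where the Hadamard representation is valid, and then propagated by connectedness; both are routine.
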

Eqs. \eqref{inv-criter} are known as the determining equations of the symmetry group for the system.  They form a large over-determining linear system of partial differential equations for the coefficients $\xi_i$  and $\varphi_{\alpha}$ of $\mathbf{v}$.
This criterion has been applied to many differential equations arising in different branches of mathematics, physics and engineering to compute symmetry groups. The computation of symmetry group using the infinitesimal approach (Theorem \eqref{Thm-inf}) have been implemented in several computer algebra systems, such as MATHEMATICA, MAPLE, REDUCE, MACSYMA (or freely available MAXIMA) \cite{ChampagneHeremanWinternitz1991, Hereman1996}. There are packages dedicated to the symmetry group calculations which make considerably easy the routine steps of finding the determining system and   partial integration of them. Some packages are capable of triangularize the overdetermined system using differential Gr\"obner basis method. Packages equipped with automatic integrators can usually fail to provide the general solution of the determining system depending on the complexity of the system. We refer to \cite{SteinbergMeloMarinhoJunior2014} for the symbolic calculation of  symmetries of differential equations.

There is an alternative formulation of the prolongation formula, which is  useful in prolongation computations. This requires the formalism of the evolutionary vector fields. Given the vector field $\mathbf{v}$ as in \eqref{vf}, we define the $q$-tuple $Q(x,u^{(1)})=(Q_1, \ldots, Q_q)$ defined by
$$Q_{\alpha}(x,u^{(1)})=\varphi_{\alpha}(x,u)-\sum_{i=1}^p\xi_i(x,u)u_{\alpha,i}\quad \alpha=1,2,\ldots,q.$$ The functions $Q_{\alpha}$ are called the characteristics of the vector field $\mathbf{v}$. Then, we have
\begin{equation}\label{coeff-Q-pro}
  \varphi_{\alpha}^J=D_JQ_{\alpha}+\sum_{i=1}^p \xi_i u_{\alpha,J,i}, \quad D_J=D_{j_1}\ldots D_{j_k}, \quad 1\leq j_\nu \leq p
\end{equation}
and the $n$-th prolongation of $\mathbf{v}$ can be expressed as
$$\pr{n}\mathbf{v}=\pr{n}\mathbf{v}_Q+\sum_{i=1}^p\xi_i D_i,$$
where
$$\mathbf{v}_Q=\sum_{\alpha=1}^q Q_{\alpha}(x,u^{(1)})\gen{u_{\alpha}},  \quad \pr{n}\mathbf{v}_Q=\sum_{\alpha=1}^q\sum_{J}D_JQ_{\alpha}\gen{u_{\alpha,J}}.$$

Obviously, $\mathbf{v}_Q$ and their prolongations do not act on the independent variables $x_i$. In terms of characteristics $Q_{\alpha}$, the infinitesimal transformations  can be written as
$$\tilde{x}_i=x_i, \quad \tilde{u}_{\alpha}=u_{\alpha}+\varepsilon Q_{\alpha}+\mathcal{O}(\varepsilon^2).$$
Since $D_i\mathsf{E}_{\nu}=0$ on solutions, we can replace the infinitesimal symmetry condition \eqref{inv-criter} by the simpler formula
\begin{equation}\label{inv-criter-2}
  \pr{n}\mathbf{v}_Q\Big\vert_{\mathsf{E}_{\nu}=0}=0.
\end{equation}

\begin{example}\label{Lap}
  We show that the Laplace equation $\Delta u(x,y)=u_{xx}+u_{yy}=0$ in the plane is invariant under the symmetry group generated by the vector field
\begin{equation}\label{inf-vf}
 \mathbf{v}=\xi(x,y)\gen x+\eta(x,y)\gen y,
\end{equation}
where $\xi$ and $\eta$ satisfy the Cauchy--Riemann equations $\xi_x=\eta_y$ , $\xi_y=-\eta_x$, in other words $\xi$, $\eta$ are harmonic functions and therefore $\mathbf{v}$ generates an infinite-dimensional symmetry group of the two-dimensional Laplace equation. We recall that the general solution can be expressed in terms of two arbitrary analytic functions.

The second prolongation of $\mathbf{v}$ is
$$\pr{2}\mathbf{v}=\mathbf{v}+\varphi^{x}\gen{u_x}+\varphi^{y}\gen{u_y}+\varphi^{xx}\gen{u_{xx}}
+\varphi^{xy}\gen{u_{xy}}+\varphi^{yy}\gen{u_{yy}}.$$
The coefficients $\varphi^{xx}$, $\varphi^{yy}$ are calculated from the general prolongation formulas \eqref{pro-vf}-\eqref{vf-coeff}
$$\varphi^{xx}=-(2\eta_x u_{xy}+2\xi_x u_{xx}+\eta_{xx}u_y+\xi_{xx}u_x), $$ $$\varphi^{yy}=-(2\xi_y u_{xy}+2\eta_y u_{yy}+\eta_{yy}u_y+\xi_{yy}u_x).$$
So from the Cauchy--Riemann equations we find that the infinitesimal criterion of invariance \eqref{inv-criter} is satisfied
$$\pr{2}\mathbf{v}(\Delta u)=\varphi^{xx}+\varphi^{yy}=-2\xi_x \Delta u=0$$ on the solution surface. The linearity of the equation implies that it also admits the additional trivial  symmetries $u\gen u$ and $\rho(x,y)\gen u$ with $\Delta \rho=0$ (one can multiply solutions by constants and add them).  The symmetry condition then becomes $\pr{2}(u\gen u)(\Delta u)=\Delta u=0$ and  $\pr{2}(\rho\gen u)(\Delta u)=\Delta \rho=0$ on solutions. Obviously, the second one is satisfied if $\rho$ is an arbitrary harmonic function.

The special choice   $(\xi,\eta)=(x^2-y^2,2xy)$ leads to the conformal invariance of the Laplace equation. The one-parameter conformal symmetry group corresponding to the vector field $\mathbf{c}_x=(x^2-y^2)\gen x+2xy\gen y$ is easily obtained solving the following complex initial value problem for $\tilde{z}(x,y;\varepsilon)$ (more precisely, integrating the analytic vector field $z^2\gen z$)
$$\frac{d\tilde{z}}{d\varepsilon}=\tilde{z}^2=(\tilde{x}^2-\tilde{y}^2)+2i\tilde{x}\tilde{y}$$ with the condition $\tilde{z}(x,y;0)=z(x,y)=x+i y$. The flow is given by
$$\tilde{z}=\frac{z}{1-\varepsilon z}=\frac{z-\varepsilon|z|^2}{(1-\varepsilon z)(1-\varepsilon \bar{z})}.$$ We separate the real and complex parts of $\tilde{z}$ to obtain the following (not necessarily globally-defined) symmetry group $\exp(\varepsilon \mathbf{c}_x)(x,y)$
\begin{equation}\label{conf-cx}
  \tilde{x}=\frac{x-\varepsilon(x^2+y^2)}{1-2\varepsilon x+\varepsilon^2(x^2+y^2)}, \quad \tilde{y}=\frac{y}{1-2\varepsilon x+\varepsilon^2(x^2+y^2)},
\end{equation}
possessing the invariant function $\zeta(x,y)=y(x^2+y^2)^{-1}$, satisfying the relation $\zeta(\tilde{x},\tilde{y})=\zeta(x,y)$  on $\mathbb{R}^2\setminus (0,0)$. $\zeta(x,y)$ is readily obtained by eliminating the group parameter $\varepsilon$ in \eqref{conf-cx} (see the next Section for a more precise definition and infinitesimal derivation of invariant function of a symmetry group).

It is a well-know fact that the inversion map  $I(x,y)=(x^2+y^2)^{-2}(x,y)$, $(x,y)\ne 0$ (an involution: $I^{-1}=I$) is a discrete (not connected) symmetry, i.e. if $f(x,y)$ satisfies the Laplace equation, so does $f((x^2+y^2)^{-2}x,(x^2+y^2)^{-2}y)$.
We observe that the map $(\hat{x},\hat{y})=I(x,y)$ also provides the coordinates rectifying $\mathbf{c}_x$ to $-\gen{\hat{x}} $.

Conjugating any symmetry  of the equation by $I$ will produce a new symmetry (a conformal mapping here). Indeed, the pushforward $I_{*}$ of the vector field $-\gen x$ through $I$ is $I_{*}(-\gen x)=\tilde{\mathbf{c}}_{\tilde{x}}$, where tilde means that the vector field is written in the new coordinates.
So $\exp(\varepsilon \mathbf{c}_x)(x,y)$  can be recovered by conjugating the translational group along the $x$-axis: $x\to x-\varepsilon$ by $I$
$$\exp\curl{\varepsilon \mathbf{c}_x}(x,y)=I(x,y)\circ\exp\curl{-\varepsilon \gen x}\circ I(x,y). $$
Similarly, since $I_{*}(-\gen y)=\tilde{\mathbf{c}}_{\tilde{y}}=2\tilde{x}\tilde{y}\gen {\tilde{x}}+(\tilde{y}^2-\tilde{x}^2)\gen {\tilde{y}}$,  conjugating  $-\gen y$ by $I$,
$$\exp\curl{\varepsilon \mathbf{c}_y}(x,y)=I(x,y)\circ\exp\curl{-\varepsilon \gen y}\circ I(x,y). $$
generates another conformal transformation
\begin{equation}\label{conf-cy}
  \tilde{x}=\frac{x}{1-2\varepsilon y+\varepsilon^2(x^2+y^2)}, \quad \tilde{y}=\frac{y-\varepsilon(x^2+y^2)}{1-2\varepsilon y+\varepsilon^2(x^2+y^2)}.
\end{equation}
The one-parameter group transformations generated by the elements of the abelian subalgebra  $\curl{\mathbf{c}_x, \mathbf{c}_y}$ are conformal because they leave form invariant the planar metric:
$$d\tilde{x}^2+d\tilde{y}^2=\lambda(x,y;\varepsilon)(dx^2+dy^2),$$ for some function $\lambda$ (conformal factor). For $\mathbf{c}_x$, $\lambda=\left(1-2\varepsilon x+\varepsilon^2(x^2+y^2)\right)^{-2}$. Note that the inversion itself is also a conformal mapping with $\lambda=(x^2+y^2)^{-2}$. The full (special) conformal transformations are given by the translation group conjugation $I\circ\exp\{\varepsilon_1 \gen x+\varepsilon_2  \gen y\}\circ I(x,y)$ in the form
\begin{equation}\label{full-special-conf}
  \tilde{\mathbf{x}}=(\tilde{x},\tilde{y})=\frac{\mathbf{x}-\boldsymbol{\varepsilon}(x^2+y^2)}{1-2\boldsymbol{\varepsilon}\cdot\mathbf{x} +\varepsilon^2(x^2+y^2)}, \quad \boldsymbol{\varepsilon}=(\varepsilon_1,\varepsilon_2), \quad \mathbf{x}=(x,y).
\end{equation}
Note that this transformation is not  globally defined because  the conformal factor $\lambda(x,y;\varepsilon_1,\varepsilon_2)=1-2\boldsymbol{\varepsilon}\cdot\mathbf{x} +\varepsilon^2(x^2+y^2)=0$ at the point $\mathbf{x}=\boldsymbol{\varepsilon}/\varepsilon^2$.

We conclude that action of this group on solutions states that $u=f(\tilde{x},\tilde{y})$ is also a solution, whenever $f(x,y)$ is solution to the Laplace equation. For example, with the help of the invariant $\zeta$, the radial solution $f(x,y)=\log(x^2+y^2)$ or the angular solution $f(x,y)=\arctan(y/x)$, among many others (homogeneous harmonics) can be mapped to produce the new solutions
$$u=\log\frac{x^2+y^2}{1-2\varepsilon x+\varepsilon^2(x^2+y^2)},  \quad u=\arctan\frac{y}{x-\varepsilon(x^2+y^2)}.$$

Adding to $\mathbf{c}_x$ and $\mathbf{c}_y$ the subalgebras obtained by other  choices $(\xi,\eta)=(1,0), (0,1)$, $(\xi,\eta)=(-y,x)$ and, $(\xi,\eta)=(x,y)$ leading to the translational, rotational and dilatational  invariance, in terms of vector fields, $\mathbf{p}_x=\gen x$, $\mathbf{p}_y=\gen y$, $\mathbf{j}=-y\gen x+x\gen y$, $\mathbf{d}=x\gen x+y\gen y$, respectively,  we obtain the 6-dimensional Lie algebra  of the conformal group $\conf(\mathbb{R}^2)$ of the Euclidean plane $\mathbb{R}^2$, isomorphic to $\Ort(3,1)$, the Lorentz group of four-dimensional Minkowski space \cite{Gonzalez-LopezKamranOlver1990}. Obviously,  the subalgebra spanned by $\curl{\mathbf{p}_x,\mathbf{d},\mathbf{c}_x}$ is $\Sl(2,\mathbb{R})$. This symmetry group is  the two-dimensional analogue of the full conformal group in dimensions $n\geq 3$. Note that the full conformal group in the plane $\mathbb{R}^2\cong \mathbb{C}$ is infinite-dimensional, with the Lie group $\Ort(3,1)$ as its maximal finite-dimensional subgroup, because any analytic function $f:\mathbb{C}\to \mathbb{C}$ leads to a conformal transformation (In our case $f(z)=z,iz,z^2$). We have excluded the trivial symmetry algebra stemming from the linearity of the PDE. Their non-zero commutators satisfy
$$[\mathbf{p}_{x,y},\mathbf{d}]=\mathbf{p}_{x,y}, \quad [\mathbf{j},\mathbf{p}_x]=-\mathbf{p}_y, \quad  [\mathbf{j},\mathbf{p}_y]=\mathbf{p}_x, \quad [\mathbf{p}_1,\mathbf{c}_x]=[\mathbf{p}_2,\mathbf{c}_y]=2 \mathbf{d},$$
$$[\mathbf{p}_x,\mathbf{c}_y]=-[\mathbf{p}_y,\mathbf{c}_x]=-2\mathbf{j},  \quad [\mathbf{d},\mathbf{c}_{x,y}]=\mathbf{c}_{x,y}, \quad [\mathbf{j},\mathbf{c}_x]=-\mathbf{c}_y,  \quad [\mathbf{j},\mathbf{c}_y]=\mathbf{c}_x.$$

A nonlinear variant of the Laplace equation, known as the conformal scalar curvature equation, or the elliptic Liouville's equation, occurs in the study of isothermal coordinates in differential geometry and has the form
\begin{equation}\label{conf-pde}
  u_{xx}+u_{yy}=K e^u,
\end{equation}
where $K$ is constant (Gaussian curvature).

The conformal symmetry structure of this equation on the $(x,y)$-plane is preserved.  The vector field generating the symmetry group $G$ of the equation is given by
$$\mathbf{v}=\xi\gen x+\eta\gen y-2\xi_x\gen u,$$ where $\xi(x,y)$, $\eta(x,y)$ satisfy the Cauchy--Riemann equations. For $(\xi,\eta)=(x^2-y^2,2xy)$, $\mathbf{v}=(x^2-y^2)\gen x+2xy\gen y-4x\gen u$. We solve the initial value problem $d\tilde{u}/d\varepsilon=-4\tilde{x}$, $\tilde{u}(x,y;0)=u(x,y)$ using \eqref{conf-cx} and find the transformation of $u$ under the group action:
\begin{equation}\label{tr-u}
 \tilde{u}(x,y;\varepsilon)=2\ln \sigma(x,y;\varepsilon)+u(x,y), \quad \sigma(x,y;\varepsilon)=1-2\varepsilon x+\varepsilon^2(x^2+y^2).
\end{equation}
We note that
$$\sigma(x,y;\varepsilon)=\sigma(\tilde{x},\tilde{y};-\varepsilon)^{-1}=1+2\varepsilon \tilde{x}+\varepsilon^2(\tilde{x}^2+\tilde{y}^2).$$
Application of the one-parameter transformation group defined by \eqref{conf-cx} and \eqref{tr-u} to a solution $f(x,y)$, where the coordinates $(x,y)$  are written in terms of $(\tilde{x},\tilde{y})$  leads to the transformed new solution $u_{\varepsilon}(x,y)$ (after the tildes are removed)
$$u_{\varepsilon}(x,y)=-2\ln \sigma(x,y;-\varepsilon)+f(\tilde{x},\tilde{y}),$$
where
$$\tilde{x}=\frac{x+\varepsilon(x^2+y^2)}{1+2\varepsilon x+\varepsilon^2(x^2+y^2)},  \quad \tilde{y}=\frac{y}{1+2\varepsilon x+\varepsilon^2(x^2+y^2)}.$$
\end{example}

\begin{remark}
The Laplace equation in $\mathbb{R}^n$ with $n\geq 3$ is invariant only under a finite dimensional conformal Lie symmetry group of $\mathbb{R}^n$ with dimension $\binom{n+2}{2}=(n+1)(n+2)/2$, consisting of the groups of translations, rotations, dilation  and conformal transformations (obtained by conjugating the $n$-components of the translational group via inversion $I(\mathbf{x})=|\mathbf{x}|^{-2}\mathbf{x}$) on $\mathbb{R}^n\setminus\curl{0}$.

The  symmetry algebra of the Laplace equation (see \cite{SattingerWeaver1986} for its derivation)
$$\Delta_nu(x)=0,  \quad x=(x_1,x_2,\ldots,x_n)\in\mathbb{R}^n$$  is spanned by
\begin{equation}\label{sym-Laplace}
  \begin{split}
     &  \mathbf{p}_i=\gen {x_i}, \quad \mathbf{d}=\sum_{j=1}^{n}x_j\gen {x_j},\quad
        \mathbf{j}_{ij}=x_i\gen {x_j}-x_j\gen {x_i}, \quad i\ne j,\\
     & \mathbf{c}_i=2x_i\mathbf{d}-r^2\gen {x_i}+(2-n)x_iu\gen u, \quad r^2=\sum_{j=1}^{n}x_j^2,\\
     &  \mathbf{m}=u\gen u, \quad \mathbf{v}(\rho)=\rho(x)\gen u,  \quad \Delta_n \rho=0,
  \end{split}
\end{equation}
where $i,j=1,2,\ldots, n$.
The $(n+1)(n+2)/2$-dimensional Lie algebra $\{\mathbf{p}_i,\mathbf{d},\mathbf{j}_{ij},\mathbf{c}_i\}$ generates the conformal group $\conf(\mathbb{R}^n)$ of the Euclidean plane $\mathbb{R}^n$.

We note that the conformal transformations of $\mathbf{R}^n$ are generated by the conformal vector fields $\mathbf{v}=\sum_{i=1}^n \xi_i(x)\gen {x_i}$, where the coefficients $\xi_i$ satisfy the conformal Killing equations
$$\frac{\partial\xi_i}{\partial x_j}+\frac{\partial\xi_j}{\partial x_i}=\lambda(x)\delta_{ij}, \quad i,j=1,2,\ldots, n$$ for some undetermined function $\lambda$ (the conformal factor).

The linear wave equation $u_{tt}=\Delta u$, $u=u(t,x)$, $(t,x)\in\mathbb{R}^{n+1}$ is invariant under a Lie point symmetry algebra isomorphic to  the Lorentz group $\Ort(n+1,2)$, of dimension $\binom{n+3}{2}=(n+2)(n+3)/2$, $n\geq 2$ in a Minkowski space with an indefinite underlying metric $ds^2=dt^2-dx_1^2-\cdots-dx_n^2$.

The nonlinear wave (or Klein--Gordon)  equation $u_{tt}-u_{xx}=F(u)$ is invariant under the Poincar\'{e} group $\PP(1,1)$  of $1+1$-dimensional Minkowski plane for any $F(u)$ with $F''\ne 0$. Its Lie symmetry algebra is generated by the translational vector fields and Lorenz boost
$$\mathbf{v}_1=\gen t,  \quad \mathbf{v}_2=\gen x, \quad  \mathbf{v}_3=t\gen x+x\gen t.$$ For two specific forms of $F(u)$, the symmetry algebra is larger (see Section \ref{group-class-prob} for the group classification problem). The additional vector field for $F(u)=F_0 u^k$ is $\mathbf{v}_4=t\gen t+x\gen x+\frac{2}{1-k}u\gen u$, and $\mathbf{v}_4=t\gen t+x\gen x-\frac{2}{\lambda}\gen u$ for $F(u)=F_0 e^{\lambda u}$.

The second special case  is known as the hyperbolic Liouville equation, which in the light-cone (characteristic) coordinates $r=t+x$, $s=t-x$ has the form
\begin{equation}\label{hyper-Liouv}
  u_{rs}=ae^u.
\end{equation}
This equation is invariant under the infinite-dimensional symmetry algebra generated by the vector field
\begin{equation}\label{vf-Liouv}
  \mathbf{v}=\xi(r)\gen r+\eta(s)\gen s-(\xi'(r)+\eta'(s))\gen u,
\end{equation}
where $\xi(r)$ and $\eta(s)$ are arbitrary functions. This is true for any $a$ because
$$\pr{2}\mathbf{v}(u_{rs}-ae^u)=-(u_{rs}-ae^u)(\xi'(r)+\eta'(s)).$$
$\mathbf{v}$ generates the infinite dimensional symmetry group depending on two arbitrary functions
\begin{equation}\label{inf-dim-sym-group}
  \tilde{r}=f(r), \quad \tilde{s}=g(s), \quad \tilde{u}=u-\ln (f'(r) g'(s)),  \quad f'g'\ne 0.
\end{equation}
The group action implies that if $F(r,s)$ is a solution,  so is the function $u=F(\tilde{r},\tilde{s})+\ln (f'g')$.
The general solution of \eqref{hyper-Liouv} for $a\ne 0$ can be found by acting on a particular solution of the form $$u=F(r+s)=\ln \frac{2}{a(r+s)^2}$$ as
\begin{equation}\label{Liouv-sol}
  u=F(\tilde{r}+\tilde{s})+\ln (f'g')=\ln \left(\frac{2}{a}\frac{f'(r)g'(s)}{(f(r)+g(s))^2}\right).
\end{equation}
So the general solution of
$$u_{tt}-u_{xx}=ae^{u}$$
can be expressed in terms of two arbitrary functions as
$$u=\ln \left(\frac{8}{a}\frac{f'(t+x)g'(t-x)}{(f(t+x)+g(t-x))^2}\right).$$
This solution was obtained by Liouville as early as 1853.

The linear case $F''=0$ is quite different, the symmetry group is the infinite-dimensional conformal group.

\end{remark}
\section{Differential invariants}
Given a Lie algebra $\lie$, characterization of all invariant equations, equations that remain invariant under the symmetry group $G$ of $\lie$ requires the notion of differential invariants, which are functions unaffected by the action of $G$ on some manifold $M$. An ordinary invariant  is a $C^{\infty}(J(x,u))$ function $I(x,u)$ on $J(x,u)\subset M$, which satisfies $I(g.(x,u))=I(x,u)$ for all group elements $g\in G$ and coordinates $(x,u)$.
\begin{definition}
A differential invariant of order $n$ of a connected transformation group $G$ is a differential function $I(x,u^{(n)})$ on the jet space $J^{n}$ if $I(g^{(n)}.(x,u^{(n)}))=I(x,u^{(n)})$ for all $g\in G$ and $(x,u^{(n)})\in J^{n}$.
\end{definition}
An ordinary invariant is a differential invariant of order 0.
The following infinitesimal invariance criterion for differential invariants serves to determine differential invariants of a given connected group of transformations in a simple manner by just solving a system of linear first order PDEs.
\begin{proposition}
A function $I$ is a differential function for a connected group $G$ if and only if it is annihilated by all the prolonged vector fields (infinitesimal generators)
\begin{equation}\label{criter-DI}
 \mathbf{v}^{(n)}(I)\equiv\pr{n}\mathbf{v}(I)=0
\end{equation}
for all $\mathbf{v}\in \lie$.
\end{proposition}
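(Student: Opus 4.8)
The plan is to reduce $G$-invariance to invariance under each one-parameter subgroup $\Phi_\varepsilon=\exp(\varepsilon\mathbf{v})$, $\mathbf{v}\in\lie$, and then to recast that in infinitesimal form as the vanishing of $\pr{n}\mathbf{v}(I)$ by differentiating $I$ along the prolonged flow. Throughout, write $z=(x,u^{(n)})$ for a point of $J^n$.

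For the forward implication, suppose $I$ is a differential invariant, so $I(g^{(n)}.z)=I(z)$ for all $g\in G$ and all $z$. Fix $\mathbf{v}\in\lie$ and specialize to $g=\Phi_\varepsilon$, giving $I(\pr{n}\Phi_\varepsilon(z))=I(z)$ for all sufficiently small $\varepsilon$ and all $z$. Differentiating this identity in $\varepsilon$ at $\varepsilon=0$, and using Definition~\ref{def-prolong} (which states that $\pr{n}\mathbf{v}$ is exactly the infinitesimal generator of $\pr{n}\Phi_\varepsilon$) together with the derivation formula \eqref{action-on-func} applied on $J^n$, yields $\pr{n}\mathbf{v}(I)\big\vert_z=0$; since $z$ is arbitrary, $\pr{n}\mathbf{v}(I)\equiv 0$.

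For the converse, assume $\pr{n}\mathbf{v}(I)\equiv 0$ for every $\mathbf{v}\in\lie$. Fix $\mathbf{v}$ and set $F(\varepsilon)=I(\pr{n}\Phi_\varepsilon(z))$. Because the prolongation of the flow of $\mathbf{v}$ is itself a one-parameter group, namely the flow of $\pr{n}\mathbf{v}$ (the integrated content of Definition~\ref{def-prolong}, equivalently a consequence of the homomorphism property \eqref{pr-prop-Lie}), the group law \eqref{propert} lifts to $\pr{n}\Phi_{\varepsilon+s}=\pr{n}\Phi_s\circ\pr{n}\Phi_\varepsilon$. Hence, writing $w=\pr{n}\Phi_\varepsilon(z)$, we get $F'(\varepsilon)=\tfrac{d}{ds}\big\vert_{s=0}I(\pr{n}\Phi_s(w))=\pr{n}\mathbf{v}(I)\big\vert_w=0$, so $F$ is constant and $I(\pr{n}\Phi_\varepsilon(z))=F(0)=I(z)$ for all admissible $\varepsilon$. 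Thus $I$ is invariant under every one-parameter subgroup of $G$.

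To conclude, invoke connectedness: a connected (local) Lie group is generated by any neighborhood of the identity, so every $g$ near the identity factors as $g=\exp(\varepsilon_k\mathbf{v}_k)\cdots\exp(\varepsilon_1\mathbf{v}_1)$ with $\mathbf{v}_j\in\lie$, and applying the previous step to each factor in turn gives $I(g^{(n)}.z)=I(z)$. The only point demanding care is the interchange of prolongation and exponentiation — that $\pr{n}$ intertwines the flow of $\mathbf{v}$ with the flow of $\pr{n}\mathbf{v}$ — but this is supplied essentially verbatim by Definition~\ref{def-prolong} and the remark following it, so no genuinely new computation is needed; what remains is the routine local-group bookkeeping about the domains on which the relevant flows are defined.
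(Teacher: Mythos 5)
Your proof is correct; the paper states this proposition without proof, and your argument --- reducing $G$-invariance to invariance along one-parameter subgroups, differentiating at $\varepsilon=0$ for the forward direction, showing $\varepsilon\mapsto I(\pr{n}\Phi_\varepsilon(z))$ has identically vanishing derivative for the converse, and invoking connectedness to write group elements as products of exponentials --- is exactly the standard one the paper implicitly relies on. One small attribution quibble: the identity $\pr{n}\Phi_{\varepsilon+s}=\pr{n}\Phi_s\circ\pr{n}\Phi_\varepsilon$ comes from the functoriality of prolongation of point transformations (a group-level fact), not from the Lie-algebra-level homomorphism property \eqref{pr-prop-Lie}; your primary justification via Definition~\ref{def-prolong} and the remark following it is the right one.
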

An function $I$ is an ordinary invariant if and only if $\mathbf{v}(I)=0$. For a general vector field
$$\mathbf{v}=\sum_{i=1}^n \xi_i(x)\gen{x_i},$$ the  coordinates $y=\eta(x)$ rectifying $\mathbf{v}=\gen{y_1}$ are found by solving the first order partial differential equations $\mathbf{v}(\eta_1)=1$, $\mathbf{v}(\eta_i)=0$, $i>1$. So the new coordinates $y_i(x)$ are the functionally independent invariants of the one-parameter group generated    by $\mathbf{v}$.
\begin{remark}
The dimension of the space $J^{n}$ is $\dim J^{n}=p+q\binom{p+n}{n}$. The number of derivative coordinates of order exactly $n$ is given by $q_n=\dim J^{n}-\dim J^{n-1}=q\binom{p+n-1}{n}$.
\end{remark}

The number of functionally independent differential invariants of order $n$ is equal to
\begin{equation}\label{number-DI}
  k=\dim J^n-(\dim G-\dim G_0),
\end{equation}
where $G_0$ is the stabilizer subgroup (also called the isotropy group) of a generic point on $J^n$. $d=\dim G-\dim G_0$ is the dimension of the orbit of $G$ at a generic point.
An equivalent formula for $k_n$ is
$$k_n=\dim J^n-\rank Z\geq 0,$$
where $Z$ is the matrix of size $r\times \dim J^n$ formed by
the coefficients  of the $n$-th prolongations $\pr{n}(\mathbf{v}_\nu)$, $\nu=1,2,\ldots,r$ of the basis vector fields $\mathbf{v}_1,\ldots,\mathbf{v}_r$ of the Lie algebra $\lie$ of the group $G$ as rows
$$\mathbf{v}_{\nu}=
 \sum_{i=1}^p \xi_{i,\nu}(x,u)\gen{x_i}+\sum_{\alpha=1}^q \varphi_{\alpha,\nu}(x,u)\gen{u_\alpha},  \quad \nu=1,2,\ldots,r.
$$
The rank $m$ of $Z$ is calculated at a generic point of $J^n$. For the special case of one independent and one dependent variable $p=q=1$ we have $\dim J^n=n+2$ and the number of functionally independent invariants is $k_n=n+2-m$.

The set of $n$-th order differential invariants  $I_1, I_2,\ldots,I_k$ of $\lie$ will be denoted by $\mathcal{I}_n(\lie)$. This set is an $\mathbb{R}$-algebra. This means that $\mathcal{I}_n(\lie)$ is a vector space over the field $\mathbb{R}$ and satisfies the property that  any arbitrary smooth function $H(I_1,\ldots,I_k)$ of the set of differential invariants $I_1, I_2,\ldots,I_k$ is also a differential invariant, i.e. if $I_1, I_2,\ldots,I_k\in \mathcal{I}_n(\lie)$, then $H(I_1,\ldots,I_k)\in \mathcal{I}_n(\lie)$. They also satisfy the inclusions
$$\mathcal{I}_0(\lie)\subset \mathcal{I}_1(\lie)\subset \ldots \mathcal{I}_n(\lie)\subset \ldots$$ The algebra $\bigcup_{n=0}^{\infty}\mathcal{I}_n(\lie)$ is called the algebra of differential invariants.

\subsection{Invariant differentiation}
Lie \cite{LieEngel1891, Lie1927, LieHermann1975} and Tresse \cite{Tresse1894} introduced the notion of "invariant" differential operators to obtain $(n+1)$-st order differential invariants from $n$-th order ones. This enables one to produce all the higher order functionally independent differential invariants by successive application of the invariant operators to lower order invariants. The situation is easier when there is only  one dependent variable. Let the group $G$ with Lie algebra $\lie$ act on the basic space $E=J^{0}(\mathbb{R},\mathbb{R}^{n})\simeq X\times U$.

\begin{proposition}\label{prop}
Suppose that $I(x,u^{(n)})$ and $J(x,u^{(n)})$ are functionally independent invariants, at least one of which has order exactly $n$. Then the ratio $D_x J/D_x I$ (the Tresse derivative) is an $(n+1)$-st order differential invariant.
\end{proposition}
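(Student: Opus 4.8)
The plan is to verify the infinitesimal invariance criterion \eqref{criter-DI}: since $G$ is connected, it suffices to show $\pr{n+1}\mathbf{v}\big(D_x J/D_x I\big)=0$ for every $\mathbf{v}\in\lie$. First I would note that the ratio is a genuine differential function on an open dense subset of $J^{n+1}$. Without loss of generality take $I$ to have order exactly $n$ (if instead only $J$ does, replace the Tresse derivative by its reciprocal, which is a differential invariant precisely when $D_x J/D_x I$ is); then $D_x I=\partial_x I+\sum u_{\alpha,K,x}\,\partial I/\partial u_{\alpha,K}$ genuinely involves an order-$(n+1)$ coordinate $u_{\alpha,K,x}$ with $\#K=n$ on which $I$ actually depends, so $D_x I\not\equiv 0$ and in fact has order exactly $n+1$.

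The technical core is a commutation identity between prolonged vector fields and the total derivative $D_x$. Write $\mathbf{v}=\xi\gen x+\sum_{\alpha}\varphi_{\alpha}\gen{u_\alpha}$ and pass to its evolutionary representative $\mathbf{v}_Q=\sum_{\alpha}Q_{\alpha}\gen{u_\alpha}$, $Q_{\alpha}=\varphi_{\alpha}-\xi u_{\alpha,x}$, so that, as recalled in the excerpt, $\pr{k}\mathbf{v}=\pr{k}\mathbf{v}_Q+\xi D_x$ with $\pr{k}\mathbf{v}_Q=\sum_{\alpha}\sum_{J}(D_J Q_{\alpha})\gen{u_{\alpha,J}}$. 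Since the coefficients of $\pr{k}\mathbf{v}_Q$ are themselves total derivatives and total derivatives commute, one checks on the coordinate functions $x$, $u_{\alpha,J}$ that $\pr{k}\mathbf{v}_Q$ commutes with $D_x$; hence $[\pr{k}\mathbf{v},D_x]=[\xi D_x,D_x]=-(D_x\xi)D_x$. Consequently, for every differential function $F$ of order at most $m$,
\begin{equation*}
  \pr{m+1}\mathbf{v}(D_x F)=D_x\big(\pr{m}\mathbf{v}(F)\big)-(D_x\xi)(D_x F).
\end{equation*}

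With this in hand the conclusion is a two-line computation. Because $I$ and $J$ are invariants of order at most $n$, criterion \eqref{criter-DI} gives $\pr{n}\mathbf{v}(I)=\pr{n}\mathbf{v}(J)=0$, so the identity above (with $m=n$) yields
\begin{equation*}
  \pr{n+1}\mathbf{v}(D_x I)=-(D_x\xi)(D_x I),\qquad \pr{n+1}\mathbf{v}(D_x J)=-(D_x\xi)(D_x J).
\end{equation*}
Since $D_x I$ and $D_x J$ have order at most $n+1$ and $\pr{n+1}\mathbf{v}$ is a derivation, the quotient rule gives
\begin{equation*}
  \pr{n+1}\mathbf{v}\!\left(\frac{D_x J}{D_x I}\right)=\frac{\pr{n+1}\mathbf{v}(D_x J)\,D_x I-D_x J\,\pr{n+1}\mathbf{v}(D_x I)}{(D_x I)^2}=\frac{-(D_x\xi)(D_x J)(D_x I)+(D_x J)(D_x\xi)(D_x I)}{(D_x I)^2}=0.
\end{equation*}
As $\mathbf{v}\in\lie$ is arbitrary and $G$ is connected, $D_x J/D_x I$ is a differential invariant, and being a function on $J^{n+1}$ it is of order $n+1$.

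I expect the main obstacle to be the clean derivation of the commutation identity $[\pr{k}\mathbf{v},D_x]=-(D_x\xi)D_x$ — the evolutionary splitting $\pr{k}\mathbf{v}=\pr{k}\mathbf{v}_Q+\xi D_x$ reduces it to the fact that prolonged evolutionary fields commute with $D_x$, which itself rests on the commutativity of total derivatives — together with the bookkeeping of prolongation orders, i.e. that $\pr{n+1}\mathbf{v}$ still annihilates the order-$\le n$ functions $I,J$ while being exactly the operator that differentiates the order-$(n+1)$ functions $D_x I,D_x J$. The non-vanishing of $D_x I$, needed for the statement to be meaningful, is precisely where the hypothesis ``at least one of $I,J$ has order exactly $n$'' enters.
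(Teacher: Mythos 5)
The paper states Proposition \ref{prop} without proof (it only records the consequence that $\mathcal{D}=(D_xI)^{-1}D_x$ is an invariant operator), so there is no in-text argument to measure yours against; your proof is correct and is the standard one. The evolutionary splitting $\pr{k}\vf=\pr{k}\vf_Q+\xi D_x$ together with $[\pr{}\vf_Q,D_x]=0$ does give the identity $\pr{m+1}\vf(D_xF)=D_x\bigl(\pr{m}\vf(F)\bigr)-(D_x\xi)(D_xF)$ (it is exactly the recursion \eqref{recurs-p-q-1} read off coordinate functions), this exhibits $D_xI$ and $D_xJ$ as relative invariants of the same weight $-D_x\xi$, and the quotient rule finishes. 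One minor remark: your argument never uses the functional independence of $I$ and $J$, and the closing claim that the ratio ``is of order $n+1$'' because it lives on $J^{n+1}$ is the one place where that hypothesis (together with the exact-order assumption) would be needed if one insists that the Tresse derivative has order \emph{exactly} $n+1$; as a pure invariance statement, however, your proof is complete.
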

If $I(x,u^{(n)})$ is any given differential invariant, then $\mathcal{D}=(D_x I)^{-1}D_x$ is an invariant differential operator so that iterating on $\mathcal{D}$ one can generate an hierarchy $\mathcal{D}^{k}J$, $k=0,1,2,\ldots,$ of  higher order differential invariants.

If $z=I(x,u)$ and $w=J(x,u,u_1)$ are a complete set of functionally independent  invariants of the first prolongation $\pr{1}\lie$, i.e. they form the basis of $\mathcal{I}_1(\lie)$, then $I, J$ together with the derivatives $\mathcal{D}^{k}J=d^kw/dz^k$, $k=1,2,\ldots,n-1$ generate a complete set of functionally independent invariants for the prolonged algebra $\pr{n}\lie$ for $n\geq 1$.  They all satisfy the infinitesimal invariant condition $\pr{n}\mathbf{v}(\mathcal{D}^{k}J)=0$ for a vector field $\mathbf{v}\in\lie$.

\begin{example}\label{exa-seq-dif-inv}
  We know from Example \ref{pr-tr-ex} that the algebra of vector fields $\mathbf{v}=x^2\gen x+xu\gen u$ on $J^{0}(\mathbb{R},\mathbb{R})$ has the first order differential invariants $I=u/x$, $J=x u_1-u$, which satisfy $\mathbf{v}(I)=0$, $\pr{1}\mathbf{v}(J)=0$. The Tresse derivative $DJ/DI=J^{-1}(x^3 u_2)$ gives the second order differential invariant. We can take it as $J_2=J DJ/DI= x^3 u_2$ as it is needed only up to functional independence. Iterating the Tresse derivatives and multiplying by $J$ we obtain the sequence of all other differential invariants as $J_k=x^2 D_x J_{k-1}$, $k=3,4,\ldots$.
\end{example}

Determination of a complete set of functionally independent differential invariants (the basis $\mathcal{I}_n(\lie)$)  allows us to construct classes of differential equations with a prescribed symmetry algebra $\lie$.

\begin{theorem}\label{Thm-inv-charac}
 If the differential functions
$$I_1(x,u^{(n)}), I_2(x,u^{(n)}),\ldots,I_k(x,u^{(n)})\in \mathcal{I}_n(\lie)\subset J^n$$ form a set of functionally independent $n$-th order differential invariants  of $G$, then a system of $n$-th order differential equations are invariant under $G$ if and only if it can be written in terms of the differential invariants:
\begin{equation}\label{inv-charac}
  \mathsf{E}_{\nu}(x,u^{(n)})=H_{\nu}(I_1,I_2,\ldots, I_k)=0, \quad \nu=1,2,\ldots, N.
\end{equation}
\end{theorem}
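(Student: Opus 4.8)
The plan is to prove the two implications separately, treating the statement --- like the notions of symmetry group and differential invariant used throughout --- as local, i.e.\ valid in a neighbourhood of a point $z_0\in J^n$ that is generic both for the system (maximal rank of the Jacobian of $\mathsf{E}$, as assumed) and for the action of $G$ (the orbit through $z_0$ has the maximal dimension $d=\dim G-\dim G_0$ figuring in \eqref{number-DI}). Each genericity requirement cuts out an open dense subset of the relevant manifold, so such points are available; the delicate case where the equation variety $\mathcal{S}_{\mathsf{E}}$ happens to lie entirely in the non-generic locus of the $G$-action is a technicality I would dispatch by the usual closure/density arguments and will not dwell on here.

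The implication ``$\Leftarrow$'' is immediate. If $\mathsf{E}_{\nu}=H_{\nu}(I_1,\ldots,I_k)$ with each $I_j\in\mathcal{I}_n(\lie)$, then for every $\vf\in\lie$ the chain rule gives $\pr{n}\vf(\mathsf{E}_{\nu})=\sum_{j=1}^k(\partial H_{\nu}/\partial I_j)\,\pr{n}\vf(I_j)$, and every summand vanishes by the infinitesimal invariance criterion \eqref{criter-DI}. Hence $\pr{n}\vf$ annihilates each $\mathsf{E}_{\nu}$ identically on $J^n$, in particular on solutions, and since $G$ is connected Theorem~\ref{Thm-inf} shows $G$ is a symmetry group of the system $\mathsf{E}_{\nu}=0$.

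For ``$\Rightarrow$'' I would work at the level of the equation variety $\mathcal{S}_{\mathsf{E}}=\curl{(x,u^{(n)}):\mathsf{E}=0}\subset J^n$. By Theorem~\ref{Thm-inf} the prolonged field $\pr{n}\vf$ is tangent to $\mathcal{S}_{\mathsf{E}}$ along $\mathcal{S}_{\mathsf{E}}$ for every $\vf\in\lie$; integrating (using connectedness of $G$) the prolonged group preserves $\mathcal{S}_{\mathsf{E}}$ locally, so $\mathcal{S}_{\mathsf{E}}$ is a union of $G$-orbits. Near $z_0$, functional independence of $I_1,\ldots,I_k$ makes $I=(I_1,\ldots,I_k)\colon J^n\to\mathbb{R}^k$ a submersion whose fibres, since $k+d=\dim J^n$, are precisely the $G$-orbits (this is the local structure of a regular Lie group action, i.e.\ the Frobenius theorem applied to the orbit foliation, with a complete independent invariant set serving as transverse coordinates). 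Consequently the orbit through any $z\in\mathcal{S}_{\mathsf{E}}$ equals the fibre $I^{-1}(I(z))$, so $\mathcal{S}_{\mathsf{E}}$ is saturated with respect to $I$: $\mathcal{S}_{\mathsf{E}}=I^{-1}(\mathcal{Z})$ with $\mathcal{Z}:=I(\mathcal{S}_{\mathsf{E}})\subset\mathbb{R}^k$. The maximal-rank hypothesis forces $\mathcal{S}_{\mathsf{E}}$ to have codimension $N$, whence (since $\mathcal{S}_{\mathsf{E}}$ is a union of $d$-dimensional orbits one gets $N\le k$) $\mathcal{Z}$ has codimension $N$ in $\mathbb{R}^k$; by the constant-rank theorem $\mathcal{Z}$ is locally cut out by $N$ functions with independent differentials, $\mathcal{Z}=\curl{H_{\nu}(I_1,\ldots,I_k)=0}$, and pulling back by $I$ yields $\mathsf{E}_{\nu}(x,u^{(n)})=0\iff H_{\nu}(I_1,\ldots,I_k)=0$, the asserted representation.

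The main obstacle is the ``$\Rightarrow$'' direction, and within it the geometric input that a $G$-invariant subvariety near a generic point is a union of level sets of a complete set of invariants: this rests on the local theory of regular Lie group actions (orbits foliate a neighbourhood, a complete functionally independent invariant set gives transverse coordinates) together with careful bookkeeping that symmetry of the differential equation gives invariance only of the vanishing set $\mathcal{S}_{\mathsf{E}}$, not of the defining functions $\mathsf{E}_{\nu}$. The passage from ``saturated variety'' back to an explicit system $H_{\nu}(I)=0$ of the correct size $N$ is exactly where the regularity (maximal-rank) assumption on $\mathsf{E}$ is indispensable.
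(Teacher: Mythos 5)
The paper states Theorem~\ref{Thm-inv-charac} without proof, so there is no in-text argument to compare against; your proposal is, in substance, the standard proof found in \cite{Olver1993} (the treatment of invariant systems via a complete set of functionally independent invariants on a regular orbit foliation), and it is correct at the level of rigor of the surrounding text. The ``$\Leftarrow$'' direction via the chain rule and \eqref{criter-DI} is exactly right and explains the parenthetical remark about strong invariance. For ``$\Rightarrow$'' your decomposition --- (i) infinitesimal invariance implies tangency of $\pr{n}\vf$ to $\mathcal{S}_{\mathsf{E}}$, hence $\mathcal{S}_{\mathsf{E}}$ is a union of local orbits; (ii) near a generic point the invariants give a submersion whose fibres are the orbits, so $\mathcal{S}_{\mathsf{E}}$ is saturated; (iii) the image $\mathcal{Z}$ is a codimension-$N$ submanifold cut out by $N$ functions $H_\nu$ --- is the correct skeleton, and you rightly emphasize that only the variety, not the defining functions $\mathsf{E}_\nu$, is preserved, so the conclusion is an equivalent rewriting of the system. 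Two implicit hypotheses deserve explicit mention, both standard and both suppressed by the paper as well: first, the passage from Theorem~\ref{Thm-inf} (vanishing of $\pr{n}\vf(\mathsf{E}_\nu)$ on \emph{solution jets}) to tangency along all of $\mathcal{S}_{\mathsf{E}}$ requires local solvability of the system, i.e.\ that every point of $\mathcal{S}_{\mathsf{E}}$ is the $n$-jet of some solution (or at least that such points are dense in $\mathcal{S}_{\mathsf{E}}$); second, the identification of $I$-fibres with orbits requires regularity of the prolonged action near $z_0$ and, strictly, one should work with connected components of level sets in a foliation chart. With those caveats flagged --- as you essentially do --- the argument is complete.
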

Invariant equations obtained in this way are called strongly invariant ($\pr{n}H_{\nu}(I_1,\ldots,I_k)=0$ is satisfied everywhere).


\begin{example}\label{2nd-ODE-build}
We find all second order equations invariant under the abelian subgroup of the projective group $\SL(3,\mathbb{R})$ generated by
$$\mathbf{v}_1=x^2\gen x+xy \gen y,  \quad \mathbf{v}_2=xy\gen x+y^2 \gen y.$$ From Example \ref{exa-seq-dif-inv} we  know the following set of second order differential invariants
$$I_1=\frac{y}{x}, \quad I_2=y-xy_1,  \quad I_3=x^3y_2.$$ The second prolongation of $\mathbf{v}_2$ is
$$\pr{2}\mathbf{v}_2=\mathbf{v}_2+y_1 (y-xy_1)\gen {y_1}-3xy_1 y_2\gen {y_2}.$$
Imposing the condition $\pr{2}\mathbf{v}_2(H)=0$ and changing to the invariants as new coordinates we find that $H(I_1,I_2,I_3)$ satisfies
$$I_2^2 \frac{\partial H}{\partial I_2}+3I_2I_3\frac{\partial H}{\partial I_3}=0.$$

Solving this PDE by the method of characteristics, it follows that there are two independent invariants  $I=I_1=y/x$ and $J=I_3I_2^{-3}=x^3(y-xy_1)^{-3}y_2$. The most general equation can now be written as
\begin{equation}\label{2nd-inv-Eq}
 y_2=x^{-3}(y-xy_1)^{3}G\left(\frac{y}{x}\right),
\end{equation}
where $G$ is an arbitrary function.

The rectifying coordinates for $\mathbf{v}_1$ to $\mathbf{\tilde{v}}_1=\gen s$ are $r=y/x$ (an invariant) and $s=-1/x$. In terms of $r, s$, $\mathbf{v}_2$ gets transformed to $\mathbf{\tilde{v}}_2=r\gen s$. The  invariant equation corresponding to the abelian algebra $\curl{\gen s,r\gen s}$ (compare with the canonical realization $A_{2,2}$ of \eqref{two-dim-cano-abelian}) is a linear one $d^2s/dr^2=G(r)$. We conclude that the same transformation  linearizes Eq. \eqref{2nd-inv-Eq}.

If we replace $\mathbf{v}_2$ by $\mathbf{v}_2=x\gen x+ky\gen y$, this time $H(I_1,I_2,I_3)$ has to satisfy the zero-degree quasi-homogeneous function PDE
$$(k-1)I_1\frac{\partial H}{\partial I_1}+kI_2\frac{\partial H}{\partial I_2}+(k+1)I_3\frac{\partial H}{\partial I_3}=0.$$ Integrating the characteristic equations of this PDE  we obtain the invariants $I=I_1^{k/(1-k)}I_2$, $J=I_1^{(k+1)/(1-k)}I_3$, $k\ne 1$, while for $k=1$, we have $I=I_1$ and $J=I_3 I_2^{-2}$. In the former case, the invariant equation will have the form $J=G(I)$, or
$$y_2=x^{-(n+3)}y^nG(I),  \quad I=\left(\frac{y}{x}\right)^{-(n+1)/2}(y-xy_1),  \quad k=\frac{n+1}{n-1}.$$  In this case, the algebra belongs to the nonabelain realization $A_{2,3}$ of \eqref{two-dim-cano-nonabelian}.  For the specific choice of $G=K=\text{const.}$ it reduces to the celebrated Emden--Fowler equation (see also \eqref{EF-eq}).

In the latter case, we have the nonabelian algebra of linearly connected (or rank-one) vector fields with $\mathbf{v}_1=x(x\gen x+y\gen y)=x\mathbf{v}_2$, which is the $A_{2,4}$ \eqref{two-dim-cano-nonabelian} canonical form $\curl{\gen s, s\gen s}$, up to change of coordinates $r=y/x$, $s=-1/x$. The corresponding invariant equation is
$$y_2=(y-xy_1)^2 F\left(\frac{y}{x}\right).$$ Changing to new coordinates $(r,s)$ linearizes this equation to
$$\frac{d^2 s}{dr^2}+F(r)\frac{d s}{dr}=0.$$
\end{example}

\begin{example}
 We construct all third order ODEs invariant under the solvable group $\E(2)$ of rigid motions in the plane (isometries of the Euclidean space $\mathbb{R}^2$) of the $\e(2)$ algebra of symmetries composed of translations along $x$ and $y$ axes and the planar rotations
\begin{equation}\label{e2-basis}
  \mathbf{v}_1=\gen x,  \quad \mathbf{v}_2=\gen y, \quad \mathbf{v}_3=-y\gen x+x \gen y
\end{equation}
with non-zero commutators
$$[\mathbf{v}_1,\mathbf{v}_3]=\mathbf{v}_2,  \quad [\mathbf{v}_2,\mathbf{v}_3]=-\mathbf{v}_1.$$
Prolongations of $\mathbf{v}_1$ and $\mathbf{v}_2$ do not alter their local form, but the third order prolongation of $\mathbf{v}_3$ is given by
$$\pr{3}\mathbf{v}_3=\mathbf{v}_3+(1+y_1^2)\gen {y_1}+3y_1y_2\gen {y_2}+(4y_1y_3+3y_2^2)\gen {y_3}.$$
We note that the Euclidean group $\E(2)$ has no ordinary invariants on the space $(x,y)$ ($k=n+2-m=2-2=0$), nor differential invariant of the first order ($k=1+2-3=0$) because the group acts transitively on $(x,y)$ and $(x,y,y_1)$,  but there are differential invariants of order $\geq 2$. We can not use invariant differentiation process to find higher order ones. If $I(x,y,y_1,y_2,y_3)$ is a differential invariant of order three of $\e(2)$, then $\pr{3}\mathbf{v}_1(I)=0$, $\pr{3}\mathbf{v}_2(I)=0$, $\pr{3}\mathbf{v}_3(I)=0$. From the first two equations, $I$ must be independent of $x$ and $y$ coordinates, namely $I(y_1,y_2,y_3)$.
We solve the characteristic system to find the two independent differential invariants, satisfying $\pr{3}\mathbf{v}_3(I)=0$.
The characteristic system is given by
\begin{equation}\label{char-sys}
  \frac{dy_1}{1+y_1^2}=\frac{dy_2}{3y_1y_2}=\frac{dy_3}{4y_1y_3+3y_2^2}.
\end{equation}
From the first characteristic equation, a second order invariant is $\kappa=(1+y_1^2)^{-3/2}y_2$ (the curvature). The other one is obtained by replacing $y_2$ by $\kappa(1+y_1^2)^{3/2}$, with $\kappa$ treated constant, in the last term and then integrating the first order linear ODE
$$\frac{dy_3}{dy_1}-\frac{4y_1}{1+y_1^2}y_3=3 \kappa^2(1+y_1^2)^2.$$
This provides us the differential invariant $$\zeta(y_1,y_2,y_3)=\kappa^{-2}y_3(1+y_1^2)^{-2}-3y_1=(1+y_1^2)y_2^{-2}y_3-3y_1$$
so that $\kappa$ and $\zeta$ form a basis of third order invariants of $\e(2)$ (a set of functionally independent invariants). The invariant equation now can be written as
\begin{equation}\label{3rd-ODE}
  (1+y_1^2)y_3-3y_1y_2^{2}=y_2^{2}H(\kappa),
\end{equation}
where $H$ is any smooth function of the curvature.

In view of proposition \ref{prop}, higher order differential invariants and invariant ODEs can be constructed using the Tresse derivatives of  $\kappa$ and $\zeta$. For instance, the ratio $D_x \zeta/D_x \kappa$ gives a fourth order invariant.

\end{example}

Third order ordinary differential equations invariant under three-di\-men\-si\-onal solvable algebras can be constructed in a similar manner. All such algebras with a two-dimensional abelian ideal of rank-two will be realized by the vector fields
\begin{equation}\label{sol-3-dim}
  \mathbf{v}_1=\gen x, \quad  \mathbf{v}_1=\gen y,  \quad  \mathbf{v}_3=(ax+cy)\gen x+(bx+dy)\gen y,
\end{equation}
satisfying the commutation relations
\begin{equation}\label{comm-solvable}
[\mathbf{v}_1,\mathbf{v}_2]=0,  \quad [\mathbf{v}_1,\mathbf{v}_3]=a \mathbf{v}_1+b \mathbf{v}_2,  \quad [\mathbf{v}_2,\mathbf{v}_3]=c \mathbf{v}_1+d \mathbf{v}_2,
\end{equation}
where the matrix
$$A=\begin{pmatrix}
      a & b \\
      c & d
    \end{pmatrix}$$
can be transformed into one of the following Jordan canonical forms
$$
\begin{pmatrix}
      1 & 0 \\
      0 & p
\end{pmatrix}
,\quad -1\leq p\leq 1, \quad
\begin{pmatrix}
      0 & 0 \\
      1 & 0
\end{pmatrix}
,\quad
\begin{pmatrix}
      q & -1 \\
      1 & q
\end{pmatrix}
\quad q\geq 0.
$$

A different algebra is obtained for each canonical form. For example, the third canonical form for $q=0$ ($a=d=0$, $b=-c=1$) corresponds to the algebra $\e(2)$ as discussed in the above example. The differential invariants of order three, $I(y_1,y_2,y_3)$,  can be found from the condition $\pr{3}\mathbf{v}_3(I)=0$, namely from integrating the first order homogeneous PDE
\begin{equation}\label{3rd-order-i}
[b+(d-a)y_1-cy_1^2]I_{y_1}+
(d-2a-3cy_1)y_2I_{y_2}+[-3cy_2^2+(d-3a-4cy_1)y_3]I_{y_3}=0.
\end{equation}

The second canonical form ($a=b=d=0$, $c=1$) is the nilpotent algebra with the center $\curl{\mathbf{v}_1}$. This means that we are dealing with the solvable algebra $$\mathbf{v}_3=\gen x,  \quad \mathbf{v}_2=\gen y,  \quad \mathbf{v}_3=y\gen x.$$
Calculating differential invariants from
$$\left[y_1^2\gen{y_1}+3y_1y_2\gen{y_2}+(3y_2^2+4y_1y_3)\gen{y_3}\right]I=0$$
we find the third order  invariant equation
$$y_1y_3=3y_2^2+y_1^5 H(\tau),  \quad \tau=y_2y_1^{-3},$$ where $H$ is an arbitrary function of its argument.

This equation and  other third order invariant equations can be integrated using three consecutive quadratures (see Example \ref{3rd-ord-quadrature} for the integration procedure).

\section{Reduction of order for ordinary differential equations}

\begin{theorem}
  Let the scalar ODE
$$\mathsf{E}(x,y^{(n)})=\mathsf{E}(x,y,y_1,\ldots,y_n)=0, \quad \frac{\partial \mathsf{E}}{\partial y_n}\ne 0$$ admit a one-parameter symmetry group $G$ generated by $\mathbf{v}\in \lie$. All nontangential solutions ($\mathbf{v}\in \lie$ is nowhere tangent to the graph of the solution) can be found by quadrature from the solutions to a reduced ODE $(\mathsf{E}/G)(x,y^{(n-1)})$.
\end{theorem}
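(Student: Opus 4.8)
The plan is to reduce the problem to the model case in which the generator is a pure translation. First I would invoke the rectification theorem stated above: since $\mathbf{v}$ is assumed nowhere tangent to the solution graph, in particular it is nonvanishing there, so there are local coordinates $(r,s)$, with $r=\eta_1(x,y)$, $s=\eta_2(x,y)$, in which $\mathbf{v}=\gen s$. Because $\gen s$ is transverse to the solution graph, that graph is still locally a graph $s=\phi(r)$ over the new independent variable $r$, so that $w:=ds/dr$ is well defined along it. Next, for $\mathbf{v}=\gen s$ one has $\xi=0$, $\varphi=1$, hence the characteristic is $Q\equiv1$, and formula \eqref{coeff-Q-pro} gives $\varphi^{J}=D_{J}Q=0$ for every $J\neq\varnothing$; therefore the prolongation is again a translation, $\pr{n}\mathbf{v}=\gen s$ on $J^{n}$.

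Now write the transformed equation as $\tilde{\mathsf E}(r,s,s_{r},\dots,s^{(n)})=0$; a point change of variables preserves regularity and the condition $\partial\tilde{\mathsf E}/\partial s^{(n)}\neq0$, so it is still a regular $n$-th order ODE. The infinitesimal criterion of Theorem~\ref{Thm-inf} (equivalently \eqref{inv-criter-2}) now reads $\pr{n}\mathbf{v}(\tilde{\mathsf E})=\partial\tilde{\mathsf E}/\partial s=0$ on the solution variety, and using the maximal-rank assumption one argues that the $\gen s$-invariant variety $\curl{\tilde{\mathsf E}=0}$ is cut out by a function with no explicit $s$, i.e. $\tilde{\mathsf E}=\tilde{\mathsf E}(r,s_{r},\dots,s^{(n)})$. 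Substituting $w=s_{r}$, so that $s^{(k)}=d^{\,k-1}w/dr^{\,k-1}$ for $k\ge1$, turns this into an ODE of order $n-1$,
\[
  (\mathsf E/G):\qquad \tilde{\mathsf E}\bigl(r,w,w',\dots,w^{(n-1)}\bigr)=0,
\]
still with nonvanishing leading coefficient. Given a general solution $w=w(r;c_1,\dots,c_{n-1})$ of $(\mathsf E/G)$, a single quadrature $s=\int w\,dr+c_n$ recovers a solution in the coordinates $(r,s)$, and inverting the coordinate change by \eqref{vf-new-coord} returns it in $(x,y)$; the $n$ integration constants match the order of $\mathsf E$, and conversely every nontangential solution is obtained in this way because all the steps are reversible on the domain where the straightening map is a diffeomorphism.

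The step I expect to be the main obstacle is passing from ``$\pr{n}\mathbf{v}(\tilde{\mathsf E})=0$ on solutions'' to ``$\tilde{\mathsf E}$ may be chosen literally independent of $s$'': this is exactly where the regularity/maximal-rank hypothesis on $\mathsf E$ must be used, and it has to be done while keeping $\partial\tilde{\mathsf E}/\partial s^{(n)}\neq0$, so that the reduced equation is genuinely of order $n-1$ and the count of arbitrary constants is preserved. The nontangentiality hypothesis plays only the supporting role of guaranteeing that the transformed solution remains graphical over $r$, so that the substitution $w=ds/dr$ is legitimate along it.
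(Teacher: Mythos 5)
Your proof follows essentially the same route as the paper's: rectify $\mathbf{v}$ to $\gen s$, observe that $\pr{n}\gen s=\gen s$ so the transformed equation can be taken independent of $s$, substitute $w=ds/dr$ to drop the order by one, and recover $s$ by a single quadrature. Your additional care about the passage from ``annihilated on solutions'' to ``literally independent of $s$'' (via the maximal-rank hypothesis) and about nontangentiality keeping the solution graphical over $r$ goes beyond what the paper writes out, but it fills gaps rather than changing the argument.
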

\begin{proof}
  If we introduce rectifying (canonical or normal) coordinates $r=r(x,y)$ and $s=s(x,y)$ in which $\mathbf{v}$ generates a group of translation $r\to r$, $s\to s+\varepsilon$ with the corresponding normal form $\widehat{\mathbf{v}}=\gen s$.  Its prolongation $\pr{n}\mathbf{\widehat{v}}=\gen s$ and therefore the derivatives in the new coordinates remain unchanged so that from the invariance condition \eqref{inv-criter} it follows that the equation in normal form should be independent of the variable $s$, but  can depend on the derivatives. Therefore we have reduced our equation to one of order $n-1$, $(\mathsf{E}/G)(r,s',\ldots, s^{(n-1)})$ for the derivative $z=\vartheta(r)=ds/dr=s'(r)$. Once we know the solution of the reduced equation, the solution to the original one is obtained by a quadrature $s=\int \vartheta(r)dr$.
\end{proof}
The rectifying coordinates $r, s$ are constructed as solutions of the partial differential equations $\mathbf{v}(r)=0$, $\mathbf{v}(s)=1$. Note that the coordinate $r$ is an invariant of $\mathbf{v}(r)$ and  $r, s$ can be replaced by any arbitrary functions of $r$ and $s$.
For a first order equation $y'=F(x,y)$ admitting the symmetry generated by $\mathbf{v}=\xi(x,y)\gen x+\eta(x,y)\gen y$,  the determining equation is
\begin{equation}\label{det-eq-first-ODE}
  \xi F_x+\eta F_y=(\eta_y-\xi_x)F- \xi_y F^2+\eta_x.
\end{equation}
This equation admits the solution $\eta=F \xi$. But the corresponding vector fields $\mathbf{v}=\xi(x,y)(\gen x+F(x,y)\gen y)$  are everywhere tangential to solutions and do not serve our purpose for reduction because finding  canonical  coordinates equally require integrating the equation itself. Other than these trivial symmetries, the above determining equation can allow particular solutions for given $F$ leading to one-parameter symmetry groups. Transforming  to canonical coordinates  reduces the equation to quadrature. It is quite straightforward to see that, if $\eta-F \xi\ne 0$, the infinitesimal symmetry condition \eqref{det-eq-first-ODE} can be re-expressed as
$$\mu_x+(\mu F)_y=0,  \quad \mu(x,y)=(\eta-F \xi)^{-1}, $$
which implies the existence of an integrating factor $\mu(x,y)$ of the equation.

In practise it is more feasible to solve the inverse problem of constructing the most general first order ODEs admitting a given  group as a symmetry group. The same problem for higher order equations are equally useful.

\begin{example}\label{fiber-preser}

We consider the following one-parameter local group of transformation (a special form of the so-called fiber-preserving transformation in which the changes in $x$ are not affected by the dependent variable $y$)
\begin{equation}\label{tr-1}
 \Phi_{\varepsilon}(x,y):\quad \tilde{x}=X(x;\varepsilon),  \quad \tilde{y}=Y(x;\varepsilon)y
\end{equation}
with the infinitesimal generator
\begin{equation}\label{inf-1}
  \mathbf{v}=\xi(x)\gen x+\eta(x)y\gen y,
\end{equation}
and its prolongation
$$\pr{1}\vf=\vf+[\eta' y+(\eta-\xi')y']\gen{y'}.$$
Solving the characteristic equations of the first order PDE $\pr{1}\vf(I)=0$
$$\frac{dx}{\xi(x)}=\frac{dy}{\eta(x)y}=\frac{dy'}{\eta' y+(\eta-\xi')y'}$$
we find the first order fundamental invariants
\begin{equation}\label{inv}
  r(x,y)=\nu(x)y,  \quad w(x,y,y')=\nu(x)(\xi y'-\eta y),  \quad \nu(x)=\exp\curl{-\int^x\frac{\eta(t)}{\xi(t)}dt}.
\end{equation}
From Theorem \ref{Thm-inv-charac}, the corresponding ODE can be expressed in terms of invariants in the form $F(r,w)=0$ or $w=H(r)$, more precisely
\begin{equation}\label{inv-ode}
  y'-\frac{\eta(x)}{\xi(x)}y=\frac{H(\nu(x)y)}{\nu(x)\xi(x)},
\end{equation}
where $F$ and $H$ are arbitrary functions of their arguments. In terms of canonical coordinates it has the form, which is independent of $s$ (invariant under the translations $s\to s+\varepsilon$),
\begin{equation}\label{inv-cano}
  \frac{dr}{ds}=H(r),  \quad w=\frac{dr}{ds},  \quad s=\int^x\frac{dt}{\xi(t)}.
\end{equation}
This means  that Eq. \eqref{inv-cano} can be integrated by quadrature (separation of variables).
This equation involves interesting types of ODEs like Abel equation of the second type or Riccati equation.

For example, if we choose $\xi$ arbitrary, $\eta=-1$ and $H(\tau)=A+B \tau^{-1}+\tau$, $A,B\ne 0$ we have the Abel equation
\begin{equation}\label{Abel}
  yy'=\frac{A}{\xi\nu}y+\frac{B}{\xi\nu^2}
\end{equation}
with the one-parameter symmetry group generated by $\mathbf{v}=\xi(x)\gen x-y\gen y$. In terms of $r,s$, its general solution in implicit form is given by the quadrature
\begin{equation}\label{quadrat}
 \int\frac{rdr}{r^2+Ar+B}=s+c,
\end{equation}
where $c$ is an integration constant. If $\Delta=A^2-4B<0$, it is useful to use
$$\int\frac{rdr}{r^2+Ar+B}=\frac{1}{2}\ln (r^2+Ar+B)-\frac{A}{2}\int\frac{dr}{r^2+Ar+B}$$
together with the formula \eqref{indef-int}  for  the integral on the right side.

For the choice $\xi(x)=x$, $A=1$, $B=-2$ ($\Delta=9>0$), Eq. \eqref{Abel} specializes to
$$yy'=\frac{1}{x^2}y-\frac{2}{x^3}$$
with solution
$$(xy+2)^2(xy-1)=k x^3.$$

The choice $H(\tau)=A \tau^2+B \tau+C$ leads to a Riccati equation depending on two arbitrary functions and three parameters
\begin{equation}\label{Riccati-gen}
  y'=a(x)y^2+b(x)y+c(x),
\end{equation}
where
\begin{equation}\label{Riccati-gen-coeff}
  a(x)=\frac{A\nu}{\xi},   \quad b(x)=\frac{\eta+B}{\xi},  \quad c(x)=\frac{C}{\nu\xi}, \quad \xi\ne 0.
\end{equation}
This equation is  invariant under the symmetry \eqref{inf-1}.
Conversely, one can show that the most general Riccati equation of the form \eqref{Riccati-gen} invariant under the group \eqref{tr-1} should have the coefficients given by \eqref{Riccati-gen-coeff}. In \eqref{Riccati-gen-coeff}, choosing, in particular,
$a(x)=1, b(x)=0$, $\xi=A+Bx$, the following normal form of \eqref{Riccati-gen-coeff} is obtained
\begin{equation}\label{Riccati-normal}
  y'=y^2+c(x),  \quad c(x)=\frac{C}{A}\nu^{-2}=\frac{C}{A}(A+Bx)^{-2}
\end{equation}
admitting the one-dimensional symmetry algebra $\mathbf{v}=(A+Bx)\gen x-By\gen y$.

From its separable form in coordinates $r, s$ (namely, Eq. \eqref{inv-cano}), it follows the following solution
\begin{equation}\label{gen-sol}
  \int\frac{dr}{Ar^2+Br+C}=s+c.
\end{equation}
The integral on the left side depends on the discriminant $\Delta=B^2-4AC$
\begin{equation}\label{indef-int}
\begin{cases}
  \frac{-2}{\sqrt{\Delta}}\arctanh\frac{2Ar+B}{\sqrt{\Delta}}, &  \Delta>0, \;\;\\
  \frac{-2}{2Ar+B}, &  \Delta=0, \;\;\\
  \frac{2}{\sqrt{-\Delta}}\arctan\frac{2Ar+B}{\sqrt{-\Delta}}, &  \Delta<0.
\end{cases}
\end{equation}
The particular choice $\xi=1$, $\eta=\alpha=\text{const.}$  ($\nu=e^{-\alpha x}$) leads to the Riccati equation
\begin{equation}\label{Riccati-ABC}
  y'=Ae^{-\alpha x}y^2+(B+\alpha) y+Ce^{\alpha x}
\end{equation}
admitting the symmetry group $(x,y)\to (x+\varepsilon, e^{\alpha \varepsilon}y)$ generated by $\mathbf{v}=\gen x+\alpha y\gen y$. This equation can be replaced by
\begin{equation}\label{Riccati-ABC-2}
  y'=Ae^{-\alpha x}y^2+B y+Ce^{\alpha x}
\end{equation}
by redefining the parameter $B$ as $B-\alpha$.

The other choice $\xi=x^2$, $\eta=x$ ($\nu=1/x$) gives the Riccati equation
\begin{equation}\label{Riccati-ABC-3}
  y'=\frac{A}{x^3}y^2+\frac{B+x}{x^2}y+\frac{C}{x}
\end{equation}
invariant under the projective symmetry group generated by $\mathbf{v}=x^2\gen x+xy\gen y$ (see Example \ref{pr-tr-ex}).

In the case $\Delta<0$, the general solution of \eqref{Riccati-gen} from \eqref{gen-sol} has the explicit form
$$y=\frac{1}{2A\nu}\curl{\sqrt{-\Delta}\tan\left[\frac{\sqrt{-\Delta}}{2}(s+c)\right]-B},$$
in the case $\Delta=0$
$$y=-\frac{B(s+c)+2}{2A\nu(s+c)}, \quad s=-\frac{1}{x}.$$
\end{example}

\begin{remark}\label{rmk-Lie}
Lie showed that the Lie symmetry algebra of a second order ODE can be of dimension $\dim\lie=0,1,2,3,8$. The maximum dimension is attained if and only if and only if it can be mapped by a point transformation to the canonical linear equation $y''=0$, which admits a symmetry group isomorphic to the  $\SL(3,\mathbb{R})$ group, acting as the group of projective transformations of the Euclidean plane with coordinates $(x,y)$. Moreover, he classified equations with point symmetries into equivalence classes under the action of the infinite dimensional group $\Diff(2,\mathbb{C})$ of all local diffeomorphisms of a complex plane $\mathbb{C}$.
\end{remark}

\begin{remark}\label{Lie-canonical}
There are two isomorphism classes of two-dimensional Lie algebras exist (over $\mathbb{R}$ and over $\mathbb{C}$): abelian and nonabelian
\begin{equation}\label{isomorf-classes}
  [\mathbf{v}_1,\mathbf{v}_2]=0,  \qquad [\mathbf{v}_1,\mathbf{v}_2]=\mathbf{v}_1.
\end{equation}

Their realizations in the plane are summarized as
\begin{equation}\label{two-dim-cano-abelian}
    A_{2,1}: \quad \mathbf{v}_1=\gen x,  \quad \mathbf{v}_2=\gen y, \quad A_{2,2}: \quad \mathbf{v}_1=\gen y,  \quad \mathbf{v}_2=x\gen y,
\end{equation}
\begin{equation}\label{two-dim-cano-nonabelian}
    A_{2,3}: \quad \mathbf{v}_1=\gen y,  \quad \mathbf{v}_2=x\gen x+y\gen y, \quad A_{2,4}: \quad \mathbf{v}_1=\gen y,  \quad \mathbf{v}_2=y\gen y.
\end{equation}
They can be obtained by the following argument.

{\bf Abelian case}:
We first assume the algebra $\lie=\curl{\mathbf{v}_1,\mathbf{v}_2}$ has rank 1. We  start by assuming that $\mathbf{v}_1$ is in some canonical form,  say $\mathbf{v}_1=\gen y$ and $\mathbf{v}_2$ in its generic form
$$\mathbf{v}_2=\xi(x,y)\gen x+\eta(x,y)\gen y.$$
Then from the first commutation relation of \eqref{isomorf-classes} we have
$\mathbf{v}_2=\eta(x)\gen y.$ The transformation preserving $\mathbf{v}_1$, but changing $\mathbf{v}_2$ (equivalence group) is given by
\begin{equation}\label{equiv-group}
  \tilde{x}=f(x), \quad \tilde{y}=y+g(x), \quad f'\ne 0.
\end{equation}
Under such a transformation, we find $\tilde{\mathbf{v}}_2=\eta(x)\gen {\tilde{y}}$. We simply choose $f(x)=\eta(x)$ and obtain the $A_{2,2}$ algebra.

Now assume that the rank of $\lie$ is 2. Then we must have
$$\mathbf{v}_2=\xi(x)\gen x+\eta(x)\gen y,  \quad \xi(x)\ne 0.$$
Then $\mathbf{v}_2$ is transformed to
$$\tilde{\mathbf{v}}_2=\xi f'\gen {\tilde{x}}+(\xi g'+\eta)\gen {\tilde{y}}.$$ We choose $\xi f'=1$ and $\xi g'+\eta=0$ so that we have $\tilde{\mathbf{v}}_2=\gen {\tilde{x}}$, which gives $A_{2,1}$ (in the reverse order).

{\bf Non-abelian case}: First, $\rank\lie=1$ realization. In this case, from the second commutation relation we find
$\mathbf{v}_2=(y+\eta(x))\gen y$. The transformation \eqref{equiv-group} takes $\mathbf{v}_2$ to
$\tilde{\mathbf{v}}_2=(\tilde{y}+\eta-g)\gen {\tilde{y}}$. The choice $g=\eta(x)$ leads to the $A_{2,4}$ realization of the algebra.

Finally, we consider the $\rank\lie=2$ realization. In this case, we find
$$\mathbf{v}_2=\xi(x)\gen x+(y+\eta(x))\gen y, \quad \xi(x)\ne 0.$$ Again, using \eqref{equiv-group} with $f$ and $g$ satisfying
$$\xi f'=f,  \quad \xi g'-g+\eta=0$$ we arrive at $A_{2,3}$.

In the terminology of Lie, for the realizations $A_{2,2}$, $A_{2,4}$, the vector fields $\mathbf{v}_1$ and $\mathbf{v}_2$ are called linearly connected, for $A_{2,1}$, $A_{2,3}$, linearly unconnected.

The corresponding canonical invariant equations are
\begin{equation}\label{abelian-inv-eqs}
  y''=F(y'), \qquad y''=F(x)
\end{equation}
for $A_{2,1}$, and  $A_{2,2}$,  and
\begin{equation}\label{nonabelian-inv-eqs}
  x y''=F(y'),  \qquad  y''=F(x)y'
\end{equation}
for $A_{2,3}$, and  $A_{2,4}$, respectively.

Any second order ODE with a  symmetry group $G$ of dimension $\geq 2$ (a nonlinear second order ODE can be invariant at most under a three-dimensional Lie group) can be integrated by two quadratures except for the rotation group $\Ort(3,\mathbb{R})$ when $\dim G=3$, which has no two-dimensional subgroup. In the latter case, there is a method based on first integrals to find the general solution without integration, namely by purely algebraic manipulations (see \cite{Hydon2000, Stephani1989}).

\end{remark}

\begin{example}\label{method-first-int}
  We construct the most general second order ODE invariant under the rotation group $\Ort(3,\mathbb{R})$. The Lie algebra $\ort(3,\mathbb{R})$ acting on the surface of the unit  sphere $\mathbb{S}^2=\curl{(x,y): x\in [0,2\pi),y\in (0,\pi)}\subset \mathbb{R}^3$ (See Remark \ref{rank-2-realiz}) is realized by the vector fields
\begin{equation}\label{realiz-on-S2}
  \begin{aligned}
&\mathbf{v}_1=\gen x,\\
&\mathbf{v}_2=\cos x\cot y\gen x+\sin x\gen y,\\
&\mathbf{v}_3=-\sin x\cot y\gen x+\cos x\gen y
\end{aligned}
\end{equation}
with the commutation relations
\begin{equation}\label{comm-so3}
  [\mathbf{v}_1,\mathbf{v}_2]=\mathbf{v}_3, \quad [\mathbf{v}_2,\mathbf{v}_3]=\mathbf{v}_1, \quad [\mathbf{v}_3,\mathbf{v}_1]=\mathbf{v}_2.
\end{equation}
Obviously,  invariant function  will be of the form $I(y,y_1,y_2)$ because of $$\pr{2}\mathbf{v}_1(I)=\mathbf{v}_1(I)=0.$$
We solve the PDE $\pr{2}\mathbf{v}_2(I)=0$. Splitting it with respect to the set $\curl{\cos x, \sin x}$ we find
\begin{equation}\label{two-ode}
  \begin{split}
     & \tan y I_y+y_1 I_{y_1}-(\tan y+2\sec y \csc y y_1^2-y_2)I_{y_2}=0, \;\; \\
     & (1+\csc^2 y)I_{y_1}+y_1(\cot y-2\cot y \csc^2 y y_1^2+3\csc^2 y_2)I_{y_2}=0.
   \end{split}
\end{equation}
A first integral is immediately obtained as $\tau=(\csc y) y_1$. The second independent integral is found by integrating the linear ODE
$$\frac{dy_2}{dy}-2(\cot y) y_2=-(1+2\tau^2)$$ as
$$\omega=(\csc^2 y) y_2-(1+2\tau^2)\cot y.$$ Expressing the second PDE of \eqref{two-ode} in terms of $\tau, \omega$ gives
$$(1+\tau^2)I_{\tau}+3\tau \omega I_{\omega}=0$$ possessing the single  first integral $\zeta=\omega(1+\tau^2)^{-3/2}$. The condition  $\pr{2}\mathbf{v}_3(\zeta)=0$ is automatically satisfied because
$$\pr{2}\mathbf{v}_3=\pr{2}[\mathbf{v}_1,\mathbf{v}_2]=[\pr{2}\mathbf{v}_1,\pr{2}\mathbf{v}_2].$$
Finally, a single invariant equation depending on an arbitrary parameter $\nu$ is found from the relation $\zeta(y,y_1,y_2)=\nu=\text{constant}$. In terms of the spherical angle variables (azimuth and polar angles) $(x,y)$ it can be written in the form
\begin{equation}\label{so3-inv-ode}
  y_2-2(\cot y) y_1^2-\sin y \cos y=\nu \sin^2 y \left(1+(\csc^2 y) y_1^2\right)^{3/2}.
\end{equation}
The special case $\nu=0$ is the equation of the geodesics on the unit sphere
\begin{equation}\label{geodesic-ode}
  y_2-2(\cot y) y_1^2-\sin y \cos y=0.
\end{equation}
In order to solve this equation, we can use the algorithmic method of first integrals based on symmetries suggested in \cite{Hydon2000} (see \cite{Stephani1989} for an alternative integration strategy). We first need the characteristic functions of the vector fields $\mathbf{v}_1,\mathbf{v}_2,\mathbf{v}_3$
\begin{equation}\label{char-so3}
  \begin{split}
     & Q_1=-y_1,  \quad Q_2=\sin x -\cos x (\cot y) y_1,\ \\
      & Q_3=\cos x +\sin x (\cot y) y_1.
  \end{split}
\end{equation}
We next compute the quantities
\begin{equation}\label{Wij}
  \begin{split}
     & W_{12}=Q_1 \widehat{D}_x Q_2- Q_2 \widehat{D}_x Q_1,\\
      & W_{13}=Q_1 \widehat{D}_x Q_3- Q_3 \widehat{D}_x Q_1,\\
      & W_{23}=Q_2 \widehat{D}_x Q_3- Q_3 \widehat{D}_x Q_2,
  \end{split}
\end{equation}
where $\widehat{D}_x$ is the total derivative operator restricted to the geodesic equation
$y_2=F(y,y_1)=2(\cot y) y_1^2+\sin y \cos y$
$$\widehat{D}_x=\gen x+y_1\gen y+F\gen {y_2}.$$
They are explicitly given by
\begin{equation}\label{Wij-2}
\begin{split}
     & W_{12}=-(\cos x \csc^2 y) y_1^3+(\sin x\cot y) y_1^2-\cos x y_1+\sin x \sin y \cos y,\\
      & W_{13}=(\sin x \csc^2 y) y_1^3+(\cos x\cot y) y_1^2+\sin x y_1+\cos x \sin y \cos y,\\
      & W_{23}=\frac{1}{2}(1-\cos 2y+2y_1^2).
  \end{split}
\end{equation}
The two nonconstant functionally independent first integrals  then arise as the ratios
\begin{equation}\label{first-int}
  \frac{W_{12}}{W_{23}}=c_1,  \qquad  \frac{W_{13}}{W_{23}}=c_1
\end{equation}
because by construction
$$\widehat{D}_x\left(\frac{W_{12}}{W_{23}}\right)=0,  \quad \widehat{D}_x\left(\frac{W_{13}}{W_{23}}\right)=0.$$
Eqs. \eqref{first-int} define the solution parametrically. It is possible to eliminate the parameter $y_1$ between them as
\begin{equation}\label{great-circles}
  c_1 \sin x+c_2 \cos x+\cot y=0
\end{equation}
or in terms of new parameters $\alpha, \beta$ as
\begin{equation}\label{great-circle}
  \cot y=\alpha \cos(x+\beta).
\end{equation}
This is the equation of  great circle (intersection of the plane passing through the origin with the unit sphere).

On the other hand, if $\nu\neq 0$, then the solution can only be obtained in parametric form.

We note that the geodesic equation \eqref{geodesic-ode} is the Euler-Lagrange equation of the (first-order) arc-length functional on $\mathbb{S}^2$
\begin{equation}\label{arc-length}
  J[y]=\int_{x_1}^{x_2}L(y,y_1)dx
\end{equation}
with  Lagrangian
$$L(y,y_1)=\sqrt{y_1^2+\sin^2 y}.$$
It is easy to verify that the vector fields defined by \eqref{realiz-on-S2} are also variational symmetries of the functional \eqref{arc-length}. This is indeed the case because infinitesimal invariance condition
\begin{equation}\label{var-invar}
  \pr{1}\mathbf{v}_i(L)+{D}_x (\xi_i)L=0, \quad i=1,2,3
\end{equation}
is satisfied for each vector field $\mathbf{v}_i$ with $\xi_i$ being the $x$-coefficients of the vector fields.
The Noether theorem then can be used to find three first integrals $\mathscr{I}_1, \mathscr{I}_2, \mathscr{I}_3$ as
$$\mathscr{I}_i=\xi_i L+Q_i\frac{\partial L}{\partial y_1}=c_i.$$
For an excellent discussion of this theorem with interesting applications   the reader can consult the book \cite{Olver1993}.
A simple computation gives the first integrals
\begin{equation}\label{noether-first-int}
  \begin{split}
     & \mathscr{I}_1=L^{-1}\sin^2 y,  \quad  \mathscr{I}_2=L^{-1}(\sin x y_1+\cos x\sin y\cos y),\\
      & \mathscr{I}_3=L^{-1}(\cos x y_1-\sin x\sin y\cos y).
  \end{split}
\end{equation}
The first one is the conservation of energy in which $\mathscr{I}_1=-H=L-y_1 L_{y_1}$ is the Hamiltonian.
Of course, only two of them are functionally independent. Indeed, we have the relation
\begin{equation}\label{geodesics}
  c_2 \cos x-c_3 \sin x-c_1 \cot y=0,
\end{equation}
which is in fact the general solution (without any integration) with the parameter $y_1$ eliminated.
If $c_1\ne 0$, we can put $c_1=1$ by dividing by $c_1$ and  redefining $c_2$ and $c_3$.

Alternatively,  the first integral $\mathscr{I}_1$, which is invariant under $\gen x$,
$$c \sin^4 y=y_1^2+\sin^2 y,  \quad c=c_1^{-2}$$ can be written as
$$[(\cot y)']^2+(\cot y)^2=c-1\equiv \alpha,$$
which integrates to the solution \eqref{great-circle}.

\begin{remark}\label{rank-2-realiz}
The algebra  spanned by  \eqref{realiz-on-S2} is the  two dimensional rank-two realization of $\ort(3,\mathbb{R})$ that  can be obtained by the following procedure.  We may start with the locally smooth canonical form $\mathbf{v}_1=\gen x$. Then with $\mathbf{v}_2$, $\mathbf{v}_3$ in the generic form
$$\mathbf{v}_2=a(x,y)\gen x+b(x,y)\gen y,  \quad \mathbf{v}_3=A(x,y)\gen x+B(x,y)\gen y$$
the commutation relations \eqref{comm-so3} give
$$a_x=A,  \quad b_x=B, \quad a=-A_x,  \quad b=-B_x$$ with solutions
$$a=\alpha(y)\cos(x+\delta(y)),  \quad b=\beta(y)\sin(x+\lambda(y)),$$
$$A=-\alpha(y)\sin(x+\delta(y)),  \quad B=\beta(y)\cos(x+\lambda(y)).$$
We must have $a,b,A,B\ne 0$ for a rank-two realization because all wedge products (bivectors) $\mathbf{v}_1\wedge \mathbf{v}_2$,  $\mathbf{v}_1\wedge \mathbf{v}_3$, $\mathbf{v}_2\wedge \mathbf{v}_3$ must be nonzero. Now the invertible transformation preserving the canonical form $\mathbf{v}_1$ is $\tilde{x}=x+X(y)$, $\tilde{y}=Y(y)$, $Y'\ne 0$ and maps $\mathbf{v}_2$ to
$$\tilde{\mathbf{v}}_2=[\alpha \cos(\tilde{x}-X+\delta)+\beta X' \sin(\tilde{x}-X+\lambda)]\gen {\tilde{x}}+\beta Y' \sin(\tilde{x}-X+\lambda)\gen {\tilde{y}}.$$
We choose $X$ and $Y$ so that $\beta Y'=1$ and $X=\lambda$. We may write $\tilde{\mathbf{v}}_2$ (without tildes) as
$$\mathbf{v}_2=\gamma(y)\cos(x+\sigma(y))\gen x+\sin x\gen y,  \quad \gamma\ne 0,$$
where $\gamma(y)$ and $\sigma(y)$ are new arbitrary smooth functions.  Then we have
$$\mathbf{v}_3=-\gamma(y)\sin(x+\sigma(y))\gen x+\cos x\gen y.$$
The commutation relation $[\mathbf{v}_2,\mathbf{v}_3]=\mathbf{v}_1$ then gives
$$\gamma(\sin \sigma)=0,  \quad -(1+\gamma^2)-\gamma'(\cos \sigma)+\gamma' \sigma'(\sin \sigma)=0.$$ So  we have $\sin \sigma=0$ or $\cos \sigma=\pm 1$. For $\cos \sigma=1$, $\gamma'=-(1+\gamma^2)$ with solution $\gamma=\cot(y+\chi)$ and $\cos(x+\sigma(y))=\cos x$. The translation $\tilde{y}=y+\chi$ removes $\chi$ in $\gamma$ and consequently we find
$$\mathbf{v}_2=\cot y \cos x\gen x+\sin x \gen y,  \quad \mathbf{v}_3=-\cot y \sin x\gen x+\cos x \gen y.$$
For $\cos \sigma=-1$, we have $\gamma'=(1+\gamma^2)$ with solution $\gamma=-\cot(y+\chi)$ and $\cos(x+\sigma(y))=-\cos x$ leading to the earlier realization.
This concludes the rank-two realization of $\ort(3,\mathbb{R})$.

We observe that the vector fields $\mathbf{v}_1, \mathbf{v}_2, \mathbf{v}_3$ are invariant under the transformation $\tilde{x}=x+2n\pi$, $\tilde{y}=y+n\pi$, $n\in \mathbb{Z}$ so we can restrict the coordinates $(x,y)$ onto the unit sphere.

As a result, we can state that any second order ordinary differential equation admitting a rank-two realization of $\ort(3,\mathbb{R})$ algebra is equivalent to \eqref{so3-inv-ode} by point transformations.

We comment that there is no rank-one realization of $\ort(3,\mathbb{R})$ on the $(x,y)$-space. If it were true, then there would exist real smooth functions $f(x,y)$ and $g(x,y)$ such that
$\mathbf{v}_1=\mathbf{w}$, $\mathbf{v}_2=f\mathbf{w}$, $\mathbf{v}_3=g\mathbf{w}$, where $\mathbf{w}$ is a smooth vector field. From the commutation relations we find that $f$ and $g$ must satisfy
$$f=-\mathbf{w}(g), \quad g=\mathbf{w}(f),  \quad f\mathbf{w}(g)-g\mathbf{w}(f)=1.$$ They imply the relation $f^2+g^2+1=0$, which has no solution for real functions $f, g$.

\end{remark}

\begin{remark}
All locally primitive Lie algebras of vector fields in $\mathbb{R}^2$
isomorphic to $\ort(3,\mathbb{R})$ are equivalent to the Lie algebra spanned by the vector
fields
\begin{equation}\label{realiz-on-R2}
  \begin{aligned}
&\mathbf{v}_1=-y\gen x+x \gen y,\\
&\mathbf{v}_2=(1+x^2-y^2)\gen x+2xy\gen y,\\
&\mathbf{v}_3=2xy\gen x+(1+y^2-x^2)\gen y
\end{aligned}
\end{equation}
under a local change of coordinates \cite{Gonzalez-LopezKamranOlver1990}. They satisfy  the commutation relations
$$[\mathbf{v}_1,\mathbf{v}_2]=-\mathbf{v}_3,  \quad [\mathbf{v}_2,\mathbf{v}_3]=-4\mathbf{v}_1,  \quad [\mathbf{v}_3,\mathbf{v}_1]=-\mathbf{v}_2.$$
The scaled vector fields
$$\tilde{\mathbf{v}}_1=-\mathbf{v}_1,  \quad \tilde{\mathbf{v}}_2=\frac{1}{2}\mathbf{v}_2, \quad \tilde{\mathbf{v}}_3=\frac{1}{2}\mathbf{v}_3$$ satisfy the standard $\ort(3,\mathbb{R})$ algebra commutation relations \eqref{comm-so3}.
The
Lie algebra $\ort(3,\mathbb{R})$ acts on the unit sphere  by infinitesimal rotations
\begin{equation}
\mathbf{w}_{1}=x \partial_{y}-y \partial_{x}, \quad
\mathbf{w}_{2}=z \partial_{x}-x \partial_{z}, \quad
\mathbf{w}_{3}=y \partial_{z}-z \partial_{y}.
\end{equation}
The vector fields $\mathbf{v}_1, \mathbf{v}_2, \mathbf{v}_3$ are just the images of $\mathbf{w}_1, \mathbf{w}_2, \mathbf{w}_3$ under the standard
stereographic projection from the north pole $(0, 0,1)$ of the unit sphere $x^2+y^2+z^2=1$ on the $(X,Y)$-plane
\begin{equation}\label{stereo-proj}
\pi:  \quad (X, Y)=\left(\frac{x}{1-z}, \frac{y}{1-z}\right)
\end{equation}
with inverse
\begin{equation}\label{inv-stereo-proj}
\pi^{-1}: \quad (x, y, z)=\left(\frac{2 X}{1+X^{2}+Y^{2}}, \frac{2 Y}{1+X^{2}+Y^{2}}, \frac{X^{2}+Y^{2}-1}{1+X^{2}+Y^{2}}\right).
\end{equation}
This is seen easily forming the following images and using the inverse transformation \eqref{inv-stereo-proj}
$$X_*(\mathbf{w}_1)=-Y,   \quad Y_*(\mathbf{w}_1)=X,$$
$$X_*(\mathbf{w}_2)=-\frac{x^2+z(z-1)}{(1-z)^2}=-\frac{1}{2}(1+X^2-Y^2),   \quad Y_*(\mathbf{w}_2)=-\frac{xy}{(1-z)^2}=-XY,$$
$$X_*(\mathbf{w}_3)=XY,  \quad Y_*(\mathbf{w}_3)=\frac{y^2+z(z-1)}{(1-z)^2}=\frac{1}{2}(1-X^2+Y^2).$$

One can of course construct a second order differential equation invariant under the algebra \eqref{realiz-on-R2} whose solution can be found using the method of Example \ref{method-first-int}. Second order elementary invariants of the rotational vector field $\mathbf{v}_1$ ($\ort(2,\mathbb{R})$-invariants) are known as
$$J_1=x^2+y^2,  \quad J_2=\frac{x y_1-y}{x+y y_1},  \quad J_3=\kappa=(1+y_1^2)^{-3/2}y_2.$$
All invariants of the whole algebra should be functions of these three invariants. It is more convenient to switch to a new basis of  invariants
$$I_1=J_1,  \quad I_2=I_1^{-1}(1+J_2^{-2})=(xy_1-y)^{-2}(1+y_1^2),  \quad I_3=J_3(1+I_1).$$
Then we can write the second PDE $\pr{2}\mathbf{v}_2(I)=0$ in terms of $I_2$ and $I_3$
$$2y_1(y-xy_1)^{-1}(1+I_1)[I_2 \gen{I_2}+I_2^{-1/2}\gen{I_3}]I=0.$$ So we find a family of  invariant equations $I_4=I_3+2I_2^{-1/2}=c$, depending on a single parameter, or in terms of $(x,y)$ coordinates
\begin{equation}\label{inv-so3-2}
  y_2=(1+y_1^2)(1+x^2+y^2)^{-1}[2(xy_1-y)+c(1+y_1^2)^{1/2}].
\end{equation}
Note that $\pr{2}\mathbf{v}_3(I_4)=0$ is again satisfied automatically.
See \cite{Hydon2000} for a derivation of its general solution in non-parametric form when $c=0$ as the family of circles
$$(x-c_1)^2+(y-c_2)^2=1+c_1^2+c_2^2.$$
\end{remark}

\end{example}

The following Theorem will be useful when performing reductions.

\begin{theorem}[\cite{Olver1995}]\label{inherited}
  Suppose $\mathsf{E}(x,y^{(n)})=0$ is an $n$-th order ODE with a symmetry group $G$. Let $H$ be a one-parameter subgroup of $G$. Then the  ODE reduced by $H$, $\mathsf{E}/H$, admits the quotient group  $\Nor_G(H)/H$, where $\Nor_G(H)=\curl{g\in G:g.H.g^{-1}\subset H}$ is the normalizer subgroup of $H$ in $G$, as a symmetry group (often called inherited symmetry group of $\mathsf{E}/H$).
\end{theorem}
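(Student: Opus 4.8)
The plan is to show that the reduced equation $\mathsf{E}/H$ is invariant under precisely those symmetries of $\mathsf{E}$ that descend to well-defined transformations on the quotient jet space, and to identify these as $\Nor_G(H)/H$. The conceptual picture: reducing by $H$ means passing to the space of $H$-orbits, so the symmetries of $\mathsf{E}$ that survive are exactly those that permute the $H$-orbits, i.e. those that normalize $H$; moreover any element of $H$ itself acts trivially on the orbit space, so the effective symmetry group of $\mathsf{E}/H$ is the quotient $\Nor_G(H)/H$.

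**First I would** set up rectifying coordinates. Pick a generator $\mathbf{w}\in\lie$ of the one-parameter subgroup $H$, and introduce canonical coordinates $r=r(x,y)$, $s=s(x,y)$ as in the previous theorem, so that $\mathbf{w}=\gen s$ and $\pr{n}\mathbf{w}=\gen s$. In these coordinates the reduced equation $\mathsf{E}/H$ is the equation $(\mathsf{E}/G)$ written for $z=ds/dr$ as an ODE of order $n-1$ in the variables $(r,z,z',\ldots,z^{(n-2)})$, independent of $s$. Next I would take any $\mathbf{v}\in\lie$ that normalizes $\lie_H=\Span\{\mathbf{w}\}$, i.e. $[\mathbf{v},\mathbf{w}]=c\,\mathbf{w}$ for some constant $c$ (this is the infinitesimal form of $g.H.g^{-1}\subset H$). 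Rewrite $\mathbf{v}$ in the $(r,s)$-coordinates using \eqref{vf-new-coord}; the bracket relation $[\mathbf{v},\gen s]=c\gen s$ forces the coefficient of $\gen s$ in $\mathbf{v}$ to be of the form $-cs+\sigma(r)$ and the coefficient of $\gen r$ to depend only on $r$. Hence $\mathbf{v}$ projects to a well-defined vector field $\bar{\mathbf{v}}$ in the variable $r$ (and acts on $z=s'$), and its prolongation annihilates $\mathsf{E}/H$ precisely because $\pr{n}\mathbf{v}$ annihilates $\mathsf{E}$ and the $s$-dependence drops out. Conversely, a generator $\mathbf{v}$ whose reduction is a symmetry of $\mathsf{E}/H$ must, when lifted back, commute with $\gen s$ modulo $\gen s$ itself, so it lies in $\Nor_G(H)$.

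**Then I would** verify the group-level statement: the map $\mathbf{v}\mapsto\bar{\mathbf{v}}$ is a Lie algebra homomorphism $\nor_{\lie}(\lie_H)\to(\text{symmetries of }\mathsf{E}/H)$ whose kernel is exactly $\lie_H$ (since $\mathbf{w}$ reduces to the zero vector field in $r$), giving an injection $\nor_{\lie}(\lie_H)/\lie_H\hookrightarrow$ symmetry algebra of $\mathsf{E}/H$, which integrates to the quotient group $\Nor_G(H)/H$ acting on solutions of the reduced equation. One should also remark that the reduced symmetries act on the solution $z=\vartheta(r)$ and, after the quadrature $s=\int\vartheta\,dr$, lift to the claimed action on solutions of $\mathsf{E}$.

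**The main obstacle** is the careful bookkeeping around "well-defined on the quotient": the quotient jet space is not literally a jet space of the reduced equation until one checks that the invariants $r$ and $z=ds/dr$ (together with the invariant differentiations $\mathcal{D}=(D_x r)^{-1}D_x$) furnish genuine coordinates, and that the prolongation of the projected vector field $\bar{\mathbf{v}}$ agrees with the projection of $\pr{n}\mathbf{v}$ — i.e. that projection commutes with prolongation. This is where the constancy of $c$ in $[\mathbf{v},\mathbf{w}]=c\,\mathbf{w}$ (as opposed to a function) is essential; if $c$ were a nonconstant function of $r$, the coefficient of $\gen r$ in $\mathbf{v}$ could still depend on $r$ alone but the flow would not preserve the translational structure cleanly. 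Handling the degenerate cases (e.g. when $\mathbf{v}\in\lie_H$, or when $\Nor_G(H)=H$ so the reduced equation has no inherited continuous symmetry) is routine once the generic computation is in place, so I would dispatch those with a sentence each rather than a separate argument.
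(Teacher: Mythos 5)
Your proposal is correct and follows the standard argument: the paper itself gives no proof (the theorem is cited from Olver's \emph{Equivalence, Invariants and Symmetry}), but the remark immediately following it restates the result infinitesimally via exactly your key step, namely that in rectifying coordinates $[\mathbf{v},\gen s]=c\,\gen s$ forces $\mathbf{v}=\alpha(r)\gen r+(-cs+\sigma(r))\gen s$, so $\mathbf{v}$ descends to the invariants $(r,z)$ with kernel $\lie_H$. Your additional care about prolongation commuting with projection and about $c$ being constant is appropriate and consistent with the paper's treatment.
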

The normalizer algebra in $\lie$ of the subalgebra $\mathfrak{h}\subset \lie$ is the maximal subalgebra satisfying
\begin{equation}\label{normalizer}
  \nor_\lie\mathfrak{h}=\curl{\mathbf{v}\in \lie:[\mathbf{v},\mathfrak{h}]\subset\mathfrak{h}}.
\end{equation}
\begin{remark}
The normalizer of a subalgebra $\mathfrak{h}\subset \lie$ in the Lie algebra $\lie$ of  $\dim\lie=r$ with a basis $\curl{\mathbf{v}_1,\ldots,\mathbf{v}_r}$ is easily obtained by solving a linear algebra problem. Let the subalgebra $\mathfrak{h}$  of  $\dim\mathfrak{h}=k$ be spanned by $\curl{\mathbf{v}_{1},\ldots \mathbf{v}_{k}}$. Now for $\mathbf{v}=\sum_{j=1}^r a_j\mathbf{v}_j$, $a_j\in \mathbb{R}$ and $\mathbf{v}_{\alpha}\in \mathfrak{h}$, $1\leq \alpha \leq k$, we  impose the requirement \eqref{normalizer}
$$[\mathbf{v},\mathbf{v}_{\alpha}]=[\sum_{j=1}^r a_j\mathbf{v}_{j},\mathbf{v}_{\alpha} ]=\sum_{j=1}^r[\mathbf{v}_{j},\mathbf{v}_{\alpha}]a_j$$
$$=\sum_{i=1}^r\sum_{j=1}^rC_{j\alpha}^i a_j \mathbf{v}_i=\sum_{i=1}^k\lambda_{\alpha,i}\mathbf{v}_i$$ for some constants $\lambda_{\alpha,i}$.  Here $C_{j\alpha}^i$ are the structure constants of $\lie$. This implies that the following set of linear algebraic equations need to be solved for the coefficients $a_j$, $1\leq j\leq r$
$$\sum_{j=1}^rC_{j\alpha}^i a_j=\lambda_{\alpha,i}, \quad i=1,\ldots,k, \quad \alpha=1,\ldots, k,$$
$$\sum_{j=1}^rC_{j\alpha}^i a_j=0, \quad i=k+1,\ldots, r.$$
\end{remark}
\begin{remark}
If $\mathfrak{h}$ is already an ideal of $\lie$, then $\nor_\lie\mathfrak{h}=\lie$. If $\nor_\lie\mathfrak{h}=\mathfrak{h}$, then $\mathfrak{h}$ is called self-normalizing.
\end{remark}
Theorem \ref{inherited} infinitesimally states that if the Lie symmetry algebra $\lie$ of G has a basis $\mathbf{v}_1,\ldots, \mathbf{v}_r$. Then the ODE reduced by $\mathbf{v}_1$ can be reduced one more if $\mathbf{\hat{v}}_2=\Span\curl{\mathbf{v}_2,\ldots, \mathbf{v}_r}$ is chosen to satisfy  $[\mathbf{\hat{v}}_2,\mathbf{v}_1]=k \mathbf{v}_1$ for some real constant $k$, meaning that $\mathbf{v}_1$ is an ideal (normal subalgebra) of $\mathbf{\hat{v}}_2$. One can reiterate this process to achieve a full reduction.

Theorem \ref{inherited} applied to a two-parameter symmetry group with the Lie algebra $\lie$ being one of the isomorphy classes of  Remark \ref{rmk-Lie} and satisfying the commutation relation $[\mathbf{v}_1,\mathbf{v}_2]=k \mathbf{v}_1$ ensures that given a second order ODE invariant under $\lie$, reducing its order by one by the ideal (normal subalgebra) $\mathbf{v}_1$ in $\lie$ will lead to a first order ODE inheriting the one-parameter subgroup generated by  $\mathbf{v}_2$ as a symmetry group. Note that the normalizer of $\curl{\mathbf{v}_1}$ is $\curl{\mathbf{v}_1,\mathbf{v}_2}$ and $\mathbf{v}_2$ belongs to $\nor_\lie\mathfrak{h}/\mathfrak{h}$. This reduction procedure  makes possible the integration of the ODE by two successive quadratures.

If the reduction is performed in the reverse order, the reduced ODE will in general not inherit the symmetry of the original equation, so we may not be able to complete the full integration.

\begin{example}
  The second order invariant equation with the same symmetry $\mathbf{v}_1=\xi\gen x+\eta y \gen y$ of Example \eqref{fiber-preser} can be expressed in terms of the second order invariants; $r,w$ as defined in \eqref{inv} and
\begin{equation}\label{2nd-inv}
  \zeta(x,y,y',y'')=w\frac{dw}{dr}=\nu(x)[\xi^2 y''+\xi(\xi'-2\eta)y'+(\eta^2-\xi\eta')y]
\end{equation}
as
\begin{equation}\label{2ndODE}
  y''+p(x)y'+q(x)y=\frac{H(r,w)}{\nu\xi^2},
\end{equation}
where
$$p(x)=\xi^{-1}(\xi'-2\eta),  \quad q(x)=\xi^{-2}(\eta^2-\xi\eta'), \quad \xi\ne 0.$$ Elimination of $\eta$ gives the relation
\begin{equation}\label{xi-inv-rel}
 \frac{1}{4}(2\xi\xi''-\xi'^2)+\xi^2 I(x)=0, \quad I(x)=q(x)-\frac{1}{2}p'(x)-\frac{1}{4}p(x)^2.
\end{equation}
The particular choice $\eta=\xi'/2$ leads to the simpler equation
\begin{equation}\label{2ndODE-simpler}
  y''+q(x)y=\xi^{-3/2}H(r,w), \quad q(x)=-\frac{(\sqrt{\xi})''}{\sqrt{\xi}},
\end{equation}
where the invariant variables $r, w$ have the form
$$r=\frac{y}{\sqrt{\xi}},  \quad w=\frac{1}{\sqrt{\xi}}\left(\xi y'-\frac{\xi'}{2}y\right).$$

If $\xi=x^2$ is chosen, $\mathbf{v}_1$  generates an inversional group and this equation simplifies to
\begin{equation}\label{inv-group-inv-ode}
 y''=x^{-3}H(r,w), \quad r=\frac{y}{x}, \quad w=xy'-y.
\end{equation}
If we additionally ask the equation to be invariant under the scaling $(x,y)\to (\lambda x,\lambda^{\alpha}y)$, $\lambda>0$ generated by $\mathbf{v}_2=x\gen x+\alpha y\gen y$, the following condition on $H$ should be imposed
$$(\alpha-1)rH_r+\alpha w H_w=(\alpha+1)H.$$
Thus, if $\alpha\ne 1$, $H$   is restricted to
$$H=r^{(\alpha+1)/(\alpha-1)}\hat{H}(\sigma),  \quad \sigma=r^{\alpha/(1-\alpha)}w, $$ and to $H=w^2 \hat{H}(r)$ if $\alpha=1$ (see also Example \ref{2nd-ODE-build}).

Choosing $\hat{H}=K=\mbox{const.}$, $\alpha=(n+1)/(n-1)$, $n\ne 1$ we obtain the special form of the famous Emden--Fowler equation
\begin{equation}\label{EF-eq}
  y''=K x^{-(n+3)}y^n, \quad n\ne 0,1,  \quad K\ne 0
\end{equation}
with a two-parameter symmetry group generated by the two-dimensional Lie algebra $\lie$ of type $A_{2,3}$
$$\mathbf{v}_1=x^2\gen x+xy\gen y, \quad \mathbf{v}_2=x\gen x+\frac{n+1}{n-1}y\gen y, \quad [\mathbf{v}_1,\mathbf{v}_2]=-\mathbf{v}_1.$$
Theorem \ref{inherited} guarantees that
integration is completed using  two quadratures by reductions in the order of $\mathbf{v}_1$ (an ideal of $\lie$) first and then $\mathbf{v}_2$. We remark that apart from the special case
\begin{equation}\label{minus5}
 y''=K x^{-5}y^2,
\end{equation}
which is obtained for $n=2$ from \eqref{EF-eq} \label{n=2}, there are only two other values of the exponent $m$ in  the equation $y''=K x^{m}y^2$ for which the symmetry algebra is a two-dimensional (rank-two) nonabelian one. They are $m=-15/7$ and $ m=-20/7$ (see \cite{Stephani1989} for classification details). For all values $m$ with $m\notin \curl{-5, -15/7, -20/7 }$, the symmetry algebra is one-dimensional and generated by the scaling symmetry $\mathbf{v}=x\gen x-(m+2)y\gen y$.

Eq. \eqref{minus5} is invariant under the two-dimensional symmetry algebra
\begin{equation}\label{vf-minus5}
  \mathbf{v}_1=x^2\gen x+xy\gen y, \quad \mathbf{v}_2=x\gen x+3y\gen y.
\end{equation}
We can integrate \eqref{minus5} using differential invariants $I=y/x$ and $J=x y'-y$ of $ \mathbf{v}_1$. The reduced first order equation is obtained by invariant differentiation
\begin{equation}\label{red-minus5}
  \frac{dJ}{dI}=\frac{x^3 y''}{x y'-y}=K\frac{x^{-2}y^2}{x y'-y}=K\frac{I^2}{J},
\end{equation}
whose integration gives
$$ J^2-\frac{2K}{3}I^3=c_1$$ or
\begin{equation}\label{red-first-minus5}
  (xy'-y)^2-\frac{2K}{3}x^{-3}y^3=c_1.
\end{equation}
Note that this equation is invariant under the scaling group $(x,y)\to (\lambda x,\lambda^{3}y)$, $\lambda>0$ corresponding to $\mathbf{v}_2$. In terms of the normal coordinates $r=y/x$, $s=-1/x$ of $\mathbf{v}_1$ it has the form
$$r'(s)^2=\frac{2K}{3}r^3+c_1.$$
For $c_1\ne 0$, the general solution can be expressed in terms of elliptic functions. It is elementary, when $c_1=0$, which is given by $r=(1/(\sqrt{6})s+c_2)^{-2}$ and in terms of $(x,y)$ variables
\begin{equation}\label{elem-sol}
  y(x)=\frac{6x^3}{(\sqrt{K}+\sqrt{6}c_2 x)^2}.
\end{equation}

Moreover, the particular case $n=-3$ of \eqref{EF-eq} gives the celebrated Ermakov--Pinney equation
\begin{equation}\label{specialEP}
  y''=Ky^{-3}
\end{equation}
with solution
$$y=\pm \sqrt{A+2Bx+Cx^2}, \quad AC-B^2=K.$$
This equation admits the $\Sl(2,\mathbb{R})$ algebra as the symmetry algebra with the basis
\begin{equation}\label{sl2}
  \mathbf{w}_1=\gen x, \quad \mathbf{w}_2=x\gen x+\frac{1}{2}y\gen y, \quad \mathbf{w}_3= x^2\gen x+xy\gen y
\end{equation}
satisfying the commutation relations
\begin{equation}\label{sl2-comm}
 [\mathbf{w}_1,\mathbf{w}_2]=\mathbf{w}_1,  \quad  [\mathbf{w}_1,\mathbf{w}_3]=2\mathbf{w}_2, \quad [\mathbf{w}_2,\mathbf{w}_3]=\mathbf{w}_3.
\end{equation}

Just as in the previous case, the choice $\xi=x^{2-k}$, $\eta=(1-k)x^{1-k}$ in $\mathbf{v}_1$ of \eqref{2ndODE},  combined with the scaling group generated by $$\mathbf{v}_2=x\gen x+\frac{\delta+2}{1-m}y\gen y,  \quad m\ne 1$$
can be shown to produce the following integrable  variant of the generalized Lane--Emden--Fowler equation
\begin{equation}\label{EF-eq-var}
  y''+\frac{k}{x}y'=K x^{\delta}y^{m},  \quad m\ne 0,1, \quad K\ne 0,
\end{equation}
if the condition $3+\delta+m-k(m+1)=0$ is satisfied.

The case when $\delta=0$ ($m=(k-3)/(1-k)$) appeared in \cite{Gungor2002} as one of the reductions of a radially symmetric nonlinear porous-medium equation. A suitable basis of the algebra in the case $k\ne 1,2$ is
$$\tilde{\mathbf{v}}_1=(1-k)\mathbf{v}_1,  \quad \tilde{\mathbf{v}}_2=(1-k)^{-2}\mathbf{v}_2=\frac{1}{k-1}x\gen x+\frac{1}{k-2}y\gen y,  $$ with the commutation relation $[\tilde{\mathbf{v}}_1,\tilde{\mathbf{v}}_2]=\tilde{\mathbf{v}}_1$ (type $A_{2,3}$). In terms of the canonical (or normal) coordinates
$$r=\frac{x^{k-2}y^{(k-2)/(k-1)}}{k-1},  \quad s=\frac{x^{k-1}}{k-1},$$ we find the standard form of the corresponding equation
$$rs''(r)=-K(k-2)s'(r)^3+(k-1)^3 s'(r),$$ being invariant under the algebra $\curl{\gen s, r\gen r+s\gen s}$.
On solving by two quadratures, implicit solution of the original equation  is obtained.

Another particular case where $m=-3$ and $\delta=-2k$ leads to an $\Sl(2,\mathbb{R})$ invariant  equation. Symmetry vector fields for $k\ne 1$ are
$$\mathbf{v}_1=x^{2-k}\gen x+(1-k)x^{1-k}y\gen y, \quad \mathbf{v}_2=x\gen x+\frac{1-k}{2}y\gen y, \quad \mathbf{v}_3=x^k\gen x,$$ with the commutation relations
$$[\mathbf{v}_1,\mathbf{v}_2]=(k-1)\mathbf{v}_2, \quad [\mathbf{v}_2,\mathbf{v}_3]=(k-1)\mathbf{v}_3,  \quad [\mathbf{v}_1,\mathbf{v}_3]=2(k-1)\mathbf{v}_2.$$
If $k=1$, they are given by
$$\mathbf{v}_1=2x \ln x\gen x+y\gen y, \quad \mathbf{v}_2=x (\ln x)^2\gen x+y\ln x\gen y,  \quad \mathbf{v}_3=x\gen x$$ and satisfy the commutation relations
$$[\mathbf{v}_1,\mathbf{v}_2]=2\mathbf{v}_2,  \quad [\mathbf{v}_2,\mathbf{v}_3]=-\mathbf{v}_1,  \quad [\mathbf{v}_1,\mathbf{v}_3]=-2\mathbf{v}_3.$$
This particular equation reduces to the standard Ermakov--Pinney equation $y''(t)=K y^{-3}(t)$ by  change of the independent variable, $t=x^{1-k}/(1-k)$, $k\ne 1$, and $t=\ln x$ for $k=1$.

We can extract a similar integrable class from \eqref{2ndODE} by imposing  invariance under a two-parameter symmetry group extended by $\mathbf{v}_2=\gen x$. To do this we require that the commutation relation $[\mathbf{v}_2,\mathbf{v}_1]=\mu \mathbf{v}_1$, $\mu\ne 0$  be satisfied, which implies that $\xi=e^{\mu x}$, $\eta=\alpha e^{\mu x}$ for some constant $\alpha$ and  that Eq. \eqref{2ndODE} is autonomous. For this choice of $\xi$, $\eta$ the left hand side of \eqref{2ndODE} is already independent of $x$.  We also require the right hand side to have the same property. This produces the PDE
\begin{equation}\label{x-indep-rhs}
 y''+p y'+q y=y^m G(\omega), \quad \omega=y^{-(m+1)/2}(y'-\alpha y),  \quad \mu=\frac{\alpha(1-m)}{2}.
\end{equation}
The condition \eqref{xi-inv-rel} is identically satisfied.
The special choice $G=K=\mbox{constant}$ and  $m\ne 1$ ($\mu\ne 0$)   gives us the integrable equation (a type of Emden--Fowler equation known as force-free generalized Duffing oscillator)
\begin{equation}\label{integ-2ndODE}
  y''+p y'+q y=K y^m, \quad p=-\frac{\alpha(m+3)}{2},  \quad q=\frac{\alpha^2(m+1)}{2}
\end{equation}
with symmetry algebra generated by
\begin{equation}\label{Duffing-algebra}
  \mathbf{v}_1=e^{\mu x}(\gen x+\alpha y\gen y),  \quad \mathbf{v}_2=\gen x,  \quad \alpha=\frac{2\mu}{1-m}.
\end{equation}
If $\alpha$ is eliminated between the coefficients $p$ and $q$ we find the integrability condition
\begin{equation}\label{int-cond}
  q=\frac{2(m+1)}{(m+3)^2}p^2,  \quad m\ne -3.
\end{equation}
Under this condition, \eqref{integ-2ndODE} passes the Painlev\'e test.

In terms of the canonical coordinates of $\mathbf{v}_1$
$$\tilde{x}=\mu^{-1}e^{-\mu x},  \qquad \tilde{y}=ye^{-\alpha x}, \quad \mu=\frac{m-1}{m+3}p, \quad \alpha=-\frac{2p}{m+3},$$
Eq. \eqref{integ-2ndODE} is reduced to $d^2\tilde{y}/d\tilde{x}^2=K\tilde{y}^m$, which is invariant under the sym\-met\-ry group generated by
$$\tilde{\mathbf{v}}_1=\gen {\tilde{x}},  \quad \tilde{\mathbf{v}}_2=\tilde{x}\gen {\tilde{x}}+\frac{2}{1-m}\tilde{y}\gen {\tilde{y}},$$
and can be integrated by two quadratures.

The travelling wave solutions of Fisher's (also called Kolmogorov--
Petrovsky--Piscunov) equation satisfy the ODE \cite{AblowitzZeppetella1979}
\begin{equation}\label{wave-sol}
  y''+cy'+y(1-y)=0.
\end{equation}

If we identify \eqref{wave-sol} with \eqref{integ-2ndODE} we find $p=c$, $q=1$ and $m=2$, $K=1$ and the integrability condition imposes a constraint    $c=\pm 5/\sqrt{6}$ on the wave speed. \label{special-wave-speed} The corresponding solution  is given in terms of Weierstrass function by $\tilde{y}=\wp(1/\sqrt{6}(\tilde{x}+a);0,g_3)$ (see page \pageref{p-sol}).

The same type of solutions for the Newell--Whitehead--Segel equation satisfy
\begin{equation}\label{Newell–Whitehead–Segel}
   y''+cy'+y(1-y^2)=0.
\end{equation}
The integrability condition  \eqref{int-cond} for $p=c$, $q=1$ and $m=3$  then requires
$c=\pm 3/\sqrt{2}$. Solutions are found in terms of Jacobi elliptic functions.

Surprisingly, the special case $m=3$ of \eqref{integ-2ndODE} turns up in seeking localized
stationary solutions of the form $u(x,t)=e^{-i\lambda t}u(x)$ of the one-dimensional nonlinear Schr\"odinger equation with inhomogeneous nonlinearity
$$iu_t+u_{xx}=V(x)u+g(x)|u|^2u, \quad x\in \mathbb{R},$$ where $V(x)$ is an external potential and $g(x)$ describes the spatial modulation of the nonlinearity (see for example \cite{Belmonte-BeitiaPerez-GarciaVekslerchikTorres2008}). For some special choice of pairs $(V,g)$, dictated by the presence of a Lie point symmetry,   $u(x)$ satisfies
$$u''+2C u'+Eu=g_0 u^3.$$ This equation is integrable if $E=(8/9) C^2$.

In Eqs. \eqref{wave-sol} and \eqref{Newell–Whitehead–Segel}, there is no loss of generality in assuming $c>0$, because using discrete transformation $x\to -x$, we can put $c\to -c$.

The case $m=3$ ($\mu=-\alpha\ne 0$ arbitrary) of \eqref{integ-2ndODE} is also known as the usual Duffing oscillator \label{Duffing} and under the condition $q=(2/9)p^2$ its exact solutions can be found in terms of Jacobi elliptic functions from integrating the first integral (energy) $$I=\frac{1}{2}\tilde{y}'^2-\frac{K}{4}\tilde{y}^4=\frac{1}{4}e^{-4\alpha x}\left[2y'^2-4\alpha y y'+2\alpha^2 y^2-Ky^4\right].$$

On the other hand, the case $m=-3$ ($p=0$, $q=-\alpha^2$, $\mu=2\alpha$) is recognized to be the celebrated Ermakov--Pinney equation
\begin{equation}\label{EP}
  y''-\alpha^2 y=K y^{-3},  \quad \alpha\ne 0.
\end{equation}
Its symmetry algebra \eqref{Duffing-algebra} is  extended by one additional  element
$$\mathbf{v}_3=e^{-2\alpha x}(\gen x-\alpha y\gen y),$$ or in coordinates $(\tilde{x},\tilde{y})$ (up to multiple of $-4\alpha^2$) by the projective element
$$\tilde{\mathbf{v}}_3=\tilde{x}^2\gen {\tilde{x}}+\tilde{x}\tilde{y}\gen {\tilde{y}}.$$ It is isomorphic to the $\Sl(2,\mathbb{R})$ algebra. The   general solution  depending on two independent arbitrary constants is given by
\begin{equation}\label{EP-sol}
  y^2=A e^{2\alpha x} +2B+Ce^{-2\alpha x}, \quad (AC-B^2)\alpha^2=K.
\end{equation}

\begin{remark}
We comment that as we mentioned above, differential equations may remain invariant under discrete symmetry groups. For example, the special Ermakov equation $y_2=Ky^{-3}$ is invariant under the discrete transformation
$$\tilde{x}=\frac{1}{x},  \quad \tilde{y}=\frac{y}{x}$$ since
$$\tilde{y}_2=\frac{d^2\tilde{y}}{d\tilde{x}^2}=x^3 y_2=Kx^3 y^{-3}=K\tilde{y}^{-3}.$$ More trivial ones are
$$(x,y)\mapsto (-x,y),  \quad (x,y)\mapsto (x,-y), \quad (x,y)\mapsto (-x,-y).$$
This type of symmetries can't be obtained from the local Lie symmetry algebra so they can't be characterized as a one-parameter group transformation.

\end{remark}

In order to obtain another interesting subclass integrable by quadratures, we now let Eq. \eqref{2ndODE}  be invariant under the scaling transformation generated by $\mathbf{v}_2=y\gen y$, or equivalently choosing $H(r,w)=r G(w/r)$. With further specialization  $G(z)=Az+B$, where $A,B$ are arbitrary constants,  we obtain the variable coefficient linear invariant equation
\begin{equation}\label{lin-inv-2ndODE}
  y''+\hat{p}(x)y'+\hat{q}(x)y=0,
\end{equation}
\begin{equation}\label{PQ}
  \hat{p}(x)=\xi^{-1}(\xi'-2\eta-A),  \quad \hat{q}(x)=\xi^{-2}(\eta^2-\xi \eta'+A \eta-B).
\end{equation}
Eliminating $\eta$ results in the relation
\begin{equation}\label{relation}
  (2\xi\xi''-\xi'^2)+4\xi^2 I(x)=-(A^2+4B)\equiv -D^2,
\end{equation}
where $I(x)$ is the (semi)-invariant of \eqref{lin-inv-2ndODE}, namely
\begin{equation}\label{semi-inv}
  I(x)=\hat{q}(x)-\frac{1}{2}\hat{p}'(x)-\frac{1}{4}\hat{p}(x)^2.
\end{equation}
Eq. \eqref{relation} is related to the Ermakov--Pinney equation
\begin{equation}\label{EP-0}
  \chi''+I(x)\chi=-\frac{D^2}{4}\chi^{-3}
\end{equation}
by the transformation $\xi=\chi^2$ and to the linear equation
\begin{equation}\label{3rd-lin}
  \xi'''+4I\xi'+2I'\xi=0
\end{equation}
by differentiation. The remarkable properties of \eqref{3rd-lin} can be found in \cite{CarinenaGuengoerTorres2020}.

Eq. \eqref{lin-inv-2ndODE} admits a symmetry group isomorphic to the $\SL(3,\mathbb{R})$ group (the projective group of the plane $(x,y)$, preserving straight lines in the $(x,y)$-plane) as the symmetry group and can be integrated by quadratures using the two-parameter abelian subgroup generated by $\curl{\mathbf{v}_1,\mathbf{v}_2}$. Passing to the canonical coordinates $r$ and $s$ (as defined in Example \ref{fiber-preser}) reduces \eqref{lin-inv-2ndODE} to the constant coefficient linear equation
\begin{equation}\label{cc-ode}
  r''(s)-A r'(s)-B r(s)=0,
\end{equation}
preserving the homogeneity property in $r$ (invariance under the scaling $r\gen r$).

In the special case of the inversional group ($\xi=x^2$, $\eta=x$), the corresponding invariant ODE becomes
\begin{equation}\label{lin-inv-2ndODE-2}
  y''-\frac{A}{x^2}y'+\frac{Ax-B}{x^4}y=0.
\end{equation}
Its normal form is easy to obtain from the relation \eqref{relation} as
\begin{equation}\label{lin-inv-2ndODE-2-norm}
  v''+I(x)v=0, \quad I(x)=-\frac{D^2}{4x^4},   \quad y=\exp\curl{-\frac{A}{2x}}v.
\end{equation}
Then, we have $r=y/x$, $s=-1/x$, and we can easily solve \eqref{cc-ode} to find the general solution of \eqref{lin-inv-2ndODE-2}
\begin{equation}\label{sol}
  y(x)=x\left[c_1\exp\left(-\frac{\lambda_{+}}{x}\right)+
c_2\exp\left(-\frac{\lambda_{-}}{x}\right)\right],
\end{equation}
where $\lambda_{\pm}=(A\pm D)/2$, $D^2=A^2+4B>0$ are the real roots  of the characteristic equation $P(\lambda)=\lambda^2-A\lambda-B=0$. If the discriminant $D^2$ is zero or negative, the solution should be modified appropriately.

Alternatively,  we can use the differential invariant approach. By means of the differential invariant $z=y'/y=v'/v$ of $\mathbf{v}_2^{(1)}=y\gen y+y'\gen{y'}$ we can express \eqref{lin-inv-2ndODE-2-norm} as a Riccati equation
\begin{equation}\label{special-Riccati}
  \frac{dz}{dx}+z^2=\frac{D^2}{4x^4},
\end{equation}
which inherits the symmetry $\tilde{\mathbf{v}}_1=x^2\gen x+(1-2xz)\gen z$. Using the canonical coordinates $\rho=x(xz-1)$, $s=-1/x$ satisfying $\tilde{\mathbf{v}}_1(\rho)=0$, $\tilde{\mathbf{v}}_1(s)=1$, it can be written as a separable equation, invariant under the group of translations $(\rho, s)\to (\rho, s+\varepsilon)$,
$$\frac{d\rho}{ds}=\frac{D^2}{4}-\rho^2$$ with solution
\begin{equation}
  \rho(s)=
  \begin{cases}
    \frac{D}{2}\tanh[\frac{D}{2}(s+c_0)], &  D^2>0, \;\;\; \\
    \frac{D}{2}\cot[\frac{D}{2}(s+c_0)], & D^2<0,\\
    (s+c_0)^{-1},  & D^2=0.
  \end{cases}
\end{equation}
Finally, from the relation
$$z=\frac{y'}{y}=\frac{1}{x}+\frac{\rho}{x^2}$$
by another integration we recover the general solution of \eqref{lin-inv-2ndODE-2}, and in particular solution \eqref{sol} after some manipulation with the arbitrary constants. For the special case $A=0$, $B=-1$ and $D^2=-4$, the solution of \eqref{special-Riccati} is given by
$$z=x^{-2}\left[x-\cot\left(\frac{1}{x}+c_0\right)\right].$$
Consequently, we obtain the solution of
$$y''+x^{-4}y=0$$ as
$$y=c_1 x \sin\left(\frac{1}{x}+c_0\right).$$

The following equation arises in the integrability analysis of the variable coefficient  Basener--Ross model \cite{GuengoerTorres2018}
\begin{equation}\label{BR}
  y''(x)+I(x)y=0, \quad I(x)=-\frac{e^{2x}+4\tau e^x+\tau^2}{4(\tau+2e^{x})^2},
\end{equation}
where $\tau$ is a  constant. The relation \eqref{relation} for the choice $A=1$, $B=-3/16$ ($D^2=A^2+4B=1/4$) and
$$\xi=\frac{1}{2}(2+\tau e^{-x})$$
gives precisely $I(x)$ as above. One can check that
$$\mathbf{v}_1=\xi(x)\left(\gen x-\frac{1}{2}y\gen y\right)$$ generates one-parameter symmetry group of \eqref{BR}. So we have $\eta(x)=-\xi(x)/2$,  $\nu(x)=e^{x/2}$, $s(x)=\ln(\tau+2e^x)$, and $r(x)=\nu(x)y$. In canonical coordinates $(s,r)$, $r$ satisfies the constant coefficient equation (ODE \eqref{cc-ode})
$$r''(s)-r'(s)+\frac{3}{16}r(s)=0.$$ Solving this equation and changing to $(x,y)$ coordinates we obtain the general solution
$$y(x)=e^{-x/2}\left[c_1 (\tau+2e^x)^{1/4}+c_2(\tau+2e^x)^{3/4}\right].$$

In general, given $\xi(x)$, $A, B$, one can solve \eqref{relation} for $I(x)$ and thus construct an invariant equation of the form \eqref{lin-inv-2ndODE} with symmetry $\mathbf{v}=\xi\gen x+\eta y\gen y$. The coefficient $\eta(x)$ is found  from solving $\hat{p}(x)=0$ as $\eta(x)=(\xi'-A)/2$. If  $\eta(x)$ is substituted into  $\hat{q}(x)$ of \eqref{PQ} it follows that $\hat{q}=I(x)$ as expected. We conclude that
\begin{equation}\label{cano-lin}
  y''+I(x)y=0, \quad I(x)=-\frac{1}{4\xi^2}(2\xi\xi''-\xi'^2+A^2+4B)
\end{equation}
is invariant under
$$\mathbf{v}=\xi(x)\gen x+\frac{1}{2}(\xi'(x)-A)y\gen y.$$
Solution is readily obtained by transforming this equation into the constant coefficient equation \eqref{cc-ode} by the linear transformation
$$y(x)=\sqrt{\xi}\exp[-(A/2)s]r(s), \quad s=\int\frac{dx}{\xi(x)}.$$
For an equation of the form \eqref{lin-inv-2ndODE} with $\hat{p}\ne 0$, $\xi, A, B$ given we take $\eta=(\xi'-\hat{p}\xi-A)/2$ and $y(x)=\sqrt{\xi}\exp[-(\int \hat{p}dx+A s)/2)]r(s)$.
\end{example}

The knowledge of invariance of an $n$-th order ODE under an $r$-parameter symmetry group can be useful in reducing in order more than once. But the full reduction to an equation of order $n-r$ can only be guaranteed if the symmetry group is solvable.

A Lie algebra $\lie$ is solvable if the derived series defined recursively by the chain of subalgebras
\begin{equation}\label{chain}
  \lie=\lie^{(0)}\supseteq \lie^{(1)}\ldots \supseteq \lie^{(k)} \supseteq \ldots, \quad \lie^{(k)}=[\lie^{(k-1)},\lie^{(k-1)}]
\end{equation}
terminates, namely there exists  $k\in \mathbb{N}$  such that $\lie^{(k)}=0$. The algebra of commutators $D\lie=\lie^{(1)}=[\lie,\lie]$ is called the derived algebra. For the solvable algebra $D^k\lie=\curl{0}$ holds.

\begin{theorem}[\cite{Olver1993}]
  Let $\mathsf{E}(x,y^{(n)})=0$ be an $n$-th order ODE. If $\mathsf{E}(x,y^{(n)})=0$ admits a solvable $r$-parameter group of symmetries $G$ such that , then the general solution of the equation can be found by quadratures from the general solution of an $(n-r)$-th order reduced ODE $\mathsf{E}/G$. In particular, if the ODE admits a solvable $n$-parameter group of symmetries, then the general solutions can be found by quadratures alone.
\end{theorem}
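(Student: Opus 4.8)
The plan is to peel the symmetries off one at a time, lowering the order by one at each step and consuming one dimension of $G$, and then to reconstruct a solution of $\mathsf{E}$ from a solution of $\mathsf{E}/G$ by $r$ successive quadratures; solvability is exactly what guarantees that after each reduction the reduced equation still carries a point symmetry to reduce by.

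First I would record the structural fact that does the work: a Lie algebra $\lie$ with $\dim\lie=r$ is solvable if and only if it admits a chain of subalgebras
$$\curl{0}=\lie_0\subset\lie_1\subset\lie_2\subset\dots\subset\lie_r=\lie,\qquad \dim\lie_j=j,$$
in which each $\lie_{j-1}$ is an ideal of $\lie_j$. Such a chain is produced by refining the derived series \eqref{chain}: each $\lie^{(i)}$ is an ideal of $\lie^{(i-1)}$ and $\lie^{(i-1)}/\lie^{(i)}$ is abelian, so interpolating the derived series by an arbitrary flag of subspaces yields subalgebras with the successive-ideal property, because $[\lie^{(i-1)},\lie^{(i-1)}]=\lie^{(i)}$ forces $[W,W']\subseteq W'$ whenever $\lie^{(i)}\subseteq W'\subseteq W\subseteq\lie^{(i-1)}$. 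Fix such a chain and choose $\mathbf{v}_j\in\lie_j\setminus\lie_{j-1}$, so $\lie_j=\Span\curl{\mathbf{v}_1,\dots,\mathbf{v}_j}$.

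The reduction then proceeds inductively. Reducing $\mathsf{E}$ by the one-parameter group generated by $\mathbf{v}_1$ produces, by the reduction theorem established at the beginning of this section, an ODE $\mathsf{E}/\lie_1$ of order $n-1$; in canonical coordinates $(r,s)$ for $\mathbf{v}_1$ it involves $s$ only through $z=ds/dr$ and its derivatives, so $s$ is recovered from any solution of $\mathsf{E}/\lie_1$ by the quadrature $s=\int z\,dr$. Suppose $\mathsf{E}/\lie_j$ of order $n-j$ has been built, $0\le j<r$, by iterated one-parameter reductions along $\lie_1\triangleleft\dots\triangleleft\lie_j$. Since $\lie_j$ is an ideal of $\lie_{j+1}$, every element of the subgroup corresponding to $\lie_{j+1}$ normalizes $\lie_j$, so $\Nor_G(\lie_j)\supseteq\lie_{j+1}$, and Theorem \ref{inherited}, applied to $\mathsf{E}$ and the $j$-parameter subgroup determined by $\lie_j$, tells us that $\mathsf{E}/\lie_j$ admits $\Nor_G(\lie_j)/\lie_j\supseteq\lie_{j+1}/\lie_j$ as a one-parameter symmetry group, generated by the image of $\mathbf{v}_{j+1}$. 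Reducing $\mathsf{E}/\lie_j$ by this inherited symmetry gives $\mathsf{E}/\lie_{j+1}$ of order $n-(j+1)$. After $r$ such steps we reach $\mathsf{E}/\lie_r=\mathsf{E}/G$ of order $n-r$.

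Finally I would run the chain backwards: a solution of $\mathsf{E}/\lie_{j+1}$ determines a solution of $\mathsf{E}/\lie_j$ by one quadrature (the coordinate removed at that step is $\int z\,dr$, as above), so a solution of $\mathsf{E}/G$ lifts to a solution of $\mathsf{E}$ by $r$ quadratures; for $r=n$ the reduced equation has order $0$ and the general solution of $\mathsf{E}$ is obtained by $n$ quadratures alone. The hard part will be the inheritance step, since Theorem \ref{inherited} as quoted covers only a single one-parameter reduction: one must either upgrade it to the assertion that $\mathsf{E}/H$ admits $\Nor_G(H)/H$ for the connected subgroup $H$ assembled from the normal chain, or check directly in canonical coordinates that performing the reductions in the order dictated by the solvable chain leaves a genuine point symmetry of every intermediate equation available (one should also keep track of the transversality hypothesis of the one-parameter reduction theorem). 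This is precisely where solvability is indispensable: for a non-solvable group such as the rotation group $\Ort(3,\mathbb{R})$ acting on a third-order ODE there is no chain with the successive-ideal property — indeed no two-dimensional subalgebra at all — so the reduction stalls after one step, and that case must instead be treated by the method of first integrals mentioned in Remark \ref{Lie-canonical}.
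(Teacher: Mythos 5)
The paper does not actually prove this theorem---it is quoted from \cite{Olver1993}---so the only thing to compare against is the informal strategy the paper sketches after Theorem \ref{inherited} and in the remark on three-dimensional solvable algebras. Your argument is exactly that strategy made precise: the refinement of the derived series \eqref{chain} into a flag $\lie_1\subset\dots\subset\lie_r$ with $\lie_{j-1}\triangleleft\lie_j$ is correct (your observation that $[W,W']\subseteq\lie^{(i)}\subseteq W'$ for any subspaces squeezed between consecutive terms of the derived series is the right justification), and the induction plus the reverse chain of $r$ quadratures is the standard proof.

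The one substantive issue is the inheritance step, and you are right to single it out as the hard part rather than wave it through by ``reiterating'' Theorem \ref{inherited}. The difficulty is that $\mathbf{v}_{j+1}$ normalizes $\lie_j$ but in general none of the smaller $\lie_i$ with $i<j$: in the paper's own $\e(2)$ example one has $[\mathbf{v}_3,\mathbf{v}_1]=-\mathbf{v}_2\notin\Span\curl{\mathbf{v}_1}$, and $\e(2)$ has no one-dimensional ideal at all, so no reordering of the flag fixes this. Consequently $\mathbf{v}_{j+1}$ genuinely ceases to be a point symmetry of the intermediate reductions $\mathsf{E}/\lie_i$---it becomes a nonlocal ``exponential'' vector field there, exactly as $\mathbf{\tilde{v}}_3$ does in the Schwarzian example---and only reappears as a local vector field at stage $j$. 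So the step you defer really does require the multi-parameter upgrade of Theorem \ref{inherited}: because $\lie_j\triangleleft\lie_{j+1}$, the flow of $\mathbf{v}_{j+1}$ permutes the $\lie_j$-orbits and therefore descends to a well-defined point symmetry of $\mathsf{E}/\lie_j$ when the latter is written in the joint differential invariants of $\lie_j$ (which coincide with the coordinates produced by your $j$ successive one-parameter reductions). With that lemma supplied---or with the top-down induction of \cite{Olver1993}, which reduces first by a codimension-one ideal $\mathfrak{h}\supseteq[\lie,\lie]$ and so only ever needs inheritance for a normal subgroup---your argument is complete, subject as you note to the genericity and transversality hypotheses of the one-parameter reduction theorem at each stage.
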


A solvable three-dimensional Lie algebra $\lie$ always contains a two-dimen\-sion\-al abelian ideal, which is unique up to conjugacy under inner automorphisms unless  $\lie$ is abelian or nilpotent. An integration strategy for an ODE with a  three-parameter solvable symmetry group  is to first reduce the equation by this two-dimensional ideal and then to use the remaining symmetry to complete the integration by quadratures.

\begin{example}\label{3rd-ord-quadrature}
We turn to Eq. \eqref{3rd-ODE}.
  As this equation admits a three-dimensional solvable  algebra $\lie=\e(2)$  as the symmetry algebra we can integrate it by three consecutive quadratures. The derived series of $\lie$ is
$$\lie\supset D\lie=\curl{\mathbf{v}_1,\mathbf{v}_2},  \quad D^2\lie=\lie^{(2)}=\curl{0}.$$

The third order differential invariants of the ideal $\curl{\mathbf{v}_1,\mathbf{v}_2}$ are $z=y_1$, $\rho=y_2$,  $\rho'(z)=d\rho/dz=z_2/z_1$, in terms of which Eq. \eqref{3rd-ODE} reduces to the first order ODE
\begin{equation}\label{1st-ODE}
  (1+z^2)\rho'(z)-3z\rho=\rho H(\kappa),  \quad \kappa=(1+z^2)^{-3/2}\rho.
\end{equation}
This equation should retain the final (inherited) symmetry $\mathbf{v}_3$, which, in terms of $z, \rho$, has the reduced form (from restriction of $\pr{2}\mathbf{v}_3$ to $z, \rho$ coordinates)
$$\tilde{\mathbf{v}}_3=(1+z^2)\gen z+3z\rho\gen \rho.$$ The coordinates $\kappa=(1+z^2)^{-3/2}\rho$, $\chi=\arctan z$ rectifies the vector field $\tilde{\mathbf{v}}_3=\gen z$. In terms of $\kappa$, $\chi$, \eqref{1st-ODE} becomes a separable equation
\begin{equation}\label{separ}
 \frac{d\kappa}{d\chi}=\kappa H(\kappa)
\end{equation}
with implicit solution $ \hat{H}(\kappa)=\chi+c_1$ or solving for $\rho=y_2=G(y_1,c_1)$, which is invariant under the translational algebra $\curl{\mathbf{v}_1,\mathbf{v}_2}$ and can be integrated by two further quadratures.

The special case $H=\lambda=\mbox{const.}$ leads to the similitude invariant equation
$$(1+y_1^2)y_3=(3y_1+\lambda)y_2^2$$
with the additional symmetry $\mathbf{v}_4=x\gen x+y\gen y$. From \eqref{separ}, the solution of the reduced equation is $\kappa=c_1 e^{\lambda \chi}$, and with the original variables, it is the second order ODE
$$y_2=c_1 (1+y_1^2)^{3/2}\exp\curl{\lambda \arctan y_1}.$$

For $\lambda=0$, the solutions are curves with the constant curvature $\kappa=c_1$, i.e. the family of circles with radius $c_1^{-1}$
$$  (x-c_2)^2+(y-c_3)^2=c_1^{-2}.$$
In this case the equation admits  two further additional symmetries
$$\mathbf{v}_5=(x^2-y^2)\gen x+2xy\gen y, \quad  \mathbf{v}_6=2xy\gen x+(y^2-x^2)\gen y.$$ The  maximal symmetry algebra of the equation is the six-dimensional Lorentz algebra $\ort(3,1)$ (see also Example \ref{Lap}).

When $\lambda\ne 0$, the corresponding second order ODE can be integrated using two-parameter translational group. More conveniently, a parametric solution can be produced by the introduction of the parametrization $\tau=\arctan y_1$ in the form
$$x(\tau)=c_1 e^{-\lambda \tau}(\lambda \cos \tau-\sin \tau)+c_2,  \quad y(\tau)=c_1 e^{-\lambda \tau}(\lambda \sin \tau+\cos \tau)+c_3.$$

To see what happens when the symmetry group is not solvable we consider the following example of a third-order ODE, known as the Schwarzian equation,
\begin{equation}\label{Schwarz-ODE}
  \frac{y_3}{y_1}-\frac{3}{2}\left(\frac{y_2}{y_1}\right)^2=F(x),
\end{equation}
where $F$ is any function of its argument,
admitting the nonsolvable symmetry group $\SL(2,\mathbb{R})$, with Lie algebra having the basis
\begin{equation}\label{sl2-y}
  \mathbf{v}_1=\gen y,  \quad \mathbf{v}_2=y\gen y, \quad \mathbf{v}_3=y^2\gen y.
\end{equation}
The corresponding Lie group $\SL(2,\mathbb{R})$ is the group of linear fractional transformations
$$(x,y)\to \left(x,\frac{a y+b}{c y+d}\right), \qquad
\begin{pmatrix}
a & b\\c & d
\end{pmatrix}
\in \SL(2,\mathbb{R}).$$
The expression on the left-hand side of \eqref{Schwarz-ODE} is called the Schwarzian derivative of $y$ with respect to $x$ and is denoted by the symbol $\curl{y,x}$. It is invariant under the M\"{o}bius transformation in $y$:
$$\curl{\frac{ay+b}{cy+d};x}=\curl{y,x}, \quad ad-bc=1$$ (a unique differential invariant of order $\leq 3$ of the algebra \eqref{sl2-y}).
A two-dimensional solvable subalgebra $\lie_0$ generated by $\curl{\mathbf{v}_1,\mathbf{v}_2}$ can be used to reduce the equation to  one of  first order. The second-order differential invariants of $\lie_0$ are $x$ and $w=y_2/y_1$, in terms of which the reduced equation becomes
$$\frac{dw}{dx}=\frac{1}{2}w^2+F(x),$$ which is recognized as a Riccati equation. This equation does not inherit the symmetry $\mathbf{v}_3$ of the original equation. Indeed, the reduced vector field $\mathbf{\tilde{v}}_3$ in terms of the invariants $x$ and $w$
is nonlocal (the so-called exponential vector field)
$$\mathbf{\tilde{v}}_3=2e^{\int w dx}\gen w.$$ On the other hand, the well-known Hopf--Cole transformation $w=-2\frac{\theta'(x)}{\theta(x)}$ ($y_1=y'=\theta^{-2}$) linearizes to
\begin{equation}\label{Riccati}
  \theta''+\frac{F(x)}{2}\theta=0.
\end{equation}
Let $\theta$ and $\psi$ be two independent solutions of \eqref{Riccati}. Then, we have $\psi/\theta=W\int \theta^{-2} dx+c$, where $W$ is the (constant) Wronskian of $\theta$ and $\psi$, (we can put $W=1$ by scaling $\psi$) and $c$ is an arbitrary constant, that can be absorbed to $\psi$ so that we can put $c=0$ without loss of generality. The  solution $y$ can now be expressed as a ratio $y=\psi/\theta$ of two linearly independent solutions to \eqref{Riccati}.

When $F(x)=0$ (known as the Kummer--Schwarz equation; this equation is also encountered
in the study of geodesic curves in spaces of constant curvature), the symmetry algebra $\lie$ becomes six-dimensional and has a direct-sum structure $\lie=\Sl(2,\mathbb{R})\oplus \Sl(2,\mathbb{R})$ spanned by the vector fields \eqref{sl2-y} and
\begin{equation}\label{sl2-x}
 \mathbf{v}_4=\gen x,  \quad \mathbf{v}_5=x\gen x, \quad \mathbf{v}_6=x^2\gen x.
\end{equation}
The symmetry group is then a linear fractional group of both $x$ and $y$ coordinates.
The corresponding solution is the linear fractional (or M\"obius) transformation in $x$: $y=(ax+b)/(cx+d)$, $ad-bc\ne 0$.
\end{example}

\begin{remark}
There are only two types of third order ODE admitting 6-dimensional symmetry groups:
$$(1+y_1^2)y_3=3y_1y_2^2 \quad  \mbox{and} \quad  \curl{y,x}=0.$$ The maximal symmetry algebra for a third order ODE is seven-dimensional and is attained if and only if it is equivalent to the linear equation $y_3=0$  up to a linear point (equivalence) transformation.

In general, for any  ODE of  order $n\geq 3$,  the symmetry group has  dimension $\leq n+4$. The maximal dimension $n+4$ is attained if and only if the equation is linear or linearizable by a point transformation.
\end{remark}

\subsection{Group classification problem}\label{group-class-prob}
When a system involves an arbitrary parameter or arbitrary function, the symmetry group can have a richer symmetry for certain specific forms of these arbitrary terms. The problem of identifying such arbitrary parameters or functions is known as the group classification problem. When there are only parameters or arbitrary functions of a single argument of independent or dependent variable then the problem is easily tackled solving the determining system where splitting is almost always possible. The arbitrary functions are found by solving an ODE (the so-called classifying ODE). The situation becomes rather complicated when the system depends on arbitrary functions of more than one argument. In this case, the group classification problem is often solved by reducing it to the classification of  realizations of low-dimensional abstract Lie algebras  combined with the notion of equivalence group and the knowledge of abstract Lie theory.
\begin{example}\label{two-dim-symm-ex}
A classification problem: We wish to determine all possible forms of $F(y)$ for which the following ODE allows a two-dimensional symmetry algebra:
\begin{equation}\label{ode}
  y''=\mu y'+F(y), \quad \mu\not= 0, \quad F''\ne 0.
\end{equation}
Under the reflection $x\to -x$, we have $\mu\to -\mu$ so we can assume  $\mu>0$ without loss of generality. The special nonlinearity $F(y)=a y^3+b y$ is recognized as Duffing's equation (see page \pageref{Duffing}).
The trivial case $\mu=0$ is clearly integrable. It has the energy first integral (Hamiltonian)
$$H=\frac{1}{2}y'^2+V(y)=\text{constant},  \quad F(y)=-V'(y).$$

The symmetry classification for $\mu\ne 0$ naturally characterizes all possible integrable cases.
Eq. \eqref{ode} arises in obtaining travelling-wave solutions of the nonlinear heat (diffusion) equation
\begin{equation}\label{NLH}
  u_t=u_{xx}+F(u), \quad F''\ne 0.
\end{equation}
These are solutions of the form $u(x,t)=y(z)=y(x-ct)$, being invariant under a combination of time and space translational symmetries. In what follows we take $z\equiv x$.

We can put $\mu=1$ in \eqref{ode} by scaling the independent variable $x\to \mu^{-1}x$ and redefining   $\mu^{-2}F$ as $F$.
The equation is invariant under translation of $x$ so that $\mathbf{v}_1=\gen x$ generates a symmetry. For certain functions $F$, the symmetry group will be two-dimensional.
The general symmetry algebra is generated by vector fields of the form
\begin{equation}\label{vf-1}
  \mathbf{v}=\xi(x,y)\gen x+\eta(x,y)\gen y.
\end{equation}
The second prolongation formula for vector field $\mathbf{v}$ is
$$\pr{2}\mathbf{v}=\xi(x,y)\gen x+\eta(x,y)\gen y+\eta^{(1)}(x,y,y')\gen{y'}+\eta^{(2)}(x,y,y',y'')\gen{y''},$$ where
$$\eta^{(1)}=D_x\eta-y'D_x\xi,  \quad \eta^{(2)}=D_x\eta^{(1)}-y''D_x\xi.$$ Higher order prolongation coefficients can be calculated from the recursion formula \eqref{recurs-p-q-1}
$$\eta^{(k)}=D_x\eta^{(k-1)}-y_kD_x\xi, \quad y_k=y^{(k)},$$ or in terms of the characteristic $Q(x,y,y')=\eta-y' \xi$ as
$$\eta^{(k)}=D^{k}_xQ+\xi y_{k+1}.$$
From the infinitesimal symmetry requirement \eqref{inv-criter} we find $\eta^{(2)}-\eta^{(1)}-\eta F'=0$ on solutions. This condition, replacing $y''$ by $y'+F(y)$, is a cubic polynomial in the first derivative $y'$ with coefficients depending on $x$ and $y$.  Setting these coefficients  equal to zero we obtain the  determining system for the coefficients $\xi$, $\eta$
\begin{eqnarray}
  3 F \xi_y+\xi_x-2\eta_{xy}+\xi_{xx} &=& 0, \\
  2 \xi_y-\eta_{yy}+2\xi_{xy} &=& 0, \\
   \xi_{yy} & = &0,\\
  F'\eta-(\eta_y-2\xi_x)F-\eta_{xx}+\eta_x  & =  & 0.
   \end{eqnarray}
Differentiating twice the  first two equations with respect to $y$ and using the third one  and the condition $F''\ne 0$ ($F$ is not linear) we find
$$\xi_y=0,  \quad \eta_{yy}=0.$$ So this gives $\xi=b(x)$ and $\eta=c(x)y+d(x)$, and then \eqref{vf-1} takes the form
\begin{equation}\label{vf-1-2}
  \mathbf{v}=b(x)\gen x+[c(x)y+d(x)]\gen y, \quad b\ne 0.
\end{equation}

The determining equations are then reduced to solving the classifying ODE
\begin{equation}\label{code}
  (cy+d)F'-(c-2b')F=(c''-c')y+d''-d'
\end{equation}
with the relation
\begin{equation}\label{rel-1}
  2c'-(b'+b'')=0
\end{equation}
from which we have
\begin{equation}\label{rel-2}
  c=\frac{1}{2}(L+b'+b),
\end{equation}
where $L$ is a constant.

Now we shall require the equation under study be invariant under a two-dimensional symmetry algebra with commutation relation
\begin{equation}\label{comm}
  [\mathbf{v}_1,\mathbf{v}_2]=k\mathbf{v}_2,  \quad \mathbf{v}_1=\gen x.
\end{equation}
The case $k=0$ is abelian.

We shall find all possible forms of $\mathbf{v}_2$ that obey the commutation relation and the corresponding $F$ (not linear in $y$). The commutation relation implies that we must have
$$b'=kb,  \quad c'=kc, \quad d'=kd, $$ and then
$$b=b_0e^{kx}, \quad c=c_0e^{kx}, \quad d=d_0e^{kx},$$
$$c''-c'=c_0k(k-1)e^{kx}, \quad d''-d'=d_0k(k-1)e^{kx}.$$ So we have
\begin{equation}\label{v2}
  \mathbf{v}_2=e^{kx}[b_0 \gen x+(c_0 y+d_0)\gen y].
\end{equation}
The classifying ODE then becomes
\begin{equation}\label{code-2}
  (c_0y+d_0)F'-(c_0-2kb_0)F=k(k-1)(c_0y+d_0),
\end{equation}
where $b_0, c_0, d_0$ are constants. The relation \eqref{rel-1} gives
\begin{equation}\label{constr}
  k[2c_0-(k+1)b_0]=0.
\end{equation}

If $k=0$ then  $b$, $c$ and $d$ are arbitrary constants and $F$ should be linear in $y$ (This is seen from differentiation of \eqref{code-2}). This means the symmetry algebra should be nonabelian.

If $k=-1$ then $c_0=0$ and $b_0$ is  arbitrary. $b_0$ cannot be zero, otherwise we would have $\mathbf{v}_2=d_0 \mathbf{v}_1$. So we can put $b_0=1$. The ODE becomes now
$$F'-\beta F=2, \quad d_0=\frac{2}{\beta}, \quad \beta\ne 0,$$ which integrates to $F(y)=\alpha e^{\beta y}-\frac{2}{\beta}$. The  symmetry algebra  spanned by
\begin{equation}\label{sym-k=-1}
  \mathbf{v}_1=\gen x,  \quad \mathbf{v}_2=e^{-x}[\gen x+\frac{2}{\beta}\gen y]
\end{equation}
leaves invariant
\begin{equation}\label{eq-k=-1}
  y''=y'+\alpha e^{\beta y}-\frac{2}{\beta}.
\end{equation}
By scaling transformation $y\to (\beta/2) y$,  followed  by translation $y\to y+(1/2)\ln {\alpha}$, $\alpha>0$, we can put $\beta=2$, and $\alpha=1$, respectively. Our representative equation simplifies to
\begin{equation}\label{eq-k=-1-simple}
  y''=y'+ e^{2y}-1.
\end{equation}

If $k=1$ then we have $b_0=c_0\ne 0$ and
$$(y+\delta)F'+F=0, \quad \delta=\frac{d_0}{b_0}.$$ The corresponding $F$ and the symmetry algebra are given by
$$F=\alpha(y+\delta)^{-1}, \quad \mathbf{v}_1=\gen x,  \quad \mathbf{v}_2=e^x[\gen x+(y+\delta)\gen y].$$
Now we let $k\ne 0, -1$: If we assume $c_0\ne 0$, from \eqref{rel-1} we have
$$b_0=\frac{2c_0}{k+1}\ne 0.$$  If we put $d_0=b_0\frac{\delta(k+1)}{2}=c_0\delta$, the ODE \eqref{code-2} can be written as
\begin{equation}\label{code-3}
 (y+\delta)F'-\gamma F=\frac{2(\gamma^2-1)}{(\gamma+3)^2}(y+\delta), \quad \gamma=\frac{1-3k}{k+1}.
\end{equation}
The solution of the ODE \eqref{code-3} is given by
\begin{equation}\label{sol-nonl}
  F=M(y+\delta)^{\gamma}-\frac{2(\gamma+1)}{(\gamma+3)^2}(y+\delta),\quad \gamma\ne 1,-3.
\end{equation}
The symmetry algebra is given by
\begin{equation}\label{sym-k=other}
  \mathbf{v}_1=\gen x, \quad \mathbf{v}_2=e^{kx}\left[\gen x+\frac{k+1}{2}(y+\delta)\gen y\right], \quad k=\frac{1-\gamma}{\gamma+3}.
\end{equation}
Note that the above result also contains the subcase $k=1$ (but not $k=-1$). By change of basis
$\mathbf{v}_1\to -\mathbf{v}_1$, $\mathbf{v}_2\to \mathbf{v}_2$ and $\mathbf{v}_1\to k\mathbf{v}_1$, $\mathbf{v}_2\to \mathbf{v}_2$ the commutation relation takes the standard form for a two-dimensional nonabelian algebra: $[\mathbf{v}_1,\mathbf{v}_2]=\mathbf{v}_2$.

The form of  \eqref{sol-nonl} is not changed by the linear transformation $\tilde{y}=py+q$. The arbitrary coefficient $F(y)$ gets changed into
$ \tilde{F}(\tilde{y})=pF(\frac{\tilde{y}-q}{p}) $. By a suitable choice of $p$ and $q$ we can set $M=1$ and $\delta=0$.

The two-dimensional symmetry algebra  makes reduction of the original equation possible to quadratures by methods of differential invariants or canonical coordinates. We consider the case $k\not=-1$ with two-parameter symmetry group generated by
\begin{equation}\label{2-param-sym}
  \mathbf{v}_1=\gen x,  \quad \mathbf{v}_2=e^{kx}\left[\gen x+\frac{k+1}{2}y\gen y\right].
\end{equation}
The function $F$ in the invariant ODE \eqref{ode} is given by
\begin{equation}\label{nonlinearity}
  F(y)=y^{\gamma}+\frac{(k^2-1)}{4}y, \quad \gamma=\frac{1-3k}{k+1}.
\end{equation}
The above relation between $\gamma$ and $k$ is a reemergence  of the integrability condition \eqref{int-cond} discussed in Example \eqref{integ-2ndODE}. Compare this equation with \eqref{integ-2ndODE}.

According to Theorem \ref{inherited}, we start to reduce by the normal subalgebra $\mathbf{v}_2$, which,  in terms of the coordinates
\begin{equation}\label{rect-coord}
  s=k^{-1}e^{-kx}, \quad r=ye^{-(k+1)x/2},
\end{equation}
has the canonical form $\tilde{\mathbf{v}}_2=-\gen s$. The transformed equation becomes
\begin{equation}\label{trans-ode}
  \frac{d^2r}{ds^2}=e^{\frac{(3k-1)}{2}x}\left[-\frac{k^2-1}{4}y+F(y)\right]=r^{\gamma},
\end{equation}
which retains the symmetry $\mathbf{v}_1$. In terms of the invariants $r$ and $\rho=dr/ds$ of $\mathbf{v}_1$, it reduces to the first order ODE
$\rho d\rho/dr=r^{\gamma}$ with solution $\rho^2=2\ln r+c_1$ if $\gamma=-1$; $\rho^2=2(\gamma+1)^{-1}r^{\gamma+1}+c_1$,  otherwise. The general solution is obtained implicitly after a second quadrature from $(dr/ds)^2=\rho^2(r)$. The alterative way is to integrate directly using the integrating factor $I=r'(s)$. When $\gamma=2,3$, the solutions are obtained in terms of Jacobi elliptic functions or Weierstrass $\wp$ function, when $\gamma>3$ in terms of hyperelliptic integrals.

The case $k=-1$ is treated similarly. The canonical coordinates $r=y-x$, $s=e^{x}$ maps Eq. \eqref{eq-k=-1-simple} to $r''(s)=e^{2r}$ with symmetry $\curl{\gen s,s\gen s-\gen r}$. The corresponding solution is given by
$$y=x+\ln\curl{\frac{2\sqrt{c_1}c_2}{\exp\curl{-\sqrt{c_1}e^x}-c_2^2\exp\curl{\sqrt{c_1}e^x}}}, \quad c_1>0$$
or
$$y=x+\ln\curl{\sqrt{-c_1}\sec[\sqrt{-c_1}(e^x+c_2)]}, \quad c_1<0.$$

\begin{remark}
We note that Example \ref{two-dim-symm-ex} does not provide a complete group (or symmetry) classification. Rather, it identifies classes of $F(y)$ for which the equation is invariant only under  a two-dimensional symmetry group which is sufficient for reduction to quadratures.

For an equation depending on arbitrary functions of more than a single variable, to solve the problem of complete classification with respect to its Lie point symmetries it is imperative to use equivalence group of the equation.
In this framework, let us consider the group classification  of the more general equation
\begin{equation}\label{gen-ode}
  y''=f(x,y).
\end{equation}
Due to the absence of the first derivative $y'$ in $f$, invariance under the general symmetry group generated by \eqref{vf-1} constrains considerably infinitesimal generators $\xi, \eta$
\begin{equation}\label{vf-gen-ode}
  \xi(x,y)=\alpha(x)y+\xi(x),  \quad \eta(x,y)=\alpha'(x)y^2+\eta(x)y+\beta(x),
\end{equation}
where the functions $\alpha(x),\beta(x), \xi(x), \eta(x)$ are subject to the following equations
\begin{equation}\label{deqs-gen-ode}
 \begin{split}
    &  \xi''-2\eta'+3(\alpha f-\alpha'' y)=0,\\
     & \xi f_x+\eta f_y=(\eta-2\xi')f+\alpha'''y^2+\eta'' y+\beta''.
 \end{split}
\end{equation}
From the first equation by taking differentiation twice with respect to $y$ we find
\begin{equation}\label{split-f}
  \alpha f_{yy}=0.
\end{equation}
Hence, if $\alpha=0$, then $\xi''-2\eta'=0$ and $\eta=\xi'/2+c_0$.  We have a single classification relation between $f$, $\xi$ and $\beta$.
\begin{equation}\label{class-f}
 \xi(x)f_x+(\eta(x) y+\beta(x))f_y=(\eta-2\xi')f+\eta'' y+\beta''.
\end{equation}
For given $\xi$, $\beta$ and $c_0$, the first order linear PDE \eqref{class-f} can be integrated using method of characteristics to find the most general form of $f$. For $c_0=0$, $f(x,y)$ is given by
\begin{equation}\label{most-gen-f}
  f(x,y)=\frac{(\sqrt{\xi})''}{\sqrt{\xi}}y+\frac{1}{\sqrt{\xi}}\left(\frac{\beta}{\sqrt{\xi}}\right)'
 + \xi^{-3/2}F(\sigma),  \quad \sigma=\frac{y}{\sqrt{\xi}}-\int \xi^{-3/2}\beta(x)dx,
\end{equation}
where $F$ ($F''\ne 0$) is an arbitrary function of its argument. We note that for $\xi=x^2$,  $\beta=0$ we recover  \eqref{inv-group-inv-ode} with the arbitrary function $H$ independent of the differential invariant $w$.

Imposing a further symmetry on the equation \eqref{most-gen-f}   one can generate a second order differential equation   invariant under a two-parameter group of symmetries with the arbitrary function $F$  restricted. As an example, we require invariance under the scaling  group $(x,y)\to (\lambda x, \lambda^{k}y)$, $\lambda>0$ and pick $\xi, \beta$ for $c_0=0$ such that the generators
\begin{equation}\label{2-dim-alg}
  \mathbf{v}_1=\xi\gen x+(\frac{\xi'}{2}y+\beta)\gen y,  \quad \mathbf{v}_2=x\gen x+k y\gen y
\end{equation}
form a two-dimensional Lie algebra with commutator
$$ [\mathbf{v}_1,\mathbf{v}_2]=(\xi-x \xi')\gen x+\left(k \beta-x \beta'-\frac{\xi''}{2}x y\right)\gen y=-\mathbf{v}_1,$$
which implies
$$x\xi'-2\xi=0,  \quad (k+1)\beta-x \beta'+\frac{1}{2}(\xi'-x \xi'')y=0$$
with solutions
$\xi=x^2$, $\beta=\delta x^{k+1}$  so we find the generators
$$\mathbf{v}_1=x^2\gen x+(xy+\delta x^{k+1})\gen y,  \quad \mathbf{v}_2=x\gen x+k y \gen y.$$ With these choices of $\xi, \beta$, invariance under $\mathbf{v}_2$ will restrict the arbitrary $F$ to
\begin{equation}\label{spec-F}
  F(\sigma)=F_0 \sigma^{(k+1)/(k-1)}, \quad \sigma=\frac{y}{x}-\frac{\delta}{k-1}x^{k-1}, \quad k\ne 1.
\end{equation}
In conclusion, the corresponding  invariant differential equation becomes
\begin{equation}\label{2-param-inv-ode}
  y''=\delta k x^{k-2}+F_0x^{-3}\sigma^{(k+1)/(k-1)}
\end{equation}
and can be integrated by two quadratures in the order of $\mathbf{v}_1, \mathbf{v}_2$.  The special case $\delta=0$ of \eqref{2-param-inv-ode} agrees with the Emden--Fowler equation \eqref{EF-eq} for $n=(k+1)/(k-1)$ and in particular with \eqref{minus5} for $k=2$.

When $k=1$,
\begin{equation}\label{spec-F-1}
  F(\sigma)=F_0 e^{-(2/\delta)\sigma},  \quad \sigma=\frac{y}{x}-\delta \log x,  \quad \delta\ne 0
\end{equation}
the  equation assumes the form
\begin{equation}\label{2-param-inv-ode-2}
  y''=x^{-1}\left[\delta+F_0 e^{-(2/\delta)(y/ x)}\right],
\end{equation}
invariant under a rank-2 Lie algebra with generators
$$\mathbf{v}_1=x^2\gen x+x(\delta x+y)\gen y,  \quad \mathbf{v}_2=x\gen x+ y\gen y.$$
Using the normal coordinates $r=\sigma$ and $s=-1/x$ of $\mathbf{v}_1$ satisfying $\mathbf{v}_1(\sigma)=0$, $\mathbf{v}_1(s)=1$ we can write \eqref{2-param-inv-ode-2} as
$$\frac{d^2 \sigma}{ds^2}=F_0 e^{-(2/\delta) \sigma},$$
which is certainly invariant under $\tilde{\mathbf{v}}_1=\gen s$ and $\tilde{\mathbf{v}}_2=s\gen s+\delta \gen \sigma$.

We can form an equivalent realization of the Lie algebra spanned by the elements \eqref{2-dim-alg}  satisfying the commutation relation
\begin{equation}\label{comm-two-dim}
  [\mathbf{v}_1,\mathbf{v}_2]=(\xi-x \xi')\gen x+\left(k \beta-x \beta'-\frac{\xi''}{2}x y\right)\gen y=(1-m)\mathbf{v}_1.
\end{equation}
In this case, we have
$$\xi=x^m,  \quad \beta=\delta x^{k+m-1}$$  for which Eq. \eqref{most-gen-f} remains remains invariant under $\mathbf{v}_2$ when $F$ is limited to a power function of $\sigma$
\begin{equation}\label{spec-F-2}
  F=F_0\sigma^{\nu},  \quad \sigma=x^{-m/2}\left(y+\frac{2\delta x^k}{m-2k}\right), \quad \nu=\frac{4-3m-2k}{m-2k}, \quad m\ne 2k.
\end{equation}
The corresponding invariant equation is significantly simplified for $\nu=2$ or $m=2/5(k+2)$ ($k\ne 1/2$). In this case, there are two  values of the pairs $(m,k)$, that are worth examining.

First we choose $(m,k)=(6/7,1/7)$ so that $\nu=2$, $\beta=\delta$ (constant). After some calculation, we find the invariant equation as
\begin{equation}\label{inveq-1}
  y''=F_0 x^{-15/7}y^2+\frac{(343F_0 \delta-12)}{196x^2}(7\delta x^{1/7}+4y).
\end{equation}
This equation is further simplified if we choose $\delta=12/(343 F_0)$
\begin{equation}\label{nu-two-1}
  y''=F_0 x^{-15/7}y^2
\end{equation}
admitting the symmetry algebra
$$\mathbf{v}_1=x^{6/7}\gen x+(\frac{3}{7}x^{-1/7}y+\frac{12}{343 F_0})\gen y, \quad \mathbf{v}_2=x\gen x+\frac{1}{7}y \gen y$$ with commutation relation $[\mathbf{v}_1,\mathbf{v}_2]=1/7 \mathbf{v}_1$.

In a similar way, the choice $(m,k)=(8/7,6/7)$ so that $\nu=2$, $\beta=\delta x$ leads to the invariant equation
\begin{equation}\label{inveq-2}
  y''=F_0 x^{-20/7}y^2+\frac{(343F_0 \delta+12)}{196x^2}(7\delta x^{6/7}-4y).
\end{equation}
Again the choice $\delta=-12/(343 F_0)$ gives the invariant equation
\begin{equation}\label{nu-two-2}
 y''=F_0 x^{-20/7}y^2
\end{equation}
admitting the symmetry algebra
$$\mathbf{v}_1=x^{8/7}\gen x+(\frac{4}{7}x^{1/7}y-\frac{12}{343 F_0}x)\gen y, \quad \mathbf{v}_2=x\gen x+\frac{6}{7}y \gen y$$
with commutation relation $[\mathbf{v}_1,\mathbf{v}_2]=-1/7 \mathbf{v}_1$.
We recall that all possible forms of the equation $y''=F_0 x^n y^2$ invariant under a two-dimensional nonabelian symmetry algebra are attained only for three values of the exponents $n\in \curl{-5, -15/7, -20/7}$  (see page \pageref{n=2}).

In the Abelian case $[\mathbf{v}_1,\mathbf{v}_2]=0$ ($\nu=1$),  we find $\xi=x$, $\beta=\delta x^k$  and this leads to a linear equation.

On the other hand, if $f$ has some specific form, this linear PDE allows us to find all possible symmetries.  For example, if $f(x,y)=q(x)y^2$, then  we find that $\xi$, $\beta$ should satisfy
$$\xi'''=4q\beta,  \quad \beta''=0,  \quad 2\xi q'+(5\xi'+2c_0)q=0.$$
Once $\xi$, $\beta$ are solved form the first two equations, the last one serves as a compatibility condition.  For  the choice $q=e^x$, it can be shown that there is only one symmetry given by $\mathbf{v}=\gen x-y\gen y$ which reduces the corresponding equation to an Abel equation of second type. For another special case $q=Kx^{m}$ see the discussion on page \pageref{n=2}.

In the nonlinear case  $f_{yy}\ne 0$, the complete group classification of \eqref{gen-ode}
was carried out in \cite{Ovsyannikov2004}. The interested reader is referred to this paper for the details of the classification procedure. We here present the complete classification results in Table \ref{table-class}.

If $\alpha$ is not identically zero then  $f_{yy}=0$ (linear ODE). The pair of equations  \eqref{deqs-gen-ode} for $f=-q(x)y$ by splitting with respect to powers of $y$ gives:
$$\eta(x)=\frac{1}{2}\xi'(x)+c_0, \quad \xi'''+4q\xi'+2q' \xi=0,$$
and $\alpha, \beta$ are  solutions to the original linear ODE  $y''+q(x)y=0$. Consequently, we find that infinitesimals depend on eight arbitrary constants. We recall that general solution of $\xi$ can be expressed in terms of two linearly independent solutions of the original ODE (see formula \eqref{iterate-sol}). We list the eight vector fields found above as
\begin{equation}\label{eight-vf}
  \begin{split}
     &  \mathbf{v}_1=y y_1\gen x+y_1' y^2 \gen y, \quad  \mathbf{v}_2=y y_2\gen x+y_2' y^2 \gen y, \quad \mathbf{v}_3=y\gen y,\\
      & \mathbf{v}_4=y_1^2\gen x+y_1y_1' y \gen y, \quad \mathbf{v}_5=y_1y_2\gen x+\frac{1}{2}(y_1y_2)' y \gen y, \quad \mathbf{v}_6=y_2^2\gen x+y_2y_2' y \gen y,\\
      & \mathbf{v}_7=y_1\gen y,  \quad \mathbf{v}_8=y_2\gen y,
  \end{split}
\end{equation}
where $y_1,y_2$ are solutions to the ODE $y''+q(x)y=0$. They span a Lie algebra isomorphic to the  $\Sl(3,\mathbb{R})$ algebra.

For the special case $q=0$ we have $y_1=1$, $y_2=x$ and upon taking a suitable combination of $\mathbf{v}_5$ with $\mathbf{v}_3$, the vector fields \eqref{eight-vf} turn to the standard basis of $\Sl(3,\mathbb{R})$ in the form
\begin{equation}\label{standard-sl3}
  \begin{split}
     &  \mathbf{v}_1=y \gen x, \quad  \mathbf{v}_2=xy\gen x+ y^2 \gen y, \quad \mathbf{v}_3=y\gen y,\\
      & \mathbf{v}_4=\gen x, \quad \mathbf{v}_5=x\gen x, \quad \mathbf{v}_6=x^2\gen x+x y \gen y,\\
      & \mathbf{v}_7=\gen y,  \quad \mathbf{v}_8=x\gen y.
  \end{split}
\end{equation}
This algebra is admitted if and only if a second order ODE is either linear or linearizable by a point transformation. See Example \ref{ex-2nd-chain}.


We comment that the nonhomogeneous linear ODE
$$y''+q(x)y=r(x)$$
remains invariant under the $\Sl(2,\mathbb{R})$ subalgebra generated by
$$\mathbf{v}=\xi(x)\gen x+\frac{\xi'(x)}{2}y\gen y$$
if $r(x)=r_0 \xi(x)^{-3/2}$.

Moreover, the change of independent variable $\tau=e^{\mu x}$ converts Eq. \eqref{ode} to $$y_{\tau\tau}=\tau^{-2}f(y), $$
where  $f(y(\tau))$  should be obtained from $F(y(x))$  by the inverse transformation. This equivalent equation belongs to the class \eqref{gen-ode}. For nonconstant $\mu=A(x)$, the classification problem was solved in \cite{Ndogmo2010}.

Finally, we comment that the quadratic Li\'{e}nard equation
\begin{equation}\label{q-lienard}
  y''+f(y)y'^2+g(y)=0,
\end{equation}
where $f(y)$ and $g(y)$ are arbitrary smooth functions of $y$, can be transformed to a subclass of \eqref{gen-ode} of the form
\begin{equation}\label{change-depend}
  Y''(x)+G(Y)=0,  \quad G(Y)=W'(y)g(y)
\end{equation}
by the invertible change of dependent variable $Y=W(y)$, $W'(y)\ne 0$ where $W(y)$ is chosen as a solution of $W''-f(y)W'=0$. So complete group classification of \eqref{q-lienard} can be extracted from Table \ref{table-class}.

However, the classification problem for the standard Li\'{e}nard equation, which is a possible extension of \eqref{ode}
\begin{equation}\label{lienard}
  y''+f(y)y'+g(y)=0
\end{equation}
is more complicated. Such a classification, but without effective use of the equivalence group to obtain the simplest forms of $f$ and $g$ and the corresponding symmetry algebras,  was   treated in \cite{PandeyBinduSenthilvelanLakshmanan2009}  excluding  linearizable equations that occur when $f''=0$ and $3g'-f^2=\text{const.}$ (see Example \eqref{ex-2nd-chain} and Subsection \ref{test}).  In the nonlinearizable case, when $f\ne 0$, the maximal symmetry algebra is two-dimensional (The trivial symmetry $\mathbf{v}_1=\gen x$ is extended by an additional symmetry $\mathbf{v}_2$ of the form \eqref{vf-1-2}). The algebra can not be abelian, otherwise the equation would have to be constant coefficient linear.  A remarkable exception occurs  when $f=0$ (see Table \ref{table-class} and formulae \eqref{notable-case}-\eqref{3-dim-sym}).

An alternative classification of \eqref{lienard} can be done by solving the inverse problem: Require invariance under \eqref{v2} and split the resulting condition with respect to $y'$. We distinguish between two cases:

I. $c_0\ne 0$, $b_0\ne 0$: Without loss of generality we put $b_0=1$ and $d_0=0$. We find that $f(y)$ and $g(y)$ satisfy the first order linear ODEs
\begin{equation}\label{f-g-1-sys}
  \begin{split}
     & c_0y f'+kf+k(2c_0-k)=0, \\
      & c_0y g'+(2k-c_0)g=-c_0kyf-c_0k^2y.
  \end{split}
\end{equation}
Solving this system, we obtain
\begin{equation}\label{f-g-1}
  f(y)=C_1 y^{-k/c_0}+k-2c_0,  \quad g(y)=y\left[C_2 y^{-2k/c_0}-c_0C_1y^{-k/c_0}+c_0^2-k c_0\right],
\end{equation}
where $C_1, C_2$ are integration constants.
Letting $1-(2k)/(c_0)=\gamma\ne 1$ we can re-express \eqref{f-g-1} in the form
\begin{equation}\label{f-g-1-2}
 f(y)=C_1 y^{(\gamma-1)/2}+\frac{k(\gamma+3)}{\gamma-1}, \quad g(y)=C_2 y^{\gamma}+\frac{2kC_1}{\gamma-1}y^{(\gamma+1)/2}+\frac{2k^2(\gamma+1)}{(1-\gamma)^2}y
\end{equation}
allowing the two-dimensional nonabelian symmetry algebra spanned by the vector fields
$$\mathbf{v}_1=\gen x, \quad \mathbf{v}_2=e^{kx}[\gen x+\frac{2k}{1-\gamma}y\gen y].$$
The case $f=0$, namely $C_1=0$, $\gamma=-3$, is in particular interesting.   The nonlinearity $g(y)$ becomes
\begin{equation}\label{notable-case}
  g(y)=C_2 y^{-3}-\frac{k^2}{4}y,
\end{equation}
and  $c_0=k/2$.
The corresponding invariant equation is exactly \eqref{EP} for $\alpha=k/2$, namely, Ermakov--Pinney equation with symmetry algebra spanned by the vector fields
\begin{equation}\label{3-dim-sym}
  \mathbf{v}_1=\gen x, \quad \mathbf{v}_2=e^{kx}(\gen x+\frac{k}{2}y\gen y),  \quad \mathbf{v}_3=e^{-kx}(\gen x-\frac{k}{2}y\gen y), \quad k\ne 0.
\end{equation}
This is the only case when the symmetry algebra of \eqref{lienard} is three-dimensional.

If  $f(y)=-1$, $g(y)=-F(y)$ we recover the group classification of \eqref{ode}. Then the first equation of \eqref{f-g-1-sys} gives the condition $k(1+k-2c_0)=0$. We discard the abelian case $k=0$ to eliminate the linearity of $g(y)$. For the choice $c_0=(1+k)/2$ ($k\ne -1$)  we find
$$g(y)=\frac{1}{4}[C_1 y^{(1-3k)/(1+k)}+(1-k^2)y].$$ Compare this result with \eqref{nonlinearity}.
When $k=\pm i \kappa$,  we can replace $g(y)$ by $g(y)=C_2y^{-3}+(\kappa^2/4)y$ and we then have
$$\mathbf{v}_{2,3}=e^{\pm i \kappa x}(\gen x\pm \frac{i \kappa}{2}y\gen y):=\mathbf{w}_2+i \mathbf{w}_3.$$
The basis \eqref{3-dim-sym} gets changed to
$$\mathbf{w}_1=\mathbf{v}_1, \quad \mathbf{w}_2=\cos \kappa x\;\gen x-\frac{\kappa}{2} \sin \kappa x \;y\gen y, \quad \mathbf{w}_3=\sin \kappa x\;\gen x+\frac{\kappa}{2}\cos \kappa x\; y\gen y.$$

For $k=0$, we refer to \eqref{sl2}.

II. $c_0=0$, $b_0\ne 0$: In this case $f$ and $g$ satisfy
\begin{equation}\label{f-g-2-sys}
  \begin{split}
     &  d_0 f'+kf-k^2=0,\\
      & d_0 g'+2kg+d_0kf+d_0k^2=0
  \end{split}
\end{equation}
with solution
\begin{equation}\label{f-g-2}
  f(y)=C_1 e^{-k/d_0y},  \quad g(y)=C_1 e^{-k/d_0y}+ C_2 e^{-(2k/d_0)y}-kd_0.
\end{equation}
The special case  $f=-1$ gives the condition $k(k+1)=0$. For the choice $k=-1$ we have
$$g(y)=C_1 e^{(2/d_0)y}+d_0.$$
We recover the nonlinearity \eqref{eq-k=-1}.

Analogously to the Li\'{e}nard equation, we can request invariance  of its quadratic version \eqref{q-lienard}. Applying second prolongation of \eqref{v2} restricted to \eqref{q-lienard} and splitting with respect to the derivatives $y',y'^{2}$, we  find the classification ODEs for $f(y)$ and $g(y)$
\begin{equation}\label{class-odes}
  \begin{split}
     & (c_0y+d_0)f'+c_0f=0,  \quad 2(c_0y+d_0)f+2c_0-k=0, \\
      & (c_0y+d_0)g'+(2k-c_0)g+k^2(c_0y+d_0)=0.
  \end{split}
\end{equation}
The solution of this system leads to an integrable subclass of \eqref{q-lienard} admitting a two-dimensional nonabelian  Lie point symmetry with commutation relation \eqref{comm} and $\mathbf{v}_2$ having the form \eqref{v2} ($b_0=1$)
\begin{equation}\label{inv-coeff-i}
  f(y)=\frac{k-2c_0}{2(c_0y+d_0)}, \quad g(y)=(c_0y+d_0)^{\gamma}-\frac{k^2}{1-\gamma}(c_0y+d_0),  \quad k\ne \frac{c_0}{2},
\end{equation}
for $c_0\not=0$ and
\begin{equation}\label{inv-coeff-ii}
  f(y)=\frac{k}{2d_0},  \quad g(y)=M e^{-(2k/d_0)y}
\end{equation}
for $c_0=0$, $d_0\ne 0$.

By defining $c_0=2k/(1-\gamma)$ and putting $d_0=0$ the invariant equation corresponding to \eqref{inv-coeff-i} can be written as
\begin{equation}\label{inv-ode-1}
  y''-\frac{\gamma+3}{4}\frac{y'^2}{y}+K y^{\gamma}-\frac{k^2}{1-\gamma}y=0.
\end{equation}
The ODE \eqref{inv-ode-1} admits a maximal Lie symmetry algebra of dimension three (in addition to $\mathbf{v}_1,\mathbf{v}_2$) spanned by the vector fields
$$\mathbf{v}_1=\gen x,  \quad \mathbf{v}_2=e^{kx}\left(\gen x+\frac{2k}{1-\gamma}y\gen y\right),  \quad \mathbf{v}_3=e^{-kx}\left(\gen x-\frac{2k}{1-\gamma}y\gen y\right).$$
isomorphic to the $\Sl(2,\mathbb{R})$ algebra. The commutation relations between $\mathbf{v}_1, \mathbf{v}_2, \mathbf{v}_3$ are
\begin{equation}\label{sl2-k}
  [\mathbf{v}_1,\mathbf{v}_2]=k\mathbf{v}_2,  \quad [\mathbf{v}_2,\mathbf{v}_3]=-2k\mathbf{v}_1, \quad [\mathbf{v}_1,\mathbf{v}_3]=-k\mathbf{v}_3.
\end{equation}
By the substitution $\mathbf{v}_1\to k\mathbf{v}_1$, $\mathbf{v}_2\to -k\mathbf{v}_2$, $\mathbf{v}_3\to k\mathbf{v}_3$, they satisfy the commutation relations of the  $\Sl(2,\mathbb{R})$ algebra (see \eqref{sl2-comm}).
The general solution of \eqref{inv-ode-1} can be expressed as
\begin{equation}\label{sol-SchwKumm}
  y=\left(A e^{kx}+2B+Ce^{-kx}\right)^{2/(1-\gamma)},  \quad 4(AC-B^2)k^2=(1-\gamma)K,
\end{equation}
where $A,B,C$ are constants, two of which are arbitrary.
In the special case $\gamma=-3$, \eqref{inv-ode-1} and its solution \eqref{sol-SchwKumm} coincide with \eqref{EP} and  \eqref{EP-sol} (or \eqref{notable-case}), respectively.

In \eqref{inv-coeff-ii}, by scaling $y$ we can set $d_0=k$ and find the invariant ODE
\begin{equation}\label{inv-ode-2}
y''+\frac{1}{2}y'^2+M e^{-2y}-\frac{k^2}{2}=0
\end{equation}
admitting a three-dimensional (maximal) Lie symmetry algebra
\begin{equation}\label{three-dim}
  \mathbf{v}_1=\gen x,  \quad \mathbf{v}_2=e^{kx}(\gen x+k\gen y),  \quad \mathbf{v}_3=e^{-kx}(\gen x-k\gen y)
\end{equation}
satisfying the commutation relations \eqref{sl2-k}.
We can integrate \eqref{inv-ode-2} by using the two-dimensional solvable subalgebra $\{\mathbf{v}_1, \mathbf{v}_2\}$.  In terms of the canonical variables $r=y-kx$, $s=e^{-kx}$ for $\mathbf{v}_2$, \eqref{inv-ode-2} has  the form
\begin{equation}\label{inv-ode-2-r-s}
  \frac{d^2r}{ds^2}+\frac{1}{2}\left(\frac{dr}{ds}\right)^2+\frac{M}{k^2}e^{-2r}=0
\end{equation}
admitting the symmetry  (up to a multiple of $-k$)
$$\tilde{\mathbf{v}}_1=s\gen s+\gen r,  \quad  \tilde{\mathbf{v}}_2=\gen s.$$
We reduce \eqref{inv-ode-2-r-s} using $\gen s$ to the first order ODE with Lie point symmetry $\mathbf{w}_1=\gen r-R\gen R$
$$R\frac{dR}{dr}+\frac{1}{2}R^2+\frac{M}{k^2}e^{-2r}=0,  \quad R=r'(s)=\frac{dr}{ds}.$$
This ODE can be integrated either as a Bernoulli equation
with solution
$$R^2(r)=r'^2=c_1 e^{-r}+\frac{2M}{k^2}e^{-2r}$$ or as an exact equation
using the integrating factor $2e^r$.
Finally, second integration gives
$$4(c_1e^{r}+2Mk^{-2})=(c_1s+c_2)^2$$ or in terms of the original coordinates
$$4(c_1e^{y-kx}+2Mk^{-2})=(c_1e^{-kx}+c_2)^2$$ from which $y$ can be expressed explicitly in terms of $x$.

\begin{table}
  \centering
  \begin{tabular}{|c|c|c|c|c|}
    \hline
   $N$ & $f(x,y)$ & $\mathbf{v}_1$ & $\mathbf{v}_2$ & $\mathbf{v}_3$ \\
    \hline\hline
   $1$ & $f(y)$ & $\gen x$ & 0 & 0 \\
    \hline
   $2$ & $e^y$ & $\gen x$ & $x \gen x-2 \gen y$ & 0 \\
    \hline
   $3$ & $y^k$, $k\ne -3$  & $\gen x$ & $(k-1)x\gen x-2y \gen y$ & 0 \\
    \hline
   $4$ & $\pm y^{-3}$ & $\gen x$ & $2x\gen x+y\gen y$ & $x^2\gen x+xy\gen y$ \\
    \hline
   $5$ & $x^{-2}F(y)$ & $x\gen x$ & 0 & 0 \\
    \hline
  \end{tabular}
  \caption{Group classification of $y''=f(x,y)$}\label{table-class}
\end{table}

\end{remark}

\begin{remark}
The special case $\gamma=2$, $k=-1/5$ and $F(y)=y(y-6/25)$ of \eqref{nonlinearity} deserves a special attention, because this relates, up to a scaling of $y$, to the travelling-wave solutions of Fisher's equation (or nonlinear reaction-diffusion equation)
\begin{equation}\label{Fisher}
  u_t=u_{xx}+a u(1-u).
\end{equation}
This equation originated in 1936 to model the propagation of a gene in a population.
Notice that under the change of variable $u\to 1-u$, the parameter $a$ changes sign.
The corresponding travelling-wave solutions  are then reduced to integrating the second order ODE $r''(s)=r^2$. The general solution of this equation is $r=\wp(s/\sqrt{6}+a;0,g_3)$,  where $a, g_3$ are arbitrary constants. Here $\wp(s;g_2,g_3)$ is the Weierstrass $\wp$ function with invariants $g_2$ and $g_3$.

Changing to the original variables using  $s=-5e^{x/5}$, $r=ye^{-2x/5}$ from \eqref{rect-coord}, for $g_3\ne 0$, the solutions  can be written as
\begin{equation}\label{p-sol}
  y(x)=e^{2x/5}\wp\left(\frac{5}{\sqrt{6}}e^{x/5}+a;0,g_3\right).
\end{equation}
We note that $\wp$ is an even function since $g_2=0$ and $g_3$ is arbitrary.
They are doubly periodic with an infinite number of poles on the real axis. The solutions of biological interest are obtained for $g_3=0$. Using the fact that $\wp(s;0,0)=s^{-2}$   we find
$$y(x)=\left[\frac{5}{\sqrt{6}}+ae^{-x/5}\right]^{-2}$$
satisfying the boundary conditions $\lim_{x\to \infty}y(x)=6/25$ and  $\lim_{x\to -\infty}y(x)=0$.
In \cite{AblowitzZeppetella1979}, it was shown that for the special wave speed $c=5/\sqrt{6}$, for which the Fisher's equation passes the necessary condition to be of Painlev\'{e} type (we already discussed this constraint on page \pageref{special-wave-speed} as a condition of integrability by quadratures),  can be reduced to the canonical form  $r''=6r^2$ by a transformation of the  independent and dependent variables of the form $y(x)=f(x)r(s)$, $s=h(x)$ with $f$ and $h$ appropriately chosen.

Another context in which $\gamma=2$ appears is the study of  travelling solutions of the two-dimensional Korteweg--de Vries--Burgers ($\mu\ne 0$) and Kadomtsev--Petviashvili  ($\mu =0$) equations for the special quadratic nonlinearity $F(u)$ \cite{EstevezKuruNegroNieto2006}.  In this paper, factorization technique was applied to obtain solution \eqref{p-sol}.

We recall that the case $\gamma=3$, $k=-1/3$ corresponds to the travelling wave solutions for the Newell--Whitehead--Segel equation. The rectifying transformation is $r=-3e^{x/3}$, $s=e^{-x/3}y$.
\end{remark}

\end{example}

\section{Group-invariant solutions}

One of the main applications of the notion of symmetry group to PDEs is to construct group-invariant solutions. Suppose that $G$ is a symmetry group of the system \eqref{sys}. A solution $u=f(x)$ is called group-invariant if $g.f=f$ for all $g\in G$.
This means that a group-invariant solution does not change under the symmetry group transformations. For example, if $G$ is the group of rotations in the space of independent variables $x$, then a  solution invariant under $G$ will be a function of the radius alone in the form $u=F(|x|)$. Travelling wave solutions are solutions invariant under the group of translations. Self-similar (or similarity) solutions that frequently arise in applications correspond to scaling symmetries.

\begin{theorem}
  Suppose that the symmetry group $G$ acts  on the space of independent and dependent variables $E=X\times U$ and sweeps out generic orbits of dimension $d$ and of codimension $p+q-d$ (the number of functionally independent invariants of the group $G$). Then all the  group-invariant solutions to $\mathsf{E}=0$ can be found by solving a reduced system of differential equations $\mathsf{E}/G = 0$ in $d$ fewer independent variables.
\end{theorem}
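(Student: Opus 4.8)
The statement is in essence a reduction procedure, so the plan is to construct the reduced system $\mathsf{E}/G = 0$ explicitly and show that its solutions correspond exactly to the $G$-invariant solutions of $\mathsf{E}=0$, with the number of independent variables dropping from $p$ to $p-d$.

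First I would pass to coordinates adapted to the action of $G$ on $E = X\times U$. By hypothesis $G$ acts regularly with $d$-dimensional orbits near a generic point, so by the constant-rank theorem there are exactly $m = p+q-d$ functionally independent invariants; these are obtained, via the infinitesimal criterion for ordinary invariants $\mathbf{v}(I)=0$, as independent solutions of the first-order linear system $\mathbf{v}_\nu(I)=0$, $\nu = 1,\dots,r$, for a basis $\curl{\mathbf{v}_1,\dots,\mathbf{v}_r}$ of $\lie$. I would add the standard transversality hypothesis that the orbits project to $d$-dimensional subsets of $X$ (equivalently, the coefficient matrix $(\xi_{i,\nu})$ of the $\gen{x_i}$-components has rank $d$ at the generic point); without a condition of this kind the very notion of a $G$-invariant solution $u=f(x)$ may be vacuous. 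Under this hypothesis the $m$ invariants split into new independent variables $y = (y_1,\dots,y_{p-d})$ and new dependent variables $v = (v_1,\dots,v_q)$, each of the form $y_j = y_j(x,u)$, $v_\alpha = v_\alpha(x,u)$, that together furnish a complete independent invariant set and a coordinate chart transverse to the orbits.

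Next I would characterize the $G$-invariant solutions in these coordinates. If $u=f(x)$ is $G$-invariant then $g\cdot f = f$ for all $g\in G$, so its graph $\Gamma_f$ is a union of pieces of orbits; since $(y,v)$ are orbit invariants, on $\Gamma_f$ the $v$'s are functions of the $y$'s alone, i.e. the solution is encoded by $q$ functions $v = h(y)$ of the $p-d$ variables $y$. Conversely, given any such $h$, the relations $v_\alpha(x,u) = h_\alpha\bigl(y(x,u)\bigr)$ define $u = f(x)$ implicitly — the transversality/rank condition being exactly what the implicit function theorem requires — and the resulting $f$ is $G$-invariant. Thus, near a generic point, $G$-invariant solutions correspond bijectively to functions $h$ of $p-d$ independent variables. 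The computational heart is then the substitution step: using the chain rule I would express $x$, $u$ and all derivatives $u_{\alpha,J}$ with $\#J\le n$, restricted to the ansatz $v=h(y)$, in terms of $y$, $v=h(y)$ and the $y$-derivatives of $h$ up to order $n$ — and it is here that the count "$d$ fewer independent variables" appears, since $h$ has only $p-d$ arguments. Feeding this into $\mathsf{E}_\nu(x,u^{(n)})=0$ yields the system $\mathsf{E}/G = 0$ for $h$. What makes this well-posed is that the equation manifold $\mathcal{S}_\mathsf{E}\subset J^n$ is $G$-invariant, hence invariant under the prolonged action of $G$; the prolonged orbits foliate $J^n$, the invariant jets are precisely the prolongations of invariant sections, and so the constraints $\mathsf{E}_\nu=0$ descend to consistent relations among $y$, $v$ and $\partial_y v,\dots,\partial_y^n v$.

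The main obstacle I anticipate is twofold, and both parts are genericity issues rather than deep difficulties: (i) securing the transversality and regularity hypotheses so that the splitting of the invariants into $y$'s and $v$'s, and the implicit reconstruction of $u=f(x)$, are valid on a dense open set; and (ii) verifying carefully that $\mathsf{E}/G = 0$ is genuinely a system of differential equations in $y$, $v$, $\partial_y v,\dots$ with no residual dependence on the suppressed orbit coordinates — this is where one must invoke the full infinitesimal criterion $\pr{n}\mathbf{v}(\mathsf{E}_\nu)\big\vert_{\mathsf{E}=0}=0$, not merely $0$-th order invariance, to ensure that differentiating the invariant ansatz reintroduces no non-invariant combinations. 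Once these are in place, the bijection between invariant solutions $f$ and reduced solutions $h$, together with the drop in the number of independent variables from $p$ to $p-d$, yields the statement.
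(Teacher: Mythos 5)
Your proposal is correct and follows essentially the same route the paper takes: the paper does not give a formal proof but immediately outlines the method via the invariant surface conditions $Q_{\alpha,l}(x,u^{(1)})=0$, solving them in terms of the $p+q-d$ invariants (split into $p-d$ new independent and $q$ new dependent variables), substituting into the system, and observing that the non-invariant coordinates drop out — which is exactly your argument, only you make explicit the transversality hypothesis and the role of the infinitesimal invariance criterion in guaranteeing the drop-out. No gaps; your added care about genericity and the rank condition on $(\xi_{i,\nu})$ is the standard way to make the paper's sketch rigorous.
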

For example, if we have a system of partial differential equations in two independent
variables, then the solutions invariant under a one-parameter symmetry group can
all be found by integrating a system of ordinary differential equations.

Reduction in the number of independent variables will be possible if the orbit dimension $d$ satisfies the inequality $p\leq d$. When $d=p$, the reduced system $\mathsf{E}/G = 0$ is a system of algebraic equations, while if $d>p$ there are no group-invariant solutions. In particular, if $d=p-1$ we have a system of ODEs.

Let G be a local Lie group of transformations with infinitesimal
generators $\mathbf{v}_1, \ldots, \mathbf{v}_r$, and the associated characteristics $Q_1,\ldots,Q_r$.  Then a function is invariant under $G$ if and only if it is a solution to the system of quasilinear first order partial differential equations characterizing the functions  invariant under $G$
\begin{equation}\label{fixed-Q}
  Q_{\alpha,l}(x,u^{(1)})=0,  \quad \alpha=1,2,\ldots,q, \quad l=1,2,\ldots,r.
\end{equation}
The group-invariant solution thus will satisfy both the original system together with the invariance constraints \eqref{fixed-Q}, which form an overdetermined system of PDEs. The method of symmetry reduction consists of solving \eqref{fixed-Q} in terms of invariant coordinates and substituting these into the original system. In the final step all non-invariant coordinates will drop out of the resulting reduced system. What remains to derive group-invariant solutions is to solve the reduced system depending on fewer independent variables.

Given a  solution invariant under a subgroup $H$ of the full symmetry group $G$ of a system, it can be transformed to other group-invariant solutions by elements $g\in G$ not in the subgroup $H$. Two group-invariant solutions are called inequivalent if one can not be transformed to the other by some group transformation $g\in G$. The corresponding reduced systems also have to be inequivalent.   Two subgroups which are conjugate under the symmetry group $G$ will produce equivalent reduced systems (systems connected with a transformation in the symmetry group).

Let $H \subset G$ be a $s$-parameter subgroup. If $u=f(x)$ is a solution invariant under $H$ and $g\in G$ is any other group element, then the transformed function $u=\tilde{f}(x)=g.f(x)$ is a solution invariant under the conjugate subgroup $K_g(H)=g.H.g^{-1}$.

For example, the stationary solutions $u=f(x)$ of an evolution equation such as heat equation and KdV (Korteweg--de Vries) equation invariant under Galilean group $\mathbf{v}_b$ can be conjugated by the Galilean boosts $e^{-c\mathbf{v}_b}\gen t e^{c\mathbf{v}_b}$ to map to travelling wave solutions $u=f(x-ct)$ and vice versa.

So the collection of all group-invariant solutions are partitioned into equivalence classes. If $\lie$ is the Lie algebra of $G$, we are interested in obtaining a  representative  list of subalgebras of $\lie$ (also called an optimal system of subalgebras) such that every subalgebra of $\lie$ is conjugate to precisely one algebra in the list and no two subalgebras in the list are conjugate. Two algebras $\lie$ and $\mathfrak{g}'$ are conjugate under $G$ if $\lie=G\mathfrak{g}'G^{-1}$. Consequently, the problem of classifying group-invariant solutions is completely tantamount to finding the optimal system of subalgebras. When this task  has been finished, it is sufficient to construct only group-invariant solutions corresponding to these representative subalgebras in the list. All other  solutions  can be obtained by applying the symmetry transformations to the representative classes of solutions.

For a finite dimensional Lie algebra of dimension $\geq 2$, this is in general a complicated problem. If $\lie$ is a direct sum of two or more algebras, there is an algorithmic classification method \cite{Winternitz1992, PateraSharpWinternitzZassenhaus1977, PateraWinternitzZassenhaus1975a},  which is an adaptation to Lie algebras of the Goursat's method  for direct products of discrete groups. When $\lie$ is semidirect sum of two algebras, for example when $\lie$ is a Levi decomposition of a semi-simple algebra and its radical (maximal solvable ideal) a method has been proposed in \cite{PateraWinternitzZassenhaus1975} (see also \cite{Winternitz1989}). In physical applications, one usually encounters with low-dimensional algebras as symmetry algebras. In such cases, subalgebras of real three- and four-dimensional Lie algebras \cite{PateraWinternitz1977} can be of great use in the classification of group-invariant solutions.

In the one-dimensional case, subalgebra classification problem is solved by using the adjoint transformations using the Baker--Campbell--Housedorff formula (or the Lie series)
\begin{equation}\label{BCH}
 \Ad(\exp(\varepsilon \mathbf{v}))\mathbf{w}=\exp{(\varepsilon \mathbf{v})}\mathbf{w}\exp{(-\varepsilon \mathbf{v})}=\mathbf{w}+\varepsilon [\mathbf{v},\mathbf{w}]+\frac{\varepsilon^2}{2!}[\mathbf{v},[\mathbf{v},\mathbf{w}]]+\ldots,
\end{equation}
where $\mathbf{v}, \mathbf{w}\in \lie$ and $\varepsilon$ is the group parameter.
This series terminates or can be easily summed up when the vector field $\mathbf{v}$ is a nilpotent element or generates a translation, rotation or scaling group. The basic idea is to take a general vector field $\mathbf{v}=\Span\curl{\mathbf{v}_1,\ldots,\mathbf{v}_r}$  in $\lie$ and to simplify it as much as possible using adjoint transformations.

\begin{example}
We give an example of one-dimensional subalgebras for the simple algebra $\ort(3,\mathbb{R})$  with commutation relations
$$[\mathbf{v}_1,\mathbf{v}_2]=\mathbf{v}_3,  \quad [\mathbf{v}_3,\mathbf{v}_1]=\mathbf{v}_2, \quad [\mathbf{v}_2,\mathbf{v}_3]=\mathbf{v}_1.$$
$\ort(3,\mathbb{R})$  has no no-trivial ideals and it has only one-dimensional (non-trivial) subalgebras.

Let $\mathbf{v}=a_1\mathbf{v}_1+a_2 \mathbf{v}_2+a_3\mathbf{v}_3\in \ort(3,\mathbb{R})$.
For $a_3\ne 0$, using the formula \eqref{BCH} and the above commutation relations and summing infinite series in $\varepsilon$ we find
$$\Ad(\exp(\varepsilon \mathbf{v}_2))\mathbf{v}_1=\cos \varepsilon \mathbf{v}_1-\sin \varepsilon \mathbf{v}_3,  \quad \Ad(\exp(\varepsilon \mathbf{v}_2))\mathbf{v}_3=\sin \varepsilon \mathbf{v}_1+\cos \varepsilon \mathbf{v}_3,$$ and from the linearity of the adjoint transformation
$$\mathbf{v}'=\Ad(\exp(\varepsilon \mathbf{v}_2))\mathbf{v}=a_1'\mathbf{v}_1+a_2'\mathbf{v}_2+a_3'\mathbf{v}_3,$$
where
$$a_1'=a_1\cos \varepsilon+a_3 \sin \varepsilon,  \quad a_2'=a_2,  \quad a_3'=-a_1\sin \varepsilon+a_3 \cos \varepsilon.$$
Note that $a_1'^2+a_2'^2+a_3'^2=a_1^2+a_2^2+a_3^2$ for any $\varepsilon\in \mathbb{R}$. This means that the positive-definite function $I(\mathbf{v})=a_1^2+a_2^2+a_3^2>0$ (unless $a_1=a_2=a_3=0$) is an invariant of the corresponding adjoint transformation.   It can be seen that $I$ is an invariant of the full adjoint action: $I(\Ad_g(\mathbf{v}))=I(\mathbf{v})$, $g\in \Ort(3,\mathbb{R})$, the Lie group of $\ort(3,\mathbb{R})$.
If  $a_3\ne 0$, we can choose $\varepsilon$ such that the coefficient $a_3'$ is zero. So we may assume that $a_3=0$ and $\mathbf{v}=a_1\mathbf{v}_1+a_2 \mathbf{v}_2$. If $a_2\ne 0$, we further conjugate this form of $\mathbf{v}$ by $\mathbf{v}_3$ (or equivalently apply an inner automorphism of the algebra) and find
$$\mathbf{v}'=\Ad(\exp(\alpha \mathbf{v}_3))\mathbf{v}=(a_1\cos \alpha-a_2 \sin \alpha)\mathbf{v}_1+(a_1 \sin \alpha+a_2\cos \alpha)\mathbf{v}_2.$$
Now we can arrange for $\alpha$ to be such that the coefficient of $\mathbf{v}_2$ is zero and hence $\mathbf{v}$ is conjugate to $a_1 \mathbf{v}_1$ for $a_1\ne 0$ or to $\mathbf{v}_1$ by scaling. The optimal system contains only one vector field $\mathbf{v}_1$.
Any solution invariant under the one-parameter group generated by $\mathbf{v}\in \ort(3,\mathbb{R})$ is equivalent to a solution invariant under the group generated by $\mathbf{v}_1$.
\end{example}

The example of one-dimensional subalgebras of the $\Sl(2,\mathbb{R})$ algebra can be found in \cite{Olver1993, Ovsyannikov1982, Hydon2000}. For the subalgebra classification of the symmetry algebra of the heat equation, which will be needed in the following example, see for example \cite{Weisner1959, Winternitz1989, Olver1993}.

\begin{example}\label{diff-pde}
  We consider the diffusion equation with constant drift $b$
\begin{equation}\label{diff}
  u_t=xu_{xx}+b u_x,  \quad b>0.
\end{equation}
As this equation contains a parameter  the structure of its symmetry group will crucially  depend on the parameter. The problem of determination of all possible symmetries can be regarded as a group classification problem.
Let the symmetry group of \eqref{diff} be generated by the vector fields
\begin{equation}\label{vect-f}
  \mathbf{v}=\tau(t,x,u)\gen t+\xi(t,x,u)\gen x+\varphi(t,x,u)\gen u,
\end{equation}
where the coefficients $\tau$, $\xi$ and $\varphi$ will be determined from the invariance criterion \eqref{inv-criter}. We need to know the second prolongation of $\mathbf{v}$
$$\pr{2}\mathbf{v}=\mathbf{v}+\varphi^t\gen{u_t}+\varphi^x\gen{u_x}+\varphi^{xx}\gen{u_{xx}}+\ldots.$$
Applying the criterion to ${\mathsf{E}}(t,x,u,u_t,u_x,u_{xx})=u_t-x u_{xx}-b u_x$ gives on solutions $$\varphi^t-x\varphi^{xx}-\xi u_{xx}-b \varphi^x=0,$$
where, from \eqref{coeff-Q-pro}, in terms of the characteristic $Q=\varphi-\xi u_x-\tau u_t$,
$$\varphi^t=D_tQ+\xi u_{xt}+\tau u_{tt}, \quad \varphi^x=D_xQ+\xi u_{xx}+\tau u_{xt},$$ and
$$\varphi^{xx}=D_x^2Q+\xi u_{xxx}+\tau u_{xxt}.$$
Eliminating $u_{xx}$ using $\mathsf{E}=0$ and splitting with respect to the derivatives $u_t$, $u_x$, $u_{xt}$ we find
that $\tau_x=\tau_u=0$, $\xi_u=0$ and $\varphi_{uu}=0$, which mean $\tau=\tau(t)$, $\xi=\xi(t,x)$, $\varphi=\phi(t,x)u+\psi(t,x)$. Using this information and setting equal to zero the coefficients of $u$, $u_t$, $u_x$, the determining system is simplified to
\begin{subequations}\label{deq}
  \begin{equation}\label{deq-1}
         2x\xi_x-\dot{\tau}x-\xi=0, \quad x^2(\xi_{xx}-2\phi_x)-x\xi_t-bx\xi_x+b \xi=0,
  \end{equation}
  \begin{equation}\label{deq-2}
     \phi_t=x \phi_{xx}+b \phi_x,  \quad \psi_t=x \psi_{xx}+b \psi_x.
  \end{equation}
\end{subequations}

Solving  for $\xi$ from the first equation of \eqref{deq-1} we find
$$\xi(t,x)=f(t)\sqrt{x}+\dot{\tau}x.$$
Then using this information in the second equation
gives
$$4x^{3/2}(2\phi_x+\ddot{\tau})+4\dot{f}x+(1-2b)f=0,$$ from which on integrating we have
$$\phi(t,x)=-\frac{1}{4}\left[2\ddot{\tau}x+4\dot{f}\sqrt{x}+\frac{(2b-1)f}{\sqrt{x}}\right]+g(t).$$ Substituting $\phi$ in the first equation of \eqref{deq-2}  we find
\begin{equation}\label{rest-deq}
  (2b-1)(2b-3)f=0,  \quad \ddot{f}=0, \quad \dddot{\tau}=0,  \quad b \ddot{\tau}+2\dot{g}=0.
\end{equation}
The last two equations specify $\tau$ and $g$ as
$$\tau(t)=\tau_2 t^2+\tau_1 t+\tau_0, \quad g(t)=-\frac{b}{2} \dot{\tau}+g_0,$$ where $\tau_2,\tau_1,\tau_0,g_0$ are the  integration constants. Finally, the last equation in \eqref{deq-2} decouples from the others, therefore we find that for any solution $\psi(t,x)$ of \eqref{diff}, the vector field $\mathbf{v}_{\psi}=\psi\gen u$ gives rise to a symmetry, which simply reflects the superposition rule of the linear equations.  Hereafter, we shall factor out this infinite-dimensional algebra. The constant $g_0$ corresponds to the scaling symmetry $\mathbf{v}_0=u\gen u$. This means we can multiply solutions by constants.
There are two cases to consider:

\underline{$b\not\in\curl{1/2, 3/2}$} ($f=0$): $\mathbf{v}$ depends on four  integration constants.  The symmetry algebra $\lie_4$ is four-dimensional. A suitable basis is given by
\begin{equation}\label{dim-4}
\begin{split}
  &  \mathbf{v}_1=\gen t, \quad \mathbf{v}_2=t\gen t+x\gen x,  \\
  & \mathbf{v}_3=t^2\gen t+2tx\gen x-(x+bt)u\gen u,  \quad   \mathbf{v}_0=u\gen u.
  \end{split}
\end{equation}

\underline{$b\in\curl{1/2, 3/2}$} ($f\neq 0$): We have $f(t)=f_1 t+f_0$. The symmetry algebra $\lie_6$  is six-dimensional. In this case,  the basis \eqref{dim-4} for $b=1/2$ is extended by two additional elements
\begin{equation}\label{dim-6-1}
\mathbf{v}_4=\sqrt{x}\gen x,  \quad   \mathbf{v}_5=t\mathbf{v}_4-\sqrt{x}u\gen u,
\end{equation}
and for $b=3/2$  by the following two
\begin{equation}\label{dim-6-2}
  \mathbf{v}_4=\sqrt{x}\gen x-\frac{1}{2\sqrt{x}}u\gen u,  \quad \mathbf{v}_5=t\mathbf{v}_4-\sqrt{x}u\gen u.
\end{equation}
\begin{table}
  \centering
  \begin{tabular}{|c|c|c|c|c|c|}
  \hline
  $[\mathbf{v}_i,\mathbf{v}_j]$  & $\mathbf{v}_1$ & $\mathbf{v}_2$ & $\mathbf{v}_3$ & $\mathbf{v}_4$ & $\mathbf{v}_5$ \\\hline
  $\mathbf{v}_1$ & 0 & $\mathbf{v}_1$ & $2\mathbf{v}_2$ & 0 & $\mathbf{v}_4$ \\
  $\mathbf{v}_2$ & -$\mathbf{v}_1$ & 0 & $\mathbf{v}_3$ & -$\mathbf{v}_4/2$ & $\mathbf{v}_5/2$ \\
  $\mathbf{v}_3$ & -2$\mathbf{v}_2$ & -$\mathbf{v}_3$ & 0 & -$\mathbf{v}_5$ & 0 \\
  $\mathbf{v}_4$ & 0 & $\mathbf{v}_4/2$ & $\mathbf{v}_5$ & 0 & -$\mathbf{v}_0/2$ \\
  $\mathbf{v}_5$ & -$\mathbf{v}_4$ & -$\mathbf{v}_5/2$ & 0 & $\mathbf{v}_0/2$ & 0 \\
  \hline
\end{tabular}
  \caption{Commutator Table}\label{comm-tab}
\end{table}
$\mathbf{v}_0$ is the center element of the symmetry algebra.

From the commutator table \eqref{comm-tab} we see that the  algebra $\lie_4$ has the structure of a direct sum $\lie_4= \Sl(2,\mathbb{R})\oplus \curl{\mathbf{v}_0}$, where $\Sl(2,\mathbb{R})\simeq \curl{\mathbf{v}_1,\mathbf{v}_2,\mathbf{v}_3}$, whereas the algebra $\lie_6$ has the semidirect sum structure $\lie_6= \Sl(2,\mathbb{R})\uplus \Heis(1)$, where $\Heis(1)\simeq \curl{\mathbf{v}_0, \mathbf{v}_4, \mathbf{v}_5}$ is the three-dimensional Heisenberg algebra with center $\mathbf{v}_0$. The symmetry algebra $\lie_6$ is isomorphic to the well-known symmetry algebra of the constant coefficient heat equation $u_t=u_{xx}$. The existence of such an isomorphism is a necessary (but not sufficient) condition  for  a point transformation $\Phi: (t,x,u)\to (\tilde{t},\tilde{x},\tilde{u})$ to exist, mapping equations into each other. Indeed, in the case of $b=1/2$, there is a point transformation (not necessarily unique)
\begin{equation}\label{map-1}
  \Phi:\quad \tilde{t}=-\frac{1}{t},  \quad \tilde{x}=\frac{2\sqrt{x}}{t},  \quad \tilde{u}=\sqrt{t}\exp\left(\frac{x}{t}\right)u,
\end{equation}
and
\begin{equation}\label{map-2}
  \Phi:\quad \tilde{t}=-\frac{1}{t},  \quad \tilde{x}=\frac{2\sqrt{x}}{t},  \quad \tilde{u}=\sqrt{tx}\exp\left(\frac{x}{t}\right)u,
\end{equation}
if $b=3/2$, mapping \eqref{diff} to the first canonical form $\tilde{u}_{\tilde{t}}=\tilde{u}_{\tilde{x}\tilde{x}}$.
If $b=1/2$, a simple change of space variable $\tilde{x}=2\sqrt{x}$ alone also does the job.

Let $b=3/2$. The pushforwards $\tilde{\mathbf{v}}_i=\Phi_{*}(\mathbf{v}_i)$ of the vector fields $\mathbf{v}_i$, $i=0,\ldots,5$ of the algebra $\lie_6$ via the map $\Phi$  are easily calculated:
\begin{equation}\label{push}
\begin{split}
    &  \tilde{\mathbf{v}}_1=t^2\gen t+tx\gen x-\frac{1}{4}(x^2+2t)u\gen u,  \quad \tilde{\mathbf{v}}_2=-\left(t\gen t+\frac{x}{2}\gen x\right)+\frac{1}{2}\tilde{\mathbf{v}}_0,    \\
     & \tilde{\mathbf{v}}_3=\gen t, \quad \tilde{\mathbf{v}}_4=-\left(t\gen x-\frac{x}{2}u\gen u\right),  \quad \tilde{\mathbf{v}}_5=\gen x, \quad \tilde{\mathbf{v}}_0=\mathbf{v}_0=u\gen u,
\end{split}
 \end{equation}
where all coordinates should be replaced by tildes (written in the new coordinates). The Lie algebra $\tilde{\mathfrak{g}}$ spanned by $\curl{\tilde{\mathbf{v}}_i}$ is recognized to be the symmetry algebra of $u_t=u_{xx}$, having the Levi  decomposition  $\tilde{\mathfrak{g}}=\curl{\tilde{\mathbf{v}}_3,\tilde{\mathbf{v}}_2,\tilde{\mathbf{v}}_1}\uplus
\curl{\tilde{\mathbf{v}}_5,\tilde{\mathbf{v}}_4,\tilde{\mathbf{v}}_0}\simeq\Sl(2,\mathbb{R})\uplus\Heis(1)$.
This can be verified by using the same steps of symmetry calculation done for Eq. \eqref{diff}.

For $b=1/2$, the pushforwards $\tilde{\mathbf{v}}_i=\Phi_{*}(\mathbf{v}_i)$ of the vector fields $\mathbf{v}_i$, $i=0,1,2,\ldots,5$ of the algebra $\lie_6$ spanned by \eqref{dim-4}-\eqref{dim-6-1} via the map $\Phi:(t,x,u)\to (t,2\sqrt{x},u)$ are exactly the basis vectors of the heat equation algebra:
\begin{equation}\label{heat-symm-basis}
  \begin{split}
      &  \tilde{\mathbf{v}}_1=\gen t, \quad \tilde{\mathbf{v}}_2=t\gen t+\frac{x}{2}\gen x, \quad \tilde{\mathbf{v}}_3=t^2\gen t+xt\gen x-\frac{1}{4}(x^2+2t)\mathbf{v}_0,\\
       & \tilde{\mathbf{v}}_4=\gen x, \quad \tilde{\mathbf{v}}_5=t\gen x-\frac{x}{2}\mathbf{v}_0,  \quad \mathbf{v}_0=u\gen u.
   \end{split}
\end{equation}
The corresponding one-parameter symmetry groups are time translations, scaling in $t,x$, projective (or inversional) transformation, Galilei boosts, space translations and scaling in $u$, respectively.

In the case  $b\notin\curl{1/2,3/2}$, the equation transforms to the second canonical form
\begin{equation}\label{2nd-cano}
  \tilde{u}_{\tilde{t}}=\tilde{u}_{\tilde{x}\tilde{x}}+\frac{\mu}{\tilde{x}^2}\tilde{u}, \qquad \mu=\frac{(2b-1)(2b-3)}{4}
\end{equation}
via the transformation
$$\tilde{t}=t,  \quad \tilde{x}=2\sqrt{x}, \quad u=x^{(1-2b)/4}\tilde{u},$$ admitting the symmetry algebra spanned by
\begin{equation}\label{dim-4-2}
\begin{split}
  &  \mathbf{v}_1=\gen {\tilde{t}}, \quad \mathbf{v}_2=\tilde{t}\gen {\tilde{t}}+\frac{\tilde{x}}{2}\gen {\tilde{x}},  \\
  & \mathbf{v}_3=\tilde{t}^2\gen {\tilde{t}}+\tilde{t}\tilde{x}\gen {\tilde{x}}-\frac{1}{4}(\tilde{x}^2+2\tilde{t})\tilde{u}\gen {\tilde{u}},  \quad   \mathbf{v}_0=\tilde{u}\gen {\tilde{u}}.
  \end{split}
\end{equation}

We thus have given the identification of the symmetry algebras of two locally equivalent differential equations, \eqref{diff} and   $u_t=u_{xx}$. In general, two equivalent differential equations have isomorphic symmetry groups. If a differential equation with a symmetry $g$ is mapped to another one by a point transformation $\Phi$, then $\tilde{g}=\Phi\cdot g\cdot\Phi^{-1}$ is a symmetry of the second equation.

The well-known solutions of $u_t=u_{xx}$ can be mapped to solutions of \eqref{diff} when $b=1/2,3/2$. We note that the given equation is also invariant under the discrete group of simultaneous time and space reflections: $(t,x,u)\to (-t,-x,u)$.

Symmetry group of the equation will now be used to derive explicit group-invariant solutions. We illustrate how to construct group-invariant fundamental  solution of \eqref{diff} for any value of $b$ using the most general element $\mathbf{v}=a_0\mathbf{v}_0+a_1\mathbf{v}_1+a_2\mathbf{v}_2+a_3\mathbf{v}_3$ of the Lie algebra $\lie_4$. A fundamental solution $K(t,x;y)$ as distributions satisfies \eqref{diff} with the initial condition $\lim_{t\searrow 0}K(t,x;y)=\delta(x-y)$, where $\delta(x-y)$ is the Dirac distribution with singularity at $y$. The initial condition puts the following restrictions on the coefficients of \eqref{vect-f} (see \cite{CraddockDooley2001, CraddockPlaten2004, Guengoer2018, Guengoer2018a} for details)
\begin{equation}\label{init-cond}
  \tau(0)=0,  \quad \xi(0,y)=0,  \quad \phi(0,y)+\dot{\tau}(0)+\xi_x(0,y)=0.
\end{equation}
These conditions are satisfied by the vector field (a projective type symmetry)
\begin{equation}\label{vf-fund}
  \mathbf{v}=t^2\gen t+2tx\gen x-(x-y+bt)u\gen u.
\end{equation}
The fundamental solution will be looked for as a solution invariant  the symmetry group generated by  \eqref{vf-fund}. Its invariants are found by solving the invariance condition $Q(x,t,u,u_t,u_x)=\varphi-\tau u_t-\xi u_x=0$ as $\eta=x/t^2$, $\zeta=ut^{b}\exp[(x+y)/t]$. The group-invariant solution has the form
$$u=t^{-b}\exp\curl{-\frac{x+y}{t}}F(\eta).$$
Substituting the above solution $u$ into \eqref{diff} we find that $F$ satisfies
$$\eta F''+b F'-yF=0.$$ We note that this ODE is free of the non-invariant coordinates.  The solution $F$, which is bounded near zero is given by
$$F=\left(\frac{\eta}{y}\right)^{(1-b)/2}I_{b-1}(2\sqrt{y \eta}),$$ where $I_\nu$ denotes the usual modified Bessel function of the first kind and order $\nu$. We recall the following asymptotic relations for small values of $x$
$$I_\nu\sim\frac{1}{2^{\nu}\nu!}x^{\nu},  \quad I_{-\nu}\sim\frac{2^{\nu}}{(-\nu)!}x^{-\nu}, \quad \nu\not\in \mathbb{Z}.$$
We thus have constructed the following fundamental solution, up to a nonzero multiplicative constant $C$,
$$K(t,x;y)=\frac{C}{t}\left(\frac{x}{y}\right)^{\frac{1-b}{2}}\exp\curl{-\frac{x+y}{t}}
I_{b-1}\left(\frac{2\sqrt{xy}}{t}\right).$$ The value of $C$ is determined from the normalization condition.

The special case $b=1/2$ with $C=\pi^{-1/2}$ produces the elementary  solution
\begin{equation}\label{fund-sol-1}
  K(t,x;y)=\frac{1}{\sqrt{\pi ty}}\exp\curl{-\frac{x+y}{t}}\cosh\left(\frac{2\sqrt{xy}}{t}\right).
\end{equation}
Translation along $t$ gives the full heat kernel $K(t,x;t_0,x_0)=K(t-t_0,x;x_0)$.

Similarly, in the case $b=3/2$ with $C=\pi^{-1/2}$, $K$ is again elementary
\begin{equation}\label{fund-sol-2}
  K(t,x;y)=\frac{1}{\sqrt{\pi t x}}\exp\curl{-\frac{x+y}{t}}\sinh\left(\frac{2\sqrt{xy}}{t}\right).
\end{equation}
Both fundamental solutions satisfy the normalization condition $$\int_0^{\infty}K(0,x;y)dx=1.$$

We remark that solutions \eqref{fund-sol-1}-\eqref{fund-sol-2} can be recovered using the transformations \eqref{map-1}-\eqref{map-2} and the separable solutions (solutions invariant under $\mathbf{\tilde{v}}_3+y \mathbf{v}_0$, $y>0$ a parameter)
$$\tilde{u}(\tilde{t},\tilde{x})=Ce^{y \tilde{t}}
\begin{cases}
  \cosh(\sqrt{y}\, \tilde{x}), &  \\
  \sinh(\sqrt{y}\, \tilde{x}), &
\end{cases}
$$
of the heat equation $\tilde{u}_{\tilde{t}}=\tilde{u}_{\tilde{x}\tilde{x}}$.

In the case when $b\ne 1/2, 3/2$, group-invariant solutions can be derived using the one dimensional subalgebras (an optimal system of subalgebras) of the algebra $\lie_4$ (see \eqref{1-dim-subalgebras}).

Now let us look at the action of a subgroup, say $\mathbf{v}_3$ of \eqref{dim-4}, on the solution $u(t,x)$, $\exp\curl{\varepsilon \mathbf{v}_3}u(t,x)$. We find by exponentiating $ \mathbf{v}_3$ the group action generated by $\mathbf{v}_3$
\begin{equation}\label{action}
  \tilde{u}_{\varepsilon}(t,x)=(1+\varepsilon t)^{-b}\exp\curl{\frac{-\varepsilon x}{1+\varepsilon t}}u\left(\frac{x}{(1+\varepsilon t)^{2}},\frac{t}{1+\varepsilon t}\right).
\end{equation}
Formula \eqref{action} states that if $u(t,x)$ is a solution of \eqref{diff}, then  $\tilde{u}_{\varepsilon}(t,x)$ is also a solution. Simple solutions like stationary (time-independent) solutions can be mapped to new  solutions. A stationary solution $u(x)$ satisfies the linear ODE $xu_{xx}+bu_x=0$ with two-parameter solution $u(x)=c_0+c_1 x^{1-b}$ if $b\ne 1$,  and $u(x)=c_0+c_1 \ln x$ if  $b=1$. They get mapped to the following new solutions, respectively:
\begin{equation}\label{new-sol}
\begin{split}
  & u_{\varepsilon}(t,x)=(1+\varepsilon t)^{-b}\exp\curl{\frac{-\varepsilon x}{1+\varepsilon t}}\left[c_0+c_1(1+\varepsilon t)^{2(b-1)}x^{1-b}\right],  \quad b\ne 1,\\
    & u_{\varepsilon}(t,x)=(1+\varepsilon t)^{-1}\exp\curl{\frac{-\varepsilon x}{1+\varepsilon t}}\left[c_0+c_1 \log\frac{\sqrt{x}}{1+\varepsilon t}\right], \quad b=1.
\end{split}
\end{equation}
The remaining symmetry group of translation in $t$ and scaling in $t$ and $x$: $(t,x)\to (\lambda (t+t_0),\lambda x)$  can be applied to these solutions to obtain new ones depending on two more group parameters $t_0,\lambda$ (solutions invariant under the group $\SL(2,\mathbb{R})$).

\end{example}

\begin{example}(\cite{KovalenkoKopasStogniy2013})
  A tricky classification problem for the generalized  Fokker--Planck--Kolmogorov equation with a varying coefficient
\begin{equation}\label{gen-kolmogorov}
  u_t-u_{xx}+p(x)u_y=0.
\end{equation}
We assume that $p$ is not constant, otherwise by a simple change of variable $u=U(t,x,z), z=t-p^{-1}y$, it would be equivalent to the heat equation $U_t=U_{xx}$.
The special case $p(x)=x$ is known as the celebrated Kolmogorov equation. In the sequel we shall show that in this case the Lie point symmetry group is maximal.

The general element of the symmetry algebra  is written in the form
\begin{equation}\label{vf-kolmogorov}
  \mathbf{v}=\tau(t,x,y,u)\gen t+\xi(t,x,y,u)\gen x+\eta(t,x,y,u)\gen y+\varphi(t,x,y,u)\gen u.
\end{equation}
The linearized symmetry condition
$$\varphi^t-\varphi^{xx}+p'(x)\xi u_y+p(x)\varphi^y=0$$  should be satisfied on solutions. Here $\varphi^t, \varphi^{xx}, \varphi^y$ are the coefficients in the appropriate prolongation of $\mathbf{v}$
$$\varphi^t=D_tQ+\tau u_{tt}+\xi u_{tx}+\eta u_{ty}, \quad \varphi^y=D_yQ+\tau u_{yt}+\xi u_{yx}+\eta u_{yy},$$
$$\varphi^{xx}=D^2_{x}Q+\tau u_{xxt}+\xi u_{xxx}+\eta u_{xxy},$$ where $Q=\varphi-\tau u_t-\xi u_x-\eta u_y$ is the characteristic function.

We  find it more convenient to substitute $u_t$ from \eqref{gen-kolmogorov}, rather than $u_{xx}$ as was done in the previous example. The coefficients of the terms involving $u_{xxx}$ imply   that
$$\tau=\tau(t,y), \quad \xi=\xi(t,x,y), \quad \eta=\eta(t,y),  \quad \varphi=f(t,x,y)u+g(t,x,y).$$
From the linearity of the equation we a priori know that $g$ must solve the equation.
Using this simplification and setting the coefficients of the terms $u_{xx}, u_y, u_x$, and $u$ equal to zero lead to the overdetermined system of homogeneous linear PDEs
\begin{equation}\label{det-sys}
  \begin{split}
     & 2\xi_x-\tau_t-p\tau_y=0,  \\
      & \xi p'+p^2 \tau_y-\eta_t+p(\tau_t-\eta_y)=0, \\
      & \xi_t-\xi_{xx}+p\xi_y+2f_x=0, \\
      & f_{t}-f_{xx}+pf_y=0.
  \end{split}
\end{equation}
Inspection of the determining system \eqref{det-sys} suggests that while the principal symmetry algebra $\lie_0$ (valid for any $p$) is obvious
\begin{equation}\label{pr-lie}
  \lie_0=\curl{\mathbf{v}_1,\mathbf{v}_2,\mathbf{v}_3,\mathbf{v}_4}=\curl{\gen t, \gen y, u\gen u, g\gen u},
\end{equation}
where $g$ is a solution of \eqref{gen-kolmogorov},
it is not straightforward to obtain a classifying equation for $p$. To remedy this situation we pass to use the equivalence group and algebra of the equation. It is quite simple to find the equivalence transformations (including the discrete ones) for a linear equation as such and is given by
\begin{equation}\label{equiv-tr}
\begin{split}
   & \tilde{t}=\alpha^2 t+t_0, \quad \tilde{x}=\alpha x+x_0,  \quad \tilde{y}=\beta y+\delta t+y_0, \quad \tilde{u}=\gamma u+\rho(t,x), \\
    & \tilde{p}=\alpha^{-2}(\beta p+\delta),
\end{split}
\end{equation}
where $\alpha,\beta,\gamma,\delta, t_0, x_0, y_0$  are parameters and $\rho(t,x)$ solves $\rho_t-\rho_{xx}=0$. The continuous part of this group is generated by the equivalence algebra $\lie_{\mathcal{E}}$ with basis
\begin{equation}\label{equiv-algebra}
\begin{split}
   & \mathbf{e}_1=\gen t, \quad  \mathbf{e}_2=\gen x, \quad \mathbf{e}_3=\gen y,  \\
    & \mathbf{e}_4=t\gen y+\gen p, \quad \mathbf{e}_5=y\gen y+p\gen p, \\
    & \mathbf{e}_6=t\gen t+x\gen x-2p\gen p, \quad \mathbf{e}_7=u\gen u, \quad \mathbf{e}_8=\rho(t,x)\gen u.
\end{split}
\end{equation}
Now the idea is to project the algebra $\lie_{\mathcal{E}}$ to the space $(x,p)$ so that we have
$\tilde{\lie}_{\mathcal{E}}=\proj_{(x,p)}\lie_{\mathcal{E}}$ with basis
\begin{equation}\label{proj}
\tilde{\mathbf{e}}_2=\gen x, \quad \tilde{\mathbf{e}}_4=\gen p, \quad \tilde{\mathbf{e}}_5=p\gen p, \quad \tilde{\mathbf{e}}_6=x\gen x-2p\gen p.
\end{equation}
This is a four-dimensional solvable subalgebra. We are interested in the extensions of the principal algebra by searching the nontrivial invariant curves in the space $(x,p)$ to the algebra with general element
$$\tilde{\mathbf{e}}=c_2 \tilde{\mathbf{e}}_2+c_4 \tilde{\mathbf{e}}_4+c_5 \tilde{\mathbf{e}}_5+c_6 \tilde{\mathbf{e}}_6$$ for all possible choices of the parameters $c_2, c_4, c_5, c_6$. The possible invariants result from the following combinations
$$\mathbf{e}^{(1)}=x\gen x+\gen p, \quad \mathbf{e}^{(2)}=\gen x+p\gen p, \quad  \mathbf{e}^{(3)}=x\gen x+k p\gen p,$$
$$\mathbf{e}^{(4)}=x\gen x-2p\gen p,  \quad \mathbf{e}^{(5)}=\gen x+\gen p,$$ where $k$ is a constant. The corresponding independent invariants  lead to five different possibilities
\begin{equation}\label{invariants}
  p(x)\in\curl{\ln x, e^x, x^k, x^{-2},x}.
\end{equation}
Actually, they can be replaced by their slightly more general forms under the action of the equivalence group.
For example, $p(x)=x^k$ invariant under $\mathbf{e}^{(3)}$  is equivalent to $p(x)=\lambda(x+\mu)^k+\nu$.

The principal algebra $\lie_0$ will extend for these choices of the coefficient $p$. Given these values, the full symmetry algebra can be obtained by solving the determining equations \eqref{det-sys}. The strategy is to solve the second equation of \eqref{det-sys} for $\xi$
\begin{equation}\label{xi}
  \xi=-\frac{p^2}{p'}\tau_y+\frac{1}{p'}\eta_t-\frac{p}{p'}(\tau_t-\eta_y), \quad p'\ne 0
\end{equation}
and to substitute into the first one resulting in the equation
\begin{equation}\label{tau-eta}
  A(x)\tau_t+B(x)\tau_y+C(x)\eta_t+D(x)\eta_y=0,
\end{equation}
where
$$A(x)=2pp''-3p'^2,  \quad B(x)=p(2pp''-5p'^2), $$
$$C(x)=-2p'',  \quad D(x)=-2(pp''-p'^2).$$
This equation places restrictions on the dependence of $\tau$ and $\eta$ on $t, y$. With this information, the third equation is integrated to determine the form of $f$. Finally, all remaining dependencies  are explicitly specified from the last equation of \eqref{det-sys} as a compatibility condition. We note that $A(x)\equiv 0$ for $p(x)=x^{-2}$, while  $B,C,D$ remain nonzero,  which means that we must have $\tau=\tau(t)$ and $\eta=\text{const.}$ In a similar way, $C(x)\big\vert_{p=x}\equiv 0$,   and   $D(x)\big\vert_{p=e^x}\equiv 0$.

1.) $p(x)=\ln x$:  Splitting \eqref{tau-eta} with respect to $\ln x, (\ln x)^2$, it follows that
$$2(\eta_t+\eta_y)=3\tau_t, \quad \eta_y=5\tau_y+2\tau_t, \quad \tau_y=0.$$ This is easily solved for $\tau, \eta$. We  keep only the integration constants leading to the extensions. We find $\ddot{\tau}=0$ so we take $\tau=t$ ($\tau=1$ leads to the generator of the principal algebra) and then $\eta=y+t/2$, $\xi=x/2$. The remaining equations imply $f=0$ and we conclude that the additional symmetry element is
$$\mathbf{v}_4=2t\gen t+x\gen x+(t+2y)\gen y.$$

2.) $p(x)=e^x$:  Following the same vector fields as before we find two additional elements as
$$\mathbf{v}_4=\gen x+y\gen y, \quad \mathbf{v}_5=2y\gen x+y^2\gen y-(e^x+y)u\gen u.$$

We use $\mathbf{v}_5$ to obtain new solutions from the $y$-independent ($\gen y$ invariant) solution of \eqref{gen-kolmogorov}, namely the heat kernel
$$u=U(t,x)=\frac{c}{\sqrt{t}}\exp\left[-\frac{x^2}{4t}\right].$$ $\mathbf{v}_5$ transforms this solution to
$$u_{\varepsilon}(t,x,y)=\frac{c}{\sqrt{t}(1+\varepsilon y)}\exp\left[-\frac{\varepsilon e^x}{1+\varepsilon y}-\frac{\tilde{x}^2}{4t} \right],  \quad \tilde{x}=x-2\log(1+\varepsilon y).$$

Another class of solutions can be obtained from the $\gen t$ invariant solutions $U(x,y)$ satisfying the PDE
$$U_{xx}+e^x U_y=0.$$
A three-parameter separable solution in terms of Bessel functions of order zero is given by
\begin{equation}\label{separable-sol}
  U(x,y)=e^{\mu y}[A J_0(2\sqrt{\mu} e^{x/2})+B Y_0(2\sqrt{\mu} e^{x/2})],
\end{equation}
where $\mu$, $A, B$ are arbitrary constants. They are transformed to other time independent  solutions
$$u=c(1+\varepsilon y)^{-1}\exp\left[-\frac{\varepsilon e^x}{1+\varepsilon y} \right]U(\tilde{x},\tilde{y}),$$ where
$$\tilde{x}=x-2\log(1+\epsilon y),  \quad \tilde{y}=\frac{y}{1+\varepsilon y}.$$

The remaining symmetries can also be applied to this solution to produce new multi-parameter solutions.

3.) $p(x)=x^k$, $k\ne 0,1,-2$: Eq. \eqref{tau-eta} has the form
$$x^k[(k+2) \tau_t-2\eta_y]+(3k+2)x^{2k}\tau_y+2(k-1)\eta_t=0.$$ This implies that  the cases when $k\in\curl{1,-2,-2/3}$ need separate analysis. When this is not the case, we have
$$\eta_y=\frac{k+2}{2}\tau_t,  \quad \tau_y=\eta_t=0,$$
and then
$$\ddot{\tau}=0, \quad \eta_y=\eta'(y)=\frac{k+2}{2}\dot{\tau}.$$
We find only one additional symmetry
$$\mathbf{v}_4=2t\gen t+x\gen x+(k+2)y\gen y.$$

Below we shall see that the case $k=-2$  in which $\tau=\tau(t)$ and $\eta=0$ (we discard $\eta=\text{const.}$) and $k=1$ lead to more additional symmetries.

4.) $p(x)=x^{-2}$: The first two equations are already satisfied. The third one gives
$$f(t,x,y)=-\frac{\ddot{\tau}}{8}x^2+F(t,y).$$ The last equation determines $F$ and $\tau$ as
$$\dddot{\tau}=0,  \quad F_t=-\frac{\ddot{\tau}}{4}, \quad F_y=0.$$

In conclusion, the additional symmetry vector fields are obtained as
$$\mathbf{v}_4=2t\gen t+x\gen x,  \quad \mathbf{v}_5=4t^2\gen t+4tx\gen x-(2t+x^2)u\gen u.$$

The subalgebra $\curl{\mathbf{v}_1,\mathbf{v}_4,\mathbf{v}_5}$ spans an $\Sl(2,\mathbb{R})$ algebra, which also leaves invariant the one-dimensional heat equation $u_t-u_{xx}=0$. Acting on solutions by $\mathbf{v}_5$ induces the formula transforming solutions $U(t,x,y)$ to the new solutions
\begin{equation}\label{projective-action}
  u_{\varepsilon}(t,x,y)=\frac{1}{\sqrt{1+4\varepsilon t}}\exp\left[\frac{-\varepsilon x^2}{1+4\varepsilon t}\right]U(\tilde{t},\tilde{x},y),
\end{equation}
where $\tilde{t}=t(1+4\varepsilon t)^{-1}$, $\tilde{x}=x(1+4\varepsilon t)^{-1}$,  and $\varepsilon$ is the group parameter. Time independent solutions (invariant under $\mathbf{v}_1=\gen t$) of the form $u=U(x,y)$ satisfy the PDE (one-dimensional heat equation with variable diffusivity)
$$U_y=x^2U_{xx},$$ which is equivalent to the heat equation  \cite{Guengoer2018}
\begin{equation}\label{one-dim-heat}
 V_y=V_{zz}
\end{equation}
by the transformation
$$U=\sqrt{x}\exp\left[-\frac{y}{4}\right]V(z,y), \quad z=\ln x, \quad x>0.$$ This implies we can transform the heat  kernel  (fundamental solution up to a multiplicative constant)
\begin{equation}\label{heat-kernel}
  V(z,y)=\frac{C}{\sqrt{y}}\exp\left[-\frac{z^2}{4y}\right]
\end{equation}
to the  solution
\begin{equation}\label{quadratic-sol}
 u_{\varepsilon}(t,x,y)=\frac{c}{(1+4\varepsilon t)} \sqrt{\frac{x}{y}}\exp\left[-\frac{\varepsilon x^2}{1+4\varepsilon t}-\frac{y^2+z^2}{4y}\right], \quad z=\ln \tilde{x}.
\end{equation}
The translational symmetry group $t\to t+a$ puts the singularity at $t=-(4\varepsilon)^{-1}$ to some $t_0>0$.

\begin{remark}
In general, the time independent solutions of \eqref{gen-kolmogorov} satisfy the heat equation with diffusivity $a(x)=1/p(x)$ ($y$ plays the role of time)
$$U_y=a(x)U_{xx}.$$
It can be shown that such an equation is equivalent to the standard one by a point transformation if and only if $a(x)=(a_2 x^2+a_1 x+a_0)^2$ \cite{Guengoer2018},  which of course includes the case $p=x^{-2}$ discussed above.

Another choice when this is possible is the third case $p=x^k$ where $k=-4$. In this case, a point transformation taking
\begin{equation}\label{quartic-heat}
  U_y=x^4 U_{xx}
\end{equation}
to the heat equation  \eqref{one-dim-heat} is given by
$$U(x,y)=x V(y,z), \quad z=-\frac{1}{x}.$$

A particular extension of the equation under study allowing power diffusivity
\begin{equation}\label{extension-pde}
  u_t=x^5 u_{xx}-x u_y
\end{equation}
deserves consideration. In Ref. \cite{ZhangZhengGuoWu2020}, the symmetry algebra of this equation has been shown to be eight-dimensional with a basis
\begin{equation}\label{8-dim-lie}
  \begin{split}
     & \mathbf{v}_1=\gen t, \quad  \mathbf{v}_2=\gen y, \quad  \mathbf{v}_3=u\gen u,\\
      & \mathbf{v}_4=3t\gen t-x\gen x+2y \gen y, \quad \mathbf{v}_5=-y\gen t+x^2\gen x+x u\gen u,\\
      & \mathbf{v}_6=-\frac{y^2}{2}\gen t+x^2 y \gen x+\left(xy+\frac{1}{2x}\right)u\gen u,\\
      & \mathbf{v}_7=3ty\gen t-x(3tx+y)\gen x+y^2\gen y-(x^{-2}+3tx+3y)u\gen u,\\
      & \mathbf{v}_8=-\frac{y^3}{3}\gen t+x^2y^2\gen x+\left(xy^2+\frac{y}{x}-t\right)u\gen u.
  \end{split}
\end{equation}
One can transform time independent solutions $u=U(x,y)$ of \eqref{extension-pde}
$$U=\frac{Cx}{\sqrt{y}}\exp\left[-\frac{1}{4x^2 y^2}\right]$$ to new  time dependent  solutions using some appropriate subalgebra of the symmetry algebra.

Finally, we note the following solution singular at $(0,x_0,0),$ $x_0>0$
\begin{equation}\label{singular-sol}
  u=\frac{C H(t)x}{y^2}\exp\left[\frac{-3x_0^2t^2x^2+3x_0tx(x+x_0)y-(x^2+x_0x+x_0^2)y^2}{x_0^2x^2y^3}\right],
\end{equation}
where $H(t)$ is the Heaviside function and $C$ is a constant. This solution is invariant under the 4-dimensional subalgebra
\begin{equation}\label{4-dim}
  \begin{split}
     & \mathbf{w}_1=3x_0^2ty\gen t-x_0^2x(3tx+y)\gen x+x_0^2y^2\gen y+\left[1-\frac{x_0^2}{x^2}-3x_0^2tx-3x_0^2y\right]u\gen u, \\
      & \mathbf{w}_2=[3t(x_0+5x_0^3 y)-y]\gen t+[5x_0^3y-x_0x-(15x_0^3 t-1)x^2]\gen x+\\
      & \qquad +x_0y(5x_0^2y+2)\gen y+\left[x-\frac{5x_0^2}{x^2}-15x_0^3 tx-15x_0^3y\right]u\gen u,\\
      & \mathbf{w}_3=\frac{1}{2}(3x_0t-y)y\gen t-\frac{x}{2}[3x_0tx+(x_0-2x)y]\gen x+\frac{x_0}{2}y^2\gen y+\\
      & \qquad +\frac{1}{2x^2}[x-x_0-3x_0x^2y+(2y-3x_0t)x^3]u\gen u,\\
      & \mathbf{w}_4=\mathbf{v}_8,
  \end{split}
\end{equation}
leaving invariant the PDE
$$Lu=u_t+xu_y-x^5u_{xx}=\delta(t)\delta(x-x_0)\delta(y).$$

\end{remark}

5.) $p(x)=x$ (Kolmogorov equation): From Eq. \eqref{tau-eta} we get
$$3\tau_t+5x\tau_y-2\eta_y=0,$$ which implies $\tau=\tau(t)$, $\eta(t,y)=(3/2) \dot{\tau}y+h(t)$. Using these results in the third equation gives us
$$f(t,x,y)=-\frac{3\dddot{\tau}}{4}xy-\frac{\ddot{\tau}}{2}x^2-\frac{\ddot{h}}{2}x+F(y,t).$$
The last equation reduces to the condition
\begin{equation}\label{last-eq}
  \frac{x}{4}[5\dddot{\tau}x+3\tau^{(4)}y+2\dddot{h}]-\ddot{\tau}-xF_y-F_t=0.
\end{equation}
Splitting \eqref{last-eq} with respect to $x$ and $x^2$ gives
$$F_t=-\ddot{\tau},  \quad F_y=\frac{1}{2}\dddot{h}, \quad \dddot{\tau}=0$$ from which
$$F=\frac{1}{2}\dddot{h}y+H(t),  \quad h^{(4)}=0,  \quad \dot{H}=-\ddot{\tau}.$$
Finally,  the general element of the symmetry algebra depends on eight arbitrary integration constants, only five of them leads to additional vector fields:
\begin{eqnarray*}
\mathbf{v}_4&=&\gen x+t\gen y,  \quad \mathbf{v}_5=2t\gen t+x\gen x+3y\gen y-2u\gen u,  \\
\mathbf{v}_6&=& t^2\gen t+(xt+3y)\gen x+3ty\gen y-(2t+x^2)u\gen u, \\
\mathbf{v}_7&=& 3t^2\gen x+t^3\gen y+3(y-tx)u\gen u, \quad \mathbf{v}_8=2t\gen x+t^2\gen y-xu\gen u.
\end{eqnarray*}
We conclude that the maximal symmetry algebra is attained when $p(x)=x$. We note that this equation is also invariant under the discrete transformations
$$(t,x,y,u)\to (t,-x,-y,-u).$$
The construction of the Kolmogorov's fundamental solution $K(x,y,t;x_0,y_0,t_0)$ with the distributional limit property
$$\lim_{t\to t^{+}_0}K(x,y,t;x_0,y_0,t_0)=\delta(x-x_0,y-y_0),$$ using  the approach of Example \eqref{diff-pde} can be found in \cite{Guengoer2018a, KovalenkoKopasStogniy2013} and is given by
\begin{equation}\label{kolmogorov-fundamental}
  K=\frac{\sqrt{3}}{2\pi(t-t_0)^2}\exp\left[-\frac{(x-x_0)^2}{4(t-t_0)}-\frac{3}{(t-t_0)}
\left(\frac{y-y_0}{t-t_0}-\frac{x+x_0}{2}\right)^2\right].
\end{equation}

\end{example}

\begin{example}
  The nonlinear diffusion (porous medium) equation
\begin{equation}\label{poros}
  u_t=u^4 u_{xx}
\end{equation}
is invariant under a five dimensional symmetry group generated by
\begin{equation}\label{five-dim}
\begin{split}
   & \mathbf{v}_1=\gen t, \quad \mathbf{v}_2=\gen x,  \quad \mathbf{v}_3=2t\gen t+x\gen x,\\ & \mathbf{v}_4=x\gen x+\frac{u}{2}\gen u,  \quad \mathbf{v}_5=x^2\gen x+xu\gen u.
\end{split}
\end{equation}
The study of possible invariant solutions was presented in \cite{Gungor2002}. We shall only discuss solutions invariant under the dilatational subalgebra
$$\mathbf{d}=\mathbf{v}_3-\mathbf{v}_4=2t\gen t-\frac{u}{2}\gen u.$$ This implies the reduction ansatz $u=t^{-1/4}F(x)$ leading to the ODE  $F''=-1/4 F^{-3}$, which is the special form of the Ermakov--Pinney equation (see \eqref{EP}). This equation inherits the $\Sl(2,\mathbb{R})$ algebra with basis $\mathbf{v}_2, \mathbf{v}_4, \mathbf{v}_5$ of \eqref{five-dim}. Thus we obtain the particular solution
$$u=t^{-1/4}F(x)=\pm t^{-1/4}(A+2Bx+Cx^2)^{1/2}, \quad 4(B^2-AC)=1.$$

\end{example}

\begin{example}
We now consider the (2+1)-dimensional nonlinear Darboux equation
\begin{equation}\label{Darboux-eq}
   \square u+\frac{b}{t}u_t+au^k=0,
\end{equation}
where $a,b,k$ are arbitrary real parameters and $\square=\partial_{t}^{2}-\Delta_2=\partial_{t}^{2}-\partial_{x}^{2}-\partial_{y}^{2}$ is the wave (also called d'Alembert) operator in the (2+1)-dimensional Minkowski space with metric $dt^2-dx^2-dy^2$.

The linear wave equation  $\square u=0$ obtained for $a=0$, $b=0$ (see Example 2.43 of \cite{Olver1993}) is known to be invariant under the ten-dimensional conformal group $\conf(\mathbb{R}^{1,2})$ of the pseudo-Euclidean space $\mathbb{R}^{1,2}$. A suitable basis of the corresponding Lie algebra (conformal algebra) is given by the seven elements of the Poincar\'e-similitude subalgebra
\begin{equation}\label{p21}
\begin{split}
  & \mathbf{p}_t=\gen t,\quad \mathbf{p}_x=\gen x, \quad   \mathbf{p}_y=\gen y,   \\
  & \mathbf{k}_{xt}=t\gen x+x\gen t,  \quad    \mathbf{k}_{yt}=t\gen y+y\gen t, \quad   \mathbf{j}_{xy}=y\gen x-x\gen y,  \\
  & \mathbf{d}=\dil,
\end{split}
\end{equation}
plus the three conformal generators
\begin{equation}\label{p22}
\begin{split}
  & \mathbf{c}_t=(x^2+y^2+t^2)\gen t+2xt\gen x+2yt\gen y-tu\gen u, \\
  & \mathbf{c}_x=2xt\gen t+(t^2+x^2-y^2)\gen x+2xy\gen y-xu\gen u, \\
  & \mathbf{c}_y=2yt\gen t+2xy\gen x+(t^2-x^2+y^2)\gen y-yu\gen u.
\end{split}
\end{equation}
Eq. \eqref{Darboux-eq} can be viewed as a special case of the nonlinear perturbation of the singular operator $\square +(b/t)\gen t=0$
\begin{equation}\label{perturbed-wave}
  \square u+\frac{b}{t}u_t+f(t,u)=0.
\end{equation}
In this case the symmetry gets considerably reduced:
\begin{equation}\label{infsyn-sub-lin}
   \mathbf{p}_x=\gen x, \quad   \mathbf{p}_y=\gen y, \quad \mathbf{j}_{xy}=y\gen x-x\gen y. \end{equation}
In the special case $f(t,u)=f(u)$, invariance under the dilation
\begin{equation}\label{dil}
   \mathbf{d}=\dil+q u\gen u,   \quad q=\frac{2}{1-k},  \quad k\ne 1
\end{equation}
restricts $f$ to $f(u)=a u^k$ and  the corresponding equation becomes \eqref{Darboux-eq}. With the condition $k=(b+5)/(b+1)$, $b\ne -1$ ($q=-(b+1)/2$), it admits further the following two conformal transformations
\begin{equation}\label{infsym-Darboux-eq}
  \begin{split}
      & \mathbf{c}_x=2xt\gen t+(t^2+x^2-y^2)\gen x+2xy\gen y-(b+1)xu\gen u,\\
      & \mathbf{c}_y=2yt\gen t+2xy\gen x+(t^2-x^2+y^2)\gen y-(b+1)yu\gen u.
  \end{split}
\end{equation}

We comment that the singularly perturbed linear version of \eqref{perturbed-wave} with $f(t,u)=at^{-2}u$   preserves the same symmetry without any restriction on $q$ which can be taken zero.
If, in addition, $2b-b^2+4a=0$  then there is an additional generator
$$\mathbf{p}_t=\gen t-\frac{b}{2t}u\gen u,$$ which is mapped to the translational vector field $\gen t$ by the transformation $u=t^{-b/2}v$ and the new equation becomes the usual wave equation $\square v=0$.

On the other hand, the maximal symmetry algebra of \eqref{Darboux-eq} is achieved when $b=0$ ($k=5$), which reduces to the Klein--Gordon equation (also called classical $\phi^6$-field equation)
\begin{equation}\label{KG}
  \square u=a u^5.
\end{equation}
Its symmetry algebra is  precisely the 10-dimensional conformal algebra  with basis given  by \eqref{p21}-\eqref{p22}, excluding the linearity reflecting symmetry.

The 1+1-dimensional linear variant of \eqref{Darboux-eq} is known as the Euler--Poisson--Darboux (EPD) equation:
\begin{equation}\label{EPD}
  u_{tt}+\frac{b}{t}u_{t}-u_{xx}=0.
\end{equation}
The values $b=0,2$ are exceptional (Eq. is equivalent to the wave equation as mentioned above. See \eqref{vf-Liouv} for its symmetry algebra). The EPD equation admits a four-dimensional symmetry algebra with basis (\cite{Miller1973})
\begin{equation}\label{4-dim-sym}
\begin{split}
   & \mathbf{v}_1=\gen x, \quad \mathbf{v}_2=t\gen t+x \gen x,\\
    & \mathbf{v}_3=2tx\gen t+(x^2+t^2) \gen x-b x u\gen u, \quad \mathbf{v}_4=u\gen u.
\end{split}
\end{equation}
The non-zero commutation relations are
\begin{equation}\label{comm-sl2}
  [\mathbf{v}_1,\mathbf{v}_2]=\mathbf{v}_1,  \quad [\mathbf{v}_1,\mathbf{v}_3]=2\mathbf{v}_2, \quad [\mathbf{v}_2,\mathbf{v}_3]=\mathbf{v}_3.
\end{equation}
The symmetry algebra has the direct-sum structure $\lie=\Sl(2,\mathbb{R})\oplus \{\mathbf{v}_4\}$.

The one-dimensional subalgebras of $\lie$ (\cite{PateraWinternitz1977}) are
\begin{equation}\label{1-dim-subalgebras}
\begin{split}
   &  \curl{\mathbf{v}_1},  \quad \curl{\mathbf{v}_4},  \quad \curl{\mathbf{v}_2+p \mathbf{v}_4}, \quad p\geq 0, \\
    & \curl{\mathbf{v}_1+\mathbf{v}_3+q \mathbf{v}_4}, \quad \curl{\mathbf{v}_1+\epsilon\mathbf{v}_4}, \quad \epsilon=\pm 1, \quad q\in \mathbb{R}.
\end{split}
\end{equation}
We can put $\epsilon=1$ by invariance under $x\to -x$.
All  solutions invariant under these subalgebras can be found by symmetry reduction technique \cite{Miller1973}. For example, the solution corresponding to the subalgebra $\curl{\mathbf{v_1}+\mathbf{v_4}}$ has the form $u=e^{x}F(t)$, where $F(t)$ satisfies the ODE
$$tF''+bF'-tF=0$$ with solution in terms of modified Bessel functions  of the form $$F(t)=t^{(1-b)/2}I_{\pm (b-1)/2}(t).$$
For even integer $b$, this solution is elementary.

Solution  invariant under the third subalgebra admits the reduction formula $u=(x-t)^pF(\tau)$, $\tau=(x+t)/(x-t)$. Substitution into the EPD
equation gives the  hypergeometric equation
\begin{equation}\label{hypergeom}
  \tau(1-\tau)F''+\left[1-p-\frac{b}{2}-\left(1-p+\frac{b}{2}\right)\tau\right]F'+\frac{bp}{2}F=0.
\end{equation}
So the solution can be expressed in terms of hypergeometric function as
$$u=(x-t)^p {}_2F_1\left(-p,\frac{b}{2},1-p-\frac{b}{2};\tau\right), \quad 1-p-\frac{b}{2}\not\in \mathbb{Z}^{\leq 0}.$$
Solutions invariant under $ \curl{\mathbf{v}_1+\mathbf{v}_3}$ will have the form $u=t^{-b/2}F(\tau)$, $\tau=(1+x^2-t^2)/t$ where $F(\tau)$ satisfies
$$(\tau^2+4)F''+2\tau F'-\frac{b(b-2)}{4}F=0.$$
The special cases $b=0,2$ are trivially integrated. Otherwise, the general solution of this equation
can be expressed in terms of Legendre functions with imaginary argument $i\tau/2$ and order $(b-2)/2$, known as oblate spheroidal harmonics for $\tau>0$ and $b=2(n+1)$, $n\in \mathbb{Z}^{0}$.

The $\Sl(2,\mathbb{R})$ invariance can be used to derive identities for hypergeometric and Bessel functions, and transformation formulas for other solutions of the EPD equation.

The symmetry group corresponding to $\mathbf{v}_3$ is found by integrating the following Cauchy problem
\begin{equation}\label{Cauchy}
  \frac{d}{d\epsilon}\tilde{t}(t,x;\epsilon)=2\tilde{t}\tilde{x},  \quad \frac{d}{d\epsilon}\tilde{x}(t,x;\epsilon)=\tilde{t}^2+\tilde{x}^2, \quad \frac{d}{d\epsilon}\tilde{u}(t,x;\epsilon)=-b\tilde{x}\tilde{u}
\end{equation}
subject to the initial conditions $\tilde{t}(t,x;0)=t$, $\tilde{x}(t,x;0)=x$, $\tilde{u}(t,x;0)=u$. The first two equations written in canonical coordinates gives the translational group $\tilde{r}=r$ and $\tilde{s}=s+\epsilon$, where
$$r=\frac{t}{t^2-x^2},  \quad s=\frac{x}{t^2-x^2}$$ satisfying $\mathbf{v}_3(r)=0$, $\mathbf{v}_3(s)=1$. Solving  for $\tilde{t}$ and $\tilde{x}$ we find the transformation of $(t,x)$:
$$\tilde{t}=\frac{t}{1-2\epsilon x-\epsilon^2(t^2-x^2)},  \quad \tilde{x}=\frac{x+\epsilon(t^2-x^2)}{1-2\epsilon x-\epsilon^2(t^2-x^2)}.$$ Finally integrating the last equation in \eqref{Cauchy} we find
$$\tilde{u}=\sigma(t,x;\epsilon)^{b/2}u,  \quad \sigma(t,x;\epsilon)=1-2\epsilon x-\epsilon^2(t^2-x^2).$$ The factor $\sigma$ has the property $\sigma(t,x;\epsilon)=\sigma(\tilde{t},\tilde{x};-\epsilon)^{-1}$. The symmetry group action on a solution $U(t,x)$ induces the transformation formula for solutions
$$u(t,x)=\sigma(t,x;-\epsilon)^{-b/2}U\left(\frac{t}{\sigma(t,x;-\epsilon)},\frac{x-\epsilon(t^2-x^2)}{\sigma(t,x;-\epsilon)}\right).$$

In Ref. \cite{Miller1973},  transformations leaving EPD equation invariant but  changing the parameter $b$ to derive a variety of generating functions for the standard hypergeometric functions ${}_2F_1$ were also considered.

The Euler--Darboux (ED) equation
\begin{equation}\label{ED}
  u_{\xi\eta}+\frac{1}{\xi+\eta}(\alpha u_{\xi}+\beta u_{\eta})=0
\end{equation}
includes the EPD equation \eqref{EPD} as a special case. This equation has important applications in aerodynamics. The change of coordinates $\xi=t+x$, $\eta=t-x$ transforms \eqref{EPD} to \eqref{ED} with $\alpha=\beta=b/2$. Eq. \eqref{ED} for $\alpha=\beta=N\in \mathbb{Z}$ admits a general solution
\begin{equation}\label{gensol}
  u(\xi,\eta)=\frac{\partial^{N-1}}{\partial \xi^{N-1}}\frac{f(\xi)}{\xi+\eta}+\frac{\partial^{N-1}}{\partial \eta^{N-1}}\frac{g(\eta)}{\xi+\eta},
\end{equation}
where $f$ and $g$ are arbitrary functions. This implies that the general solution of \eqref{EPD} can be obtained for  $b=2N$.
For example, for $N=1$ ($b=2$), the d'Alembert  solution of the radial wave equation is obtained
$$u=\frac{f(\xi)+g(\eta)}{\xi+\eta}=\frac{1}{2t}(f(t+x)+g(t-x)).$$

A basis for the symmetry algebra of \eqref{ED} is
\begin{equation}\label{gen-ED}
\begin{split}
   & \mathbf{v}_1=\gen \xi- \gen \eta,  \quad \mathbf{v}_2=\xi\gen \xi+\eta \gen \eta-\frac{(\alpha+\beta)}{2}u\gen u, \\
    & \mathbf{v}_3=\xi^2\gen \xi-\eta^2 \gen \eta-(\beta \xi-\alpha \eta)u\gen u, \quad \mathbf{v}_4=u\gen u
\end{split}
\end{equation}
with the same non-zero commutation relations \eqref{comm-sl2}. The EPD and ED equations have isomorphic symmetry algebras. For values $\alpha=\beta\not=2N$,  we have symmetry methods at our disposal for producing exact solutions of the ED equation.

A complete classification of group invariant solutions of \eqref{KG} would go far beyond the scope of these notes. Instead, we will focus on the solutions invariant under one-parameter subgroup generated by the conformal elements
$$\mathbf{c}=\alpha _0 \mathbf{c}_t+\alpha _1 \mathbf{c}_x+\alpha _2 \mathbf{c}_y,$$ where $\alpha _0, \alpha _1, \alpha _2$ are arbitrary parameters. Invariance under this subalgebra leads to the reduction formula
\begin{equation}\label{ansatz}
u=r^{-1}f(\xi),\quad \xi =r^{-2}(\beta _0 t+\beta _1 x+\beta _2
y),\quad r^2=t^2-x^2-y^2
\end{equation}
with the constraint among the parameters
$$\beta _0 \alpha _0-\beta _1 \alpha _1-\beta _2 \alpha _2=0.$$
Substitution of \eqref{ansatz} into \eqref{KG} gives the second order ODE
\begin{equation}\label{reduced}
\beta ^2 f''=a f^5,\qquad \beta ^2=\beta _0^2-\beta _1^2-\beta _2^2\ne 0,
\end{equation}
which has the first integral
$$f'^2=\frac{a}{3\beta^2}(f^6+c).$$
A further change of the dependent variable $f=F^{-1/2}$ simplifies it to
$$F'^2=\frac{4a}{3\beta^2}(c F^3+1),$$
whose solution for $c\ne 0$ can be expressed in terms of Weierstrass $\wp$ function as
$$F=\wp(z;0,g_3), \quad z=\sqrt{\frac{ac}{3\beta^2}}(\xi+\xi_0), \quad g_3=-4c^{-1}.$$

In addition to the invariant solutions, the subgroup action on solutions obtained through the exponentiation  $\exp\curl{\varepsilon_0 \mathbf{c}_t+\varepsilon_1 \mathbf{c}_x+\varepsilon_2 \mathbf{c}_y}$  can be utilized to derive a formula for the transformation of solutions. More precisely, whenever $U(\mathbf{r})=U(t,x,y)$ is a solution
to \eqref{KG}, so is
$$u({\bf r})=\sigma ^{-1/2} U({\bf \tilde r}),\quad
\sigma(\mathbf{r};\boldsymbol{\varepsilon}) =1-2{\boldsymbol{\varepsilon}}.{\bf r}+\varepsilon ^2 r^2,\quad \tilde
{\bf r}=\sigma ^{-1}({\bf r}-r^2{\boldsymbol{\varepsilon}}), \quad {\bf
r}=(t,x,y),$$ where $\boldsymbol{\varepsilon}=(\varepsilon _0,\varepsilon
_1,\varepsilon _2)$ is the vector of group parameters, $r^2=t^2-x^2-y^2$ and $\varepsilon^2=|\boldsymbol{\varepsilon}|^2=\varepsilon_0^2+\varepsilon_1^2+\varepsilon_2^2$. Note the relation $\sigma(\mathbf{r};\boldsymbol{\varepsilon}) =\sigma(\mathbf{\tilde{r}};-\boldsymbol{\varepsilon})^{-1}$. The reader can compare the conformal transformations $\tilde
{\bf r}=\sigma ^{-1}({\bf r}-r^2{\boldsymbol{\varepsilon}})$ of the $\mathbb{R}^{1,2}$ space leaving its metric form invariant, $d\tilde{\mathbf{r}}^2=\sigma^{-2}d\mathbf{r}^2$,   with those of the Euclidean plane $\mathbb{R}^2$ given by \eqref{conf-cx}-\eqref{conf-cy}.

\end{example}

\begin{remark}
The $(n+1)$-dimensional conformally invariant extension of Eq. \eqref{KG} \cite{FushchichShtelenSerov1993} is
\begin{equation}\label{gen-KG}
   \square u(x)=u_{x_0x_0}-\Delta_n u=a u^{(n+3)/(n-1)}, \quad n\geq 2,
\end{equation}
where $\Delta_n=\sum_{\mu=1}^n u_{x_\mu x_\mu}$ is the Laplace operator on $\mathbb{R}^{n}$ and $x=(x_0,x_1,\ldots,x_n)\in \mathbb{R}^{1,n}$.
The symmetry algebra is $(n+2)(n+3)/2$-dimensional conformal algebra of the conformal group $\conf(\mathbb{R}^{1,n})$ with basis
\begin{equation}\label{sym-conf-inv-KG}
  \begin{split}
     &  \mathbf{p}_\mu=\gen {x_\mu}, \quad \mathbf{d}=\sum_{\mu=0}^{n}x_\mu\gen {x_\mu}+\frac{1-n}{2}u\gen u, \quad \mu=0,1,2,\ldots,n,\\
     &   \mathbf{j}_{\mu\nu}=x_\mu\gen {x_\nu}-x_\nu\gen {x_\mu}, \quad \mathbf{j}_{0\nu}=x_0\gen {x_\nu}+x_\nu\gen {x_0}, \quad \mu,\nu=1,2,\ldots,n\\
     & \mathbf{c}_\mu=2x_\mu\mathbf{d}-\varepsilon x^2\gen {x_\mu}+(1-n)x_{\mu}u\gen u, \quad x^2=x_0^2-\sum_{\mu=1}^{n}x_\mu^2,
     \end{split}
\end{equation}
where $\varepsilon=1$ for $\mu=0$ and $\varepsilon=-1$ for $\mu=1,2,\ldots,n$. The subalgebras spanned by the basis elements $\curl{\mathbf{p}_\mu, \mathbf{j}_{\mu\nu}, \mathbf{j}_{0\nu}}$ and $\curl{\mathbf{p}_\mu, \mathbf{j}_{\mu\nu}, \mathbf{j}_{0\nu}, \mathbf{d}}$ are called the Poincar\'e algebra  of dimension $(n+1)(n+2)/2$ and the similitude algebra of dimension $(n^2+3n+4)/2$, respectively.
\end{remark}

\subsection{Linearization by symmetry structure}
We know that a linear differential equation or one that is linearizable by a point transformation admits an infinite-dimensional symmetry algebra. By checking the existence of such a symmetry structure we can construct linearizing transformations. A well-known example is the potential Burgers' equation \cite{Olver1993}
\begin{equation}\label{Burgers}
  u_t=u_{xx}+u_{x}^2,
\end{equation}
admitting the infinite-dimensional symmetry algebra generated by the vector fields
$$\mathbf{v}_1=\gen x, \quad \mathbf{v}_2=\gen t, \quad \mathbf{v}_3=\gen x, \quad \mathbf{v}_4=2t\gen x-x\gen u,$$
$$\mathbf{v}_5=2t\gen t+x\gen x,  \quad \mathbf{v}_6=4t^2\gen t+xt\gen x-(2t+x^2)\gen u, \quad \mathbf{v}_{\rho}=\rho(x,t)e^{-u}\gen u,$$
where $\rho$ is a solution to the linear heat equation $\rho_t=\rho_{xx}$. It is easy to see that the point transformation $\tilde{u}=e^u$ maps the basis vector fields to \eqref{heat-symm-basis} of the linear equation together with $\tilde{\mathbf{v}}_{\rho}=\rho(x,t)\gen {\tilde{u}}$ and Eq. \eqref{Burgers} to the linear heat equation $\tilde{u}_t=\tilde{u}_{xx}$. The transformation $v=u_x$ relates \eqref{Burgers} to the usual Burgers' equation
\begin{equation}\label{Burgers2}
  v_t=v_{xx}+2vv_x.
\end{equation}
So we have established the celebrated Cole--Hopf transformation $v=(\ln \tilde{u})_x=\tilde{u}_x/\tilde{u}$ taking solutions of  \eqref{Burgers2} to positive solutions of the heat equation.

As well, for higher order PDEs, we can use the same method of looking at the maximal point symmetry algebra (if possible) and constructing a linearizing transformation from its infinite-dimensional symmetry involving an arbitrary function as being solution to a linear PDE. We comment that there might be PDEs with finite symmetry algebra that can be linearized by nonpoint transformations.

\begin{example}(Exercise 2.22 of \cite{Olver1993})
  The nonlinear Thomas equation
\begin{equation}\label{thomas}
  \mathsf{E}=u_{tx}+\alpha u_t+\beta u_x+\gamma u_x u_t=0, \quad \gamma\ne 0
\end{equation}
can be shown to be linearizable by a point transformation. We first compute its symmetry algebra. A general element of the algebra is written in the form
\begin{equation}\label{vf-thomas}
  \mathbf{v}=\tau(t,x,u)\gen t+\xi(t,x,u)\gen x+\varphi(t,x,u)\gen u.
\end{equation}
We need only the prolongation of $\mathbf{v}$ involving coefficients of $u_{tx}, u_t, u_x$
$$\pr{2}\mathbf{v}=\mathbf{v}+\varphi^t\gen {u_t}+\varphi^x\gen {u_x}+\varphi^{tx}\gen {u_{tx}}.$$
Acting on $\mathsf{E}$ by the prolonged vector field provides the linearized equation
\begin{equation}\label{linearized}
 \varphi^{tx}+(\alpha+\gamma u_x)\varphi^t+(\beta+\gamma u_t)\varphi^x=0.
\end{equation}
Replacing $u_{tx}$ by $-(\alpha u_t+\beta u_x+\gamma u_x u_t)$ and splitting with respect to the derivatives $u_{tt}, u_{tt}u_x$ and  $u_{xx}, u_{xx}u_t$ we find
$$\tau_x=\tau_u=0,  \qquad \xi_t=\xi_u=0.$$ Using the simplification $\tau=\tau(t)$ and $\xi=\xi(x)$ and splitting the remaining determining equations resulting from \eqref{linearized}  with respect to the  derivatives involving $u_tu_x$, $u_t$, $u_x$ we find
\begin{equation}\label{det-eqs}
  \begin{split}
     u_tu_x: & \quad \varphi_{uu}+\gamma \varphi_u=0, \\
     u_t:  &  \quad   \varphi_{xu}+\gamma \varphi_x+\alpha \xi'=0, \\
     u_x:  & \quad \varphi_{tu}+\gamma \varphi_t+\beta \dot{\tau}=0,   \\
     u^0:  & \quad \varphi_{tx}+\alpha \varphi_t+\beta \varphi_x=0.
   \end{split}
\end{equation}
From the first equation we have $\varphi(t,x,u)=f(t,x)+e^{-\gamma u}g(t,x)$, $\gamma\ne 0$. Substitution back into \eqref{det-eqs} determines the form of $f$
$$f(t,x)=-\frac{1}{\gamma}(\alpha \xi+\beta \tau)+c,$$ where $c$ is an arbitrary constant. The last equation of  \eqref{det-eqs} gives
$$e^{-\gamma u}(g_{tx}+\alpha g_t+\beta g_x)=\frac{\alpha\beta}{\gamma}(\dot{\tau}+\xi').$$
This implies that if one of $\alpha, \beta$ is zero then $g$ has to satisfy a linear PDE
and $\tau$, $\xi$ remain arbitrary. The symmetry algebra is then infinite dimensional and is spanned by
$$\mathbf{v}=\tau(t)\gen t+\xi(x)\gen x+\left[-\frac{\alpha}{\gamma}\xi(x)+c+g(t,x)e^{-\gamma u}\right]\gen u$$ if $\beta=0$, and
$$\mathbf{v}=\tau(t)\gen t+\xi(x)\gen x+\left[-\frac{\beta}{\gamma}\tau(t)+c+g(t,x)e^{-\gamma u}\right]\gen u$$ if $\alpha=0$. The function $g$ satisfies $g_{tx}+\alpha g_t=0$ and $g_{tx}+\beta g_x=0$, respectively.

Otherwise, we must have $\dot{\tau}+\xi'=0$. This fixes $\tau$ and $\xi$ as $\tau=\lambda t+c_1$, $\xi=-\lambda x+c_2$, where $\lambda, c_1,c_2$ are arbitrary constants. Summarizing, we conclude that the general element  $\mathbf{v}$ depends on four parameters $c_1,c_2, c, \lambda$ and an arbitrary solution of a linear PDE.
So \eqref{thomas} is invariant under an infinite dimensional symmetry algebra with the basis vector fields
\begin{equation}\label{thomas-gen}
\begin{split}
    & \mathbf{v}_1=\gen t, \quad \mathbf{v}_2=\gen x, \quad \mathbf{v}_3=\gen u, \quad \mathbf{v}_4=t\gen t-x\gen x+\gamma^{-1}(\alpha x-\beta t)\gen u, \\
    &   \mathbf{v}(\rho)=e^{-\gamma u}\rho(t,x)\gen u,
\end{split}
\end{equation}
where $\rho(t,x)$ solves the linear equation
\begin{equation}\label{linear-pde}
  \rho_{tx}+\alpha \rho_t+\beta\rho_x=0.
\end{equation}
\end{example}
The fact that the Lie symmetry  algebra depends on solutions of a linear PDE suggests that there must exist a change of dependent variable that will linearize the original equation. Such a transformation is given by $\tilde{u}=F(u)$ with the property $F'=e^{\gamma u}$ so that $\tilde{u}=\gamma^{-1}e^{\gamma u}$ where $\tilde{u}$ satisfies
$$\tilde{u}_{tx}+\alpha\tilde{u}_{t}+\beta\tilde{u}_{x}=0.$$ The linear transformation $\tilde{u}=e^{-\alpha x-\beta t}v(t,x)$ removes the parameters $\alpha$ and $\beta$, resulting in
\begin{equation}\label{hyperbolic-pde}
  v_{tx}-\alpha \beta v=0.
\end{equation}
This equation inherits
the symmetry  $\tilde{\mathbf{v}}_4=t\gen t-x\gen x$ from $\mathbf{v}_4$. Its invariants are $\tau=tx$ and $v$ so we reduce it by the transformation $v=G(\tau)$ to the ODE
$$\tau G''+G'-\alpha\beta G=0.$$ The general solution of this equation can be written in terms of the modified Bessel functions
$$G(\tau)=c_1 I_0(2\sqrt{\alpha\beta \tau})+c_2 K_0(2\sqrt{\alpha\beta \tau}).$$
The choice $c_1=1$, $c_2=0$ yields the fundamental solution (or Green's function) at the origin $(0,0)$ (see \cite{Aksenov2017})
$$G(t,x;0,0)=I_0(2\sqrt{\alpha\beta tx}).$$ It is sufficient to use the translational invariance to obtain the Green's function at $(t_0,x_0)$
$$G(t,x;t_0,x_0)=I_0\left(2\sqrt{\alpha\beta (t-t_0)(x-x_0)}\right).$$
A particular the solution of the given PDE will be in the form
$$u=\frac{1}{\gamma}\log[\gamma G(tx)-\alpha x-\beta t].$$

\begin{example}
We consider a third order KdV-type evolution equation
  $$u_t=u_{xxx}+6u^{-1}u_xu_{xx}+u^{-2}u_x^3.$$
The symmetry algebra is infinite-dimensional with basis elements
$$\mathbf{v}_1=\gen t, \quad \mathbf{v}_2=\gen x, \quad  \mathbf{v}_3=3t\gen t+x\gen x,  \quad \mathbf{v}_4=u\gen u, \quad \mathbf{v}_{\rho}=\rho(t,x)u^{-2}\gen u,$$ where $\rho$ is any solution to the linear KdV equation $\rho_t=\rho_{xxx}$. The transformation $\tilde{u}=u^3$ maps the symmetry algebra to that of the linear KdV equation $\tilde{u}_t=\tilde{u}_{xxx}$,  and hence the original equation to the linear one.
\end{example}

It may not always be practical to calculate the maximal infinite-dimensional symmetry algebra by application of the Lie symmetry algorithm. When this is the case, it is usually useful to give some tests involving only certain finite-dimensional subalgebra of the maximal symmetry algebra. But, in this case, for PDEs the construction of linearizing transformation is a tricky task.

For ODEs, we already encountered such a test (due to Lie) in Example \ref{2nd-ODE-build}. See also Remark \ref{Lie-canonical}. It is sufficient to identify a two-dimensional subalgebra equivalent to the canonical forms $A_{2,2}$ (abelian) or $A_{2,4}$ (nonabelian) as the eight-dimensional symmetry algebra of a linearizable second order ODE. Finding linearizing coordinates is immediate. A further example is the following second member of Riccati chain, known to be linearizable to $z_3=z'''=0$ by the Hopf--Cole transformation $y=z'(x)/z(x)$.

\begin{example}\label{ex-2nd-chain}
\begin{equation}\label{2nd-chain}
  \mathsf{E}(x,y^{(2)})=y_2+3yy_1+y^3=0.
\end{equation}
This equation admits a linearly connected (rank-one) nonabelian 2-dimensional algebra  spanned by
$$\mathbf{v}_1=y\gen x-y^3\gen y,  \quad \mathbf{v}_2=\lambda(x,y)\mathbf{v}_1,  \quad \lambda(x,y)=\frac{x}{y}\left(1-\frac{1}{2}xy\right)$$
with commutation relation $[\mathbf{v}_1,\mathbf{v}_2]=\mathbf{v}_1$.
This is verified by noting
$$\pr{2}\mathbf{v}_1(\mathsf{E})=-3(y^2+y_1)\mathsf{E},$$ and
$$\pr{2}\mathbf{v}_2(\mathsf{E})=\frac{1}{2}[3x^2(y^2+y_1)-4]\mathsf{E},$$ then checking the infinitesimal invariance criterion \eqref{inv-criter}. This knowledge ensures us that Eq. \eqref{2nd-chain} has a eight-dimensional symmetry algebra and thus can be linearized.

A linearizing coordinate transformation $r=r(x,y)$, $s=s(x,y)$ is found by solving the set of linear first order PDEs $\mathbf{v}_1(r)=0$, $\mathbf{v}_1(s)=1$, $\mathbf{v}_2(r)=0$, $\mathbf{v}_2(s)=s$ as
$$r=x-\frac{1}{y},  \quad s=\frac{x}{y}-\frac{x^2}{2}.$$ Note that we simply have chosen $r$ as a joint invariant and $s=\lambda(x,y)$. The transformed algebra is the canonical form $A_{2,4}:\curl{\gen s,s\gen s}$. This transformation linearizes Eq. \eqref{2nd-chain} to $s''(r)=0$. This is readily seen by calculating the following derivative and using the given equation:
$$\frac{d^2s}{dr^2}=-\frac{y^3(y_2+3yy_1+y^3)}{(y^2+y_1)^3}=0.$$ We note that the full eight-dimensional symmetry algebra of the equation is isomorphic to  that of the equation  $s''=0$. This is the $\Sl(3,\mathbb{R})$ algebra generated by the vector fields
\begin{equation}\label{sl3}
  \begin{split}
     & \gen r,  \quad r \gen r,  \quad s \gen r, \quad rs\gen r+s^2\gen s, \\
      & \gen s,  \quad r \gen s,  \quad s \gen s, \quad r^2\gen r+rs\gen s.
  \end{split}
\end{equation}
The corresponding  group  is just the  projective group $\SL(3,\mathbb{R})$ in the $(r,s)$ plane
$$(\tilde{r},\tilde{s}): \quad (r,s)\to \left(\frac{a_1 r+a_2 s+a_3}{a_7 r+a_8 s+a_9},\frac{a_4 r+a_5 s+a_6}{a_7 r+a_8 s+a_9}\right)$$ with the condition
$$\begin{vmatrix}
    a_1 & a_2 & a_3  \\
    a_4 & a_5 & a_6 \\
    a_7 & a_8 & a_9
  \end{vmatrix}\ne 0.$$
The  projective group $\SL(3,\mathbb{R})$ maps the family of straight lines  onto themselves. Another interesting fact is that this group has the lowest order differential invariant starting at order 7. But obviously there are lower order relative differential invariants such as the second order one $I=s''(r)$. This means that, for some differential function $\mu$, $\tilde{s}''(\tilde{r})=\mu s''(r)$ under the projective group.  A differential function $I:J^n\to \mathbb{R}$ is a relative differential invariant of order $n$ of a Lie algebra $\lie$ if for some differential function $\lambda(x,u^{(n)})$
$$\pr{n}\mathbf{v}(I)=\lambda(x,u^{(n)})I$$  for every prolonged vector field $\pr{n}\mathbf{v}\in\lie^{(n)}$. If $\lambda=0$, $I$ is a differential invariant of order $n$. The seventh order differential invariant can be expressed in terms of relative differential invariants (see \cite{KomrakovLychagin1993}).
\end{example}

In the following example we shall make use of the existence of an infinite-dimensional symmetry algebra to find a linearizing transformation.
\begin{example}
\begin{equation}\label{lineariz-ode}
  y_2+p(x)y_1+q(x)y=\sigma \frac{y_1^2}{y},
\end{equation}
where $\sigma$ is an arbitrary constant.
This equation is invariant under the abelian symmetry algebra represented by the vector fields
$$\mathbf{v}_1=y\gen y,  \quad \mathbf{v}_2(\rho)=\rho(x)y^{\sigma}\gen y, \quad \sigma\ne 1,$$   where $\rho(x)$ satisfies the linear equation
\begin{equation}\label{lin-f}
  \rho''(x)+p\rho'+(1-\sigma)q\rho(x)=0.
\end{equation}
We note that under the condition \eqref{lin-f}
$$\pr{2}\mathbf{v}_2(\mathsf{E})=\lambda(x,y) \mathsf{E}, \quad \lambda(x,y)=\sigma \rho(x) y^{\sigma-1},$$ where $\mathsf{E}=y_2+py_1+qy-\sigma y^{-1} y_1^2$. The vector fields
$\mathbf{v}_1, \mathbf{v}_2(\rho)$ reflect the linearity of the equation modulo a transformation. Indeed, the point transformation $y=z^{1/(1-\sigma)}$, $\sigma\ne 1$ maps $\mathbf{v}_2(\rho)$ to $\tilde{\mathbf{v}}_2(\rho)=\rho(x)\gen z$ (preserving the homogeneity  in $z$) and the equation to the linear homogeneous one
$$z_2+p(x)z_1+(1-\sigma)q(x)z=0.$$
If we extend Eq. \eqref{lineariz-ode} to
\begin{equation}\label{lineariz-ode-2}
  y_2+p(x)y_1+q(x)y=\sigma \frac{y_1^2}{y}+g(x,y)
\end{equation}
and impose invariance under $\mathbf{v}_2(\rho)$ we find $g(x,y)=h(x)y^{\sigma}$, $\sigma\not=1$. If $\sigma=1$ we go back to the case $h=0$. The resulting equation  continues to be linearizable by the same transformation. The transformed equation is the nonhomogeneous linear one
$$z_2+p(x)z_1+(1-\sigma)q(x)z=(1-\sigma)h(x).$$
\end{example}

\begin{example}
  Reduction to canonical form (with the coefficients $p,q$ absent) of the second order linear ODE
\begin{equation}\label{2nd-homo-ode}
  y''+p(x)y'+q(x)y=0.
\end{equation}
Let $y_1,y_2=\sigma(x)y_1$, $y_1\sigma'\ne 0$ be linearly independent solutions of \eqref{2nd-homo-ode} with  Wronskian $W(x):=W(y_1,y_2)$.
By the linear superposition principle, it admits a two-parameter symmetry group
\begin{equation}\label{superpos}
  (x,y)\to(x,y+\varepsilon_1 y_1+\varepsilon_2 y_2)
\end{equation}
generating the two-dimensional rank-1 abelian subalgebra of type $A_{2,2}$ (see \eqref{two-dim-cano-abelian}) of its eight-dimensional full symmetry algebra with basis
\begin{equation}\label{superpos-alg}
  \mathbf{v}_1=y_1\gen y, \quad  \mathbf{v}_2=y_2\gen y=\sigma(x)  \mathbf{v}_1.
\end{equation}
The canonical coordinates for the algebra \eqref{superpos-alg} are
\begin{equation}\label{r-s}
  r=\sigma(x)=\frac{y_2}{y_1},  \quad s=\frac{y}{y_1}
\end{equation}
Action of the chain rule formula $D_r=(D_x r)^{-1}D_x$ for the total differentiation operator on $s$
\begin{equation}\label{dsdr2}
\frac{d^2s}{dr^2}=D^2_rs=-(y_1\sigma')^{-3}(y'y_1-yy'_1)[(p\sigma'+\sigma'')y_1+2y'_1\sigma'], \end{equation}
implies that if we choose $\sigma$ as
$$r=\sigma=\int y_1^{-2} \, e^{-\int p dx} dx,$$
then  the given ODE is reduced to the canonical form $d^2s/dr^2=0$. A basis of the Lie symmetry algebra of the canonical equation is given by \eqref{sl3}. The pair of canonical coordinates $(r,s)$ given by
$$r=\sigma,  \quad s=\frac{y}{y_1}$$ can be regarded as a special form of the equivalence transformation of \eqref{2nd-homo-ode}
\begin{equation}\label{equiv-tr-lode}
  \bar{x}=\phi(x),  \quad y=\psi(x)\bar{y},  \quad \phi'(x)\ne 0, \quad \psi\ne 0.
\end{equation}
The solution $s=c_1+c_2 r$ of the canonical equation produces  the well-known general solution formula $y=c_1 y_1+c_2 y_2=y_1(c_1 +c_2 \sigma)$ (superposition principle).

\begin{remark}
  For an  ODE of order  $n>2$, Lie proved that the dimension $d_n$  of the Lie algebra of its symmetry group is at most $n+4$. Maximal dimension $d_n=n+4$ is reached for a linear homogeneous ODE, equivalent to the reduced  one $y_n=y^{(n)}=0$ by the point transformations in the plane. The corresponding Lie group is the semi-direct sum of $\GL(n,\mathbb{R})$ with an $n$-dimensional abelian Lie group. A basis for its Lie algebra is given by
  \begin{equation}\label{basis-n+4}
    \begin{split}
       & \mathbf{v}_1=y\gen y,\quad \mathbf{v}_2=\gen x, \quad \mathbf{v}_3=x\gen x, \quad \mathbf{v}_4=x^2\gen x+(n-1)xy\gen y, \\
        & \mathbf{v}_{i+4}=y_{i}\gen y, \quad i=1,2,\cdots, n,
    \end{split}
  \end{equation}
where $y_i$'s form a basis for the solution space to this equation.

  The general solution of the equation can be written in the form
\begin{equation}\label{gen-sol-max-sym}
  y=\sum_{i=1}^n c_i u^{n-i}_1u^{i-1}_2,
\end{equation}
   where $u_1, u_2$ are linearly independent solutions to a second order linear, homogeneous ODE. For example, the third order linear ODE \eqref{3rd-lin} has a Lie point symmetry group of dimension seven and admits the general solution
\begin{equation}\label{iterate-sol}
  \xi=c_1 u_1^2+c_2 u_1u_2+c_3 u_2^2,
\end{equation}
where $u_1,u_2$    are solutions to the second order ODE $u''+Iu=0$.
\end{remark}

If we extend \eqref{2nd-homo-ode} to the nonlinear form (a variant of the Ermakov--Pinney equation)
\begin{equation}\label{GEP-0}
  y''+p(x)y'+q(x)y=\tau(x)y^{-3}, \quad \tau(x)\ne 0,
\end{equation}
then in terms of $(r,s)$ in \eqref{r-s} we have
\begin{equation}\label{dsdr2-GEP}
  \frac{d^2s}{dr^2}=e^{2\int p dx}\tau(x) s^{-3}.
\end{equation}
We require that the left hand side of \eqref{dsdr2-GEP} depends only on $s$. This implies
$$\tau(x)=Ke^{-2\int p dx}$$ for some nonzero constant $K$. Using Abel's formula for Wronskians we can write $\tau(x)=K W_0^{-2}W^2(x)$, where $W_0\not=$ is the value of $W(x)$ at some  point $x_0$.
With this condition we conclude that a slightly modified form of the standard Ermakov--Pinney equation
\begin{equation}\label{GEP}
  y''+p(x)y'+q(x)y=KW_0^{-2}W^2(x)y^{-3}
\end{equation}
is equivalent to the standard one $s''=KW_0^{-2}s^{-3}$ (see \eqref{specialEP}) admitting the general solution
$$s^2=A+2B r+Cr^2, \quad AC-B^2=KW_0^{-2}.$$ Hence, a nonlinear superposition principle for the  general solution of \eqref{GEP-0} is reached in the form
\begin{equation}\label{nonlin-superpos}
  y^2=s^2 y_1^2=A y_1^2+2B y_1 y_2+C y_2^2,  \quad (AC-B^2)W_0^2=K
\end{equation}
in terms of the linearly independent solutions of the auxiliary linear equation \eqref{2nd-homo-ode}. We already encountered this remarkable equation  a number of times  (see, for example, \eqref{EP}).
An $\Sl(2,\mathbb{R})$ algebra as  a symmetry algebra of \eqref{GEP}  with a  basis  given by
\begin{equation}\label{sl2-GEP}
\begin{split}
  & \mathbf{v}_1=\gamma(x)(y_1^2\gen x+y_1y'_1y\gen y), \quad \mathbf{v}_2=\gamma(x)(y_1y_2\gen x+\frac{1}{2}(y_1y_2)'y\gen y), \\
  & \mathbf{v}_3=\gamma(x)(y_2^2\gen x+y_2y'_2y\gen y), \quad \gamma(x)=\exp\left[\int p(x)dx\right].
\end{split}
\end{equation}
is admitted.
Their commutation relations are
\begin{equation}\label{sl2-comm-EP}
 [\mathbf{v}_1,\mathbf{v}_2]=W_0\mathbf{v}_1,  \quad  [\mathbf{v}_1,\mathbf{v}_3]=2W_0\mathbf{v}_2, \quad [\mathbf{v}_2,\mathbf{v}_3]=W_0\mathbf{v}_3.
\end{equation}
The nonzero constant $W_0$ can be put to unity by rescaling the elements  of the algebra.
For further information on the Ermakov--Pinney equation \eqref{GEP} in the special case $p=0$, see Refs. \cite{CarinenaGuengoerTorres2020, GuengoerTorres2018}.

\end{example}

\subsection{Lie's Linearization Theorem}\label{test}
This theorem states that amongst second-order
ordinary differential equations, Eq. $y_2=f(x,y,p)$ with $p=y_1$ is point equivalent to the trivial free-particle  equation $y_2=0$ if and only if the following fourth-order Tresse (absolute) invariants
\cite{MilsonValiquette2015}
\begin{equation}\label{abs-inv}
  I_1=f_{pppp}=0,  \quad I_2=\widehat{D}_x^2f_{pp}-4\widehat{D}_xf_{yp}-f_p \widehat{D}_x f_{pp}+6f_{yy}-3f_yf_{pp}+4f_pf_{yp}=0,
\end{equation}
are identically zero.
Here $\widehat{D}_x=\gen x+p\gen y+f\gen p$ is the truncation of the usual total derivative operator $D_x$ restricted to $y_2=f$. The first condition implies that $f$ should be at most cubic in $p=y_1$.
\begin{example}
Any equation admitting a $A_{2,3}$-type symmetry can be linearized if and only if it has the form
\begin{equation}\label{linear-A23}
  xy_2=ay_1^3+by_1^2+\left(1+\frac{b^2}{3a}\right)y_1+\frac{b(9a+b^2)}{27a^2},
\end{equation}
where $a\ne 0$ and $b$ are arbitrary constants. The necessary and sufficient conditions \eqref{abs-inv} for linerizability are satisfied. We can simplify \eqref{linear-A23} using the equivalence group transformation shifting only the parameter $b$ as $b\to b+\varepsilon$ \cite{Sinkala2020}. This means that we can put $b=0$ by choosing  the group parameter $\varepsilon=-b$.  This transformation is given  by
\begin{equation}\label{b=0}
  \tilde{x}=\sqrt{e^{-b}(1+x^2)-1},  \quad \tilde{y}=e^{-b/2}\left(y+\frac{b}{3a}x\right).
\end{equation}
So  \eqref{linear-A23} has effectively  been  reduced to
\begin{equation}\label{linear-A23-2}
  \tilde{x}\tilde{y}_2=a\tilde{y}_1^3+\tilde{y}_1.
\end{equation}
Moreover, $a$ can be transformed to $a=\epsilon=\pm 1$ by scaling $y$ (or $x$).
Two equivalent equations have isomorphic symmetry groups so that there should be a point transformation mapping the two Lie algebras and Eqs. \eqref{linear-A23-2} and $y_2=0$  into each other. See \cite{Sinkala2020} for the construction of such a map. An easier way of achieving linearization is by using a two-dimensional subalgebra of type $A_{22}$ of \ref{Lie-canonical}  of its entire $\Sl(3,\mathbb{R})$ symmetry algebra given by
$$\mathbf{v}_1=\frac{1}{\tilde{x}}\gen {\tilde{x}},  \quad \mathbf{v}_2=\frac{\tilde{y}}{\tilde{x}}\gen {\tilde{x}}.$$ Suitable canonical coordinates are $r=\tilde{y}$ (an invariant), $s=\tilde{x}^2/2$, which maps the subalgebra to $\curl{\gen s,r\gen s}$ and Eq. \eqref{linear-A23-2} to the linear equation $s''(r)=\tilde{y}_1^{-3}(\tilde{y}_1-\tilde{x} \tilde{y}_2)=-a$. Consequently,  the solution in tilde coordinates  is
$$a \tilde{y}^2+\tilde{x}^2+c_1 \tilde{y}+c_2=0.$$ Reverting  to $(x,y)$ coordinates via \eqref{b=0} and redefining arbitrary constants $c_1, c_2$ leads to the solution of  \eqref{linear-A23}
$$ay^2+\frac{2b}{3}xy+\left(1+\frac{b^2}{9a}\right)x^2+C_1\left(y+\frac{b}{3a}x\right)+C_2=0,$$ which geometrically describes a family of ellipses or hyperbolas  as its discriminant $\Delta=-4a$ varies for $a<0$ or $a>0$, respectively.

Equation of Example \ref{ex-2nd-chain} with two-dimensional symmetry group generated by $\curl{\gen x,x\gen x-y\gen y}$ of the type $A_{23}$ of \ref{Lie-canonical} (modulo a point transformation $(x,y)\to (y^{-1},x+y^{-1})$) belongs to the class \eqref{linear-A23} with $a=-1$, $b=6$ or to \eqref{linear-A23-2} with $a=-1$ and hence  once again its linearizability has been established.
\end{example}

In general, if the second order equation $y_2=f(x,y,p)$ is known to satisfy linearizability conditions \eqref{abs-inv}, it is sufficient to pick a two-dimensional intransitive subalgebra of type $A_{2,2}$ or $A_{2,4}$ (up to change of basis) of the $\Sl(3,\mathbb{R})$ symmetry algebra  to transform to a linear equation.

Linearization criteria \eqref{abs-inv} for the ODE \eqref{lineariz-ode-2} is fulfilled if $g(x,y)$ satisfies the PDE
$$y^2 g_{yy}-\sigma y g_y+\sigma g=0,$$ allowing the general solution $g(x,y)=h(x)y^{\sigma}+f(x)y$ for $\sigma\ne 1$. The arbitrary function $f(x)$ can be put to zero by taking a linear combination with $p(x)$.

For $\sigma=1$, we find $g(x,y)=h(x)y \ln y$, $y>0$, excluding the linear term $f(x)y$. Then, the vector field  $\mathbf{v}(\rho)=\rho(x)y\gen y$, where $\rho$ solves the linear ODE $\rho''+p\rho'-h \rho=0$, is a symmetry because $\pr{2}\mathbf{v}(\mathsf{E})=\rho(x)\mathsf{E}=0$ on $\mathsf{E}=y_2+py'+qy-y^{-1}y_1^2-h(x)y\ln y=0$. It  permits us to find the linearizing transformation $y=e^z$. The linearized ODE is
$$z''+p(x)z_1-h(x)z=-q(x).$$

We comment that there are also similar tests for second order systems of ODEs based on the structure of symmetry algebras \cite{SohMahomed2001}. For systems of PDEs, a general approach to linearizability can be found in the paper \cite{KumeiBluman1982} (see also the book  \cite{BlumanCheviakovAnco2010}).

\begin{example}
In this example,  we subject the quadratic Li\'{e}nard's equation \eqref{q-lienard}
to the linearization test \eqref{test}.
The first condition $I_1=0$ is identically satisfied. The second invariant condition gives a differential relation between $f$ and $g$ such that $g''+(fg)'=0$, or
\begin{equation}\label{linearity-cond-f-g}
  g'+fg=A,
\end{equation}
where $A$ is the integration constant. This condition can be analysed in two different ways.

1) Given $f(y)$, we must have
$$ g(y)=e^{-\int f(y)dy}[A \int e^{\int f(y)dy}dy+B].$$
By construction, the symmetry algebra of the corresponding ODE is eight-dimensional. It is a straightforward calculation to see that the two-dimensional abelian subalgebra $\lie_2$ spanned by
\begin{equation}\label{abelian-sym-alg}
  \mathbf{v}_{1}(y_1)=\Omega(y)y_{1}\gen y, \quad \mathbf{v}_{2}(y_2)=\Omega(y)y_{2}\gen y,  \quad \Omega'+f\Omega=0,
\end{equation}
where $y_{1},y_2$ form a fundamental set of solutions of the linear equation
\begin{equation}\label{lin-ode-1}
  z''+A z=-B
\end{equation}
is admitted by this ODE.
The fact that the  Lie algebra $\lie_2$ depends on the solutions of a linear ODE suggests that there must be a change of dependent variable $Y=Y(y)$  such that $\Omega Y'=1$ mapping the initial ODE to  the linear ODE
$$Y''+AY=-B,$$
where $Y$ satisfies $Y''-fY'=0$ (compare this mapping with \eqref{change-depend}). As a special case in which $f(y)=1$, $g(y)=A+Be^{-y}$, the following equation is singled out  of the class \eqref{q-lienard}
\begin{equation}\label{special-q-lienard}
  y''+y'^2+A+B e^{-y}=0.
\end{equation}
It is linearized to \eqref{lin-ode-1} by the point transformation $Y=e^{y}$.

2.) One can view the condition \eqref{linearity-cond-f-g} as a finite equation for $f(y)=(A-g')/g$ when $g(y)\ne 0$ is given. In this case, the above symmetry \eqref{abelian-sym-alg} still holds, but \eqref{lin-ode-1} is replaced by
\begin{equation}\label{lin-ode-2}
  z''+Az=0
\end{equation}
and $\Omega, Y$ satisfy
$$\Omega(y)=g\exp\left[-A\int g^{-1}dy\right],  \quad Z'=\Omega^{-1}.$$

\end{example}

\subsection{Concluding remarks}
So far we restricted our attention to local Lie point transformations that leave the system of differential equations invariant. They are usually called classical symmetries of the system.  There are several uses of them in the context of differential equations like  group-invariant solutions for PDEs and integration for ODEs among others. One the important generalizations of classical symmetries is the notion of generalized (or Lie--B\"acklund)  symmetries. They were originally introduced by E. Noether in her celebrated  paper \cite{Noether1918} (see \cite{NoetherTavel2005} for a more recent English translation), which related variational symmetries and (local) conservation laws. They also play a central role in soliton theory of completely integrable nonlinear partial differential equations.  Evolution equations like KdV (Korteweg--de Vries) and KP (Kadomtsev--Petviashvili) are known to admit infinite hierarchies of generalized symmetries and conservation laws.   An approach to classification of integrable systems is to identify systems admitting generalized symmetries up to differential substitutions  preserving the differential structure of the system. The infinitesimal generators of generalized symmetries  depend on derivatives of dependent variables. In the special case of one dependent variable, every first order generalized symmetry  determines a contact symmetry (local diffeomorphisms of the $n$-th order jet space $J^n$, preserving the contact ideal generated by  contact forms of the equation) and vice versa.

Over the last few decades, numerous useful extensions of the classical Lie approach to group-invariant solutions have been developed. Among others, they include  the method of partially invariant solutions \cite{Ovsyannikov1982}, non-classical method, methods of  conditional (point or generalized) \cite{OlverVorob'ev1996},  and of  symmetries of potential  systems \cite{BlumanCheviakovAnco2010}.

\section*{Acknowledgement}
I would like to thank Prof. Willy Hereman  for  testing the symmetry calculations of some  Examples with his Maxima program symmgrp2020 and spotting various misprints.


\end{document}